\newcommand{\Gal}{{\operatorname{Gal}}}
\let\oldeqref\eqref
\RenewDocumentCommand\eqref{s m}{%
  \IfBooleanTF#1%
  {\textup{\tagform@{\ref*{#2}}}}% If a star is seen
  {\oldeqref{#2}}%                 If no star is seen
}
\newcommand{\FE}[1]{{\mathcal {FE}({#1})}}
\newcommand{\SE}[1]{{\mathcal {SE}({#1})}}
\newcommand{\FEbar}[1]{{\overline{\mathcal {FE}({#1})}}}
\newcommand{\FU}[1]{{\mathcal {FU}({#1})}}
\newcommand{\Amp}{{\operatorname{Amp}}}
\newcommand{\bir}{{\operatorname{bir}}}
\newcommand{\Movint}{{\operatorname{Mov}^\circ}}
\newcommand{\BAmp}{{\operatorname{BAmp}}}
\newcommand{\BNef}{{\operatorname{BNef}}}
\newcommand{\Pos}{{\operatorname{Pos}}}
\renewcommand{\Big}{{\operatorname{Big}}}
\newcommand{\Bir}{{\operatorname{Bir}}}
\newcommand{\Mov}{{\operatorname{Mov}}}
\newcommand{\Movbar}{{\ol{\operatorname{Mov}}}}
\newcommand{\Eff}{{\operatorname{Eff}}}
\newcommand{\Exc}{{\operatorname{Exc}}}
\newcommand{\Mth}{{\operatorname{Mon}^{2,\lt}_{\operatorname{Hdg}}}}
\newcommand{\Mtb}{{\operatorname{Mon}^{2,\lt}_{\operatorname{bir}}}}
\newcommand{\Mt}{{\operatorname{Mon}^{2,\lt}}}
\newcommand{\tf}{{\mathrm{tf}}}
\newcommand{\sB}{{\mathcal B}}
\newcommand{\sO}{{\mathcal O}}
\newcommand{\sX}{{\mathcal X}}
\newcommand{\scrX}{{\mathscr X}}
\newcommand{\C}{{\mathbb C}}
\newcommand{\Q}{{\mathbb Q}}
\newcommand{\R}{{\mathbb R}}
\newcommand{\U}{{\mathbb U}}
\newcommand{\V}{{\mathbb V}}
\newcommand{\Z}{{\mathbb Z}}
\newcommand{\Aut}{\operatorname{Aut}}
\newcommand{\codim}{\operatorname{codim}}
\newcommand{\Def}{{\operatorname{Def}}}
\newcommand{\GL}{\operatorname{GL}}
\newcommand{\Hdg}{\operatorname{Hdg}}
\newcommand{\id}{{\rm id}}
\newcommand{\isom}{{\ \cong\ }}
\newcommand{\lt}{{\rm{lt}}}
\newcommand{\Nef}{{\operatorname{Nef}}}
\renewcommand{\O}{{\rm O}}
\newcommand{\ol}[1]{{\overline{#1}}}
\newcommand{\Pic}{\operatorname{Pic}}
\newcommand{\ratl}{\dashrightarrow}
\newcommand{\reg}{{\operatorname{reg}}}
\newcommand{\sing}{{\operatorname{sing}}}
\renewcommand{\to}[1][]{\xrightarrow{\ #1\ }}
\newcommand{\tensor}{\otimes}
\newcommand{\wt}[1]{{\widetilde{#1}}}
\newcommand*{\da@rightarrow}{\mathchar"0\hexnumber@\symAMSa 4B }
\newcommand*{\da@leftarrow}{\mathchar"0\hexnumber@\symAMSa 4C }
\newcommand*{\xdashrightarrow}[2][]{%
  \mathrel{%
    \mathpalette{\da@xarrow{#1}{#2}{}\da@rightarrow{\,}{}}{}%
  }%
}
\newcommand{\xdashleftarrow}[2][]{%
  \mathrel{%
    \mathpalette{\da@xarrow{#1}{#2}\da@leftarrow{}{}{\,}}{}%
  }%
}
\newcommand*{\da@xarrow}[7]{%
  % #1: below
  % #2: above
  % #3: arrow left
  % #4: arrow right
  % #5: space left 
  % #6: space right
  % #7: math style 
  \sbox0{$\ifx#7\scriptstyle\scriptscriptstyle\else\scriptstyle\fi#5#1#6\m@th$}%
  \sbox2{$\ifx#7\scriptstyle\scriptscriptstyle\else\scriptstyle\fi#5#2#6\m@th$}%
  \sbox4{$#7\dabar@\m@th$}%
  \dimen@=\wd0 %
  \ifdim\wd2 >\dimen@
    \dimen@=\wd2 %   
  \fi
  \count@=2 %
  \def\da@bars{\dabar@\dabar@}%
  \@whiledim\count@\wd4<\dimen@\do{%
    \advance\count@\@ne
    \expandafter\def\expandafter\da@bars\expandafter{%
      \da@bars
      \dabar@ 
    }%
  }%  
  \mathrel{#3}%
  \mathrel{%   
    \mathop{\da@bars}\limits
    \ifx\\#1\\%
    \else
      _{\copy0}%
    \fi
    \ifx\\#2\\%
    \else
      ^{\copy2}%
    \fi
  }%   
  \mathrel{#4}%
}
\newtheoremstyle{citing}% name
  {}%      Space above, empty = `usual value'
  {}%      Space below
  {\itshape}% Body font
  {}%         Indent amount (empty = no indent, \parindent = para indent)
  {\bfseries}% Thm head font
  {\textbf{.}}%        Punctuation after thm head
  {.5em}%     Space after thm head: " " = normal interword space;
\theoremstyle{plain}
\newtheorem{theorem}[subsection]{Theorem}
\newtheorem{lemma}[subsection]{Lemma}
\newtheorem{corollary}[subsection]{Corollary}
\newtheorem{proposition}[subsection]{Proposition}
\theoremstyle{remark}
\newtheorem{example}[subsection]{Example}
\theoremstyle{definition}
\newtheorem{conjecture}[subsection]{Conjecture}
\newtheorem{definition}[subsection]{Definition}
\numberwithin{equation}{section}
\theoremstyle{remark}
\newtheorem{remark}[subsection]{Remark}
\theoremstyle{citing}
\newsavebox\myboxA
\newsavebox\myboxB
\newlength\mylenA
\newcommand*\xtilde[2][0.8]{%
    \sbox{\myboxA}{$\m@th#2$}%
    \setbox\myboxB\null% Phantom box
    \ht\myboxB=\ht\myboxA%
    \dp\myboxB=\dp\myboxA%
    \wd\myboxB=#1\wd\myboxA% Scale phantom
    \sbox\myboxB{$\m@th\widetilde{\copy\myboxB}$}%  Overlined phantom
    \setlength\mylenA{\the\wd\myboxA}%   calc width diff
    \addtolength\mylenA{-\the\wd\myboxB}%
    \ifdim\wd\myboxB<\wd\myboxA%
       \rlap{\hskip 0.5\mylenA\usebox\myboxB}{\usebox\myboxA}%
    \else
        \hskip -0.5\mylenA\rlap{\usebox\myboxA}{\hskip 0.5\mylenA\usebox\myboxB}%
    \fi}
\newbox\usefulbox
\def\getslant #1{\strip@pt\fontdimen1 #1}
\def\xxtilde #1{\mathchoice
 {{\setbox\usefulbox=\hbox{$\m@th\displaystyle #1$}%
    \dimen@ \getslant\the\textfont\symletters \ht\usefulbox
    \divide\dimen@ \tw@ 
    \kern\dimen@ 
    \xtilde{\kern-\dimen@ \box\usefulbox\kern\dimen@ }\kern-\dimen@ }}
 {{\setbox\usefulbox=\hbox{$\m@th\textstyle #1$}%
    \dimen@ \getslant\the\textfont\symletters \ht\usefulbox
    \divide\dimen@ \tw@ 
    \kern\dimen@ 
    \xtilde{\kern-\dimen@ \box\usefulbox\kern\dimen@ }\kern-\dimen@ }}
 {{\setbox\usefulbox=\hbox{$\m@th\scriptstyle #1$}%
    \dimen@ \getslant\the\scriptfont\symletters \ht\usefulbox
    \divide\dimen@ \tw@ 
    \kern\dimen@ 
    \xtilde{\kern-\dimen@ \box\usefulbox\kern\dimen@ }\kern-\dimen@ }}
 {{\setbox\usefulbox=\hbox{$\m@th\scriptscriptstyle #1$}%
    \dimen@ \getslant\the\scriptscriptfont\symletters \ht\usefulbox
    \divide\dimen@ \tw@ 
    \kern\dimen@ 
    \xtilde{\kern-\dimen@ \box\usefulbox\kern\dimen@ }\kern-\dimen@ }}%
 {}}
\newcommand*\xoverline[2][0.75]{%
    \sbox{\myboxA}{$\m@th#2$}%
    \setbox\myboxB\null% Phantom box
    \ht\myboxB=\ht\myboxA%
    \dp\myboxB=\dp\myboxA%
    \wd\myboxB=#1\wd\myboxA% Scale phantom
    \sbox\myboxB{$\m@th\overline{\copy\myboxB}$}%  Overlined phantom
    \setlength\mylenA{\the\wd\myboxA}%   calc width diff
    \addtolength\mylenA{-\the\wd\myboxB}%
    \ifdim\wd\myboxB<\wd\myboxA%
       \rlap{\hskip 0.5\mylenA\usebox\myboxB}{\usebox\myboxA}%
    \else
        \hskip -0.5\mylenA\rlap{\usebox\myboxA}{\hskip 0.5\mylenA\usebox\myboxB}%
    \fi}
\def\xxoverline #1{\mathchoice
 {{\setbox\usefulbox=\hbox{$\m@th\displaystyle #1$}%
    \dimen@ \getslant\the\textfont\symletters \ht\usefulbox
    \divide\dimen@ \tw@ 
    \kern\dimen@ 
    \overline{\kern-\dimen@ \box\usefulbox\kern\dimen@ }\kern-\dimen@ }}
 {{\setbox\usefulbox=\hbox{$\m@th\textstyle #1$}%
    \dimen@ \getslant\the\textfont\symletters \ht\usefulbox
    \divide\dimen@ \tw@ 
    \kern\dimen@ 
    \xoverline{\kern-\dimen@ \box\usefulbox\kern\dimen@ }\kern-\dimen@ }}
 {{\setbox\usefulbox=\hbox{$\m@th\scriptstyle #1$}%
    \dimen@ \getslant\the\scriptfont\symletters \ht\usefulbox
    \divide\dimen@ \tw@ 
    \kern\dimen@ 
    \xoverline{\kern-\dimen@ \box\usefulbox\kern\dimen@ }\kern-\dimen@ }}
 {{\setbox\usefulbox=\hbox{$\m@th\scriptscriptstyle #1$}%
    \dimen@ \getslant\the\scriptscriptfont\symletters \ht\usefulbox
    \divide\dimen@ \tw@ 
    \kern\dimen@ 
    \xoverline{\kern-\dimen@ \box\usefulbox\kern\dimen@ }\kern-\dimen@ }}%
 {}}
\theoremstyle{definition}
\begin{document}
\thispagestyle{empty}

\begin{abstract}
We prove the Morrison--Kawamata cone conjecture for projective primitive symplectic varieties with $\Q$-factorial and terminal singularities with $b_2\geq 5$, from which we derive for instance the finiteness of  minimal models of such varieties, up to isomorphisms. To prove the conjecture we establish along the way some results on the monodromy group which may be interesting in their own right, such as the fact that reflections in prime exceptional divisors are integral Hodge monodromy operators which, together with monodromy operators provided by birational transformations, yield a semidirect product decomposition of the monodromy group of Hodge isometries.
\end{abstract}

\makeatletter
\@namedef{subjclassname@2020}{
	\textup{2020} Mathematics Subject Classification}
\makeatother

\subjclass[2020]{14J50, 14E07 (primary), 32J27, 51F15, 53C26, 14B07 (secondary).} 
\keywords{hyperk\"ahler manifold, primitive symplectic variety, cone conjecture, prime exceptional divisors, minimal models}

\maketitle

\setlength{\parindent}{1em}
\setcounter{tocdepth}{1}

% Primary:
% 14J50   	Automorphisms of surfaces and higher-dimensional varieties
% 14E07   	Birational automorphisms, Cremona group and generalizations

% Secondary: 
% 32J27   	Compact Kähler manifolds: generalizations, classification
% 51F15   	Reflection groups, reflection geometries
% 53C26   	Hyper-Kähler and quaternionic Kähler geometry, "special'' geometry
% 14B07   	Deformations of singularities

\tableofcontents

%-------------------------------------------
%-------------------------------------------
%-------------------------------------------
\section{Introduction}\label{section intro}
%-------------------------------------------
%-------------------------------------------
%-------------------------------------------
\thispagestyle{empty}

The celebrated Cone Theorem says that for e.g. a smooth, projective variety $X$ the $K_X$-negative part of the cone of curves of $X$ is
locally rational polyhedral and extremal $K_X$-negative rays yield contraction morphisms (see e.g. \cite[Theorem~3.7]{KM98}). Getting information about the part of the cone where $K_X$ is nonnegative is much more complicated. For instance $K3$ surfaces may have round cone of curves or contain infinitely many $(-2)$-curves (see \cite{totaro-sur}). 
The Morrison--Kawamata cone conjecture provides a beautiful conjectural explanation for those apparent pathologies in terms of the action of the automorphism group. 

Morrison \cite{Mor93,Mor94} gave the original statement of the cone conjecture for Calabi–Yau threefolds. The statement was then generalised by Kawamata in \cite{Kawa97} to families of varieties with numerically trivial canonical bundle, and from there to the so-called klt Calabi–Yau pairs by Totaro, cf. \cite{totaro}. 
We state it here without boundary (cf. \cite[Conjecture 1.12]{Kawa97} and \cite[Conjecture 1.1]{totaro} for pairs).  Recall that a {\it $K$-trivial fiber space} is a  proper surjective morphism $f:X\to S$ with connected fibers between normal varieties such that $X$ has $\Q$-factorial and terminal singularities and $K_X$ is zero in $N^1(X/S)$. 

\begin{conjecture}[The Morrison-Kawamata Cone conjecture]\label{conj:cone}
Let $X\to S$ be a $K$-trivial fiber space.
\begin{enumerate}
\item[(1)] 
There exists a rational polyhedral cone $\Pi$ which is a fundamental domain for the
action of $\Aut(X/S)$ on $\Nef^e(X/S):=\Nef(X/S)\cap \Eff(X/S)$ in the sense that
\begin{enumerate}
\item[(a)] $\Nef^e(X/S)= \cup_{g\in \Aut(X/S)} g^*\Pi$.
\item[(b)] $int (\Pi) \cap int (g^*\Pi) = \emptyset$, unless $g^* = id$ in $\GL(N^1(X/S))$.
\end{enumerate}
\item[(2)] 
There exists a rational polyhedral cone $\Pi'$ which is a fundamental domain in the sense above for the
action of $\Bir(X/S)$ on $\overline{\Mov}^e(X/S):=\overline{\Mov}(X/S)\cap \Eff(X/S)$. 
\end{enumerate}
\end{conjecture}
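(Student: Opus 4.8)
The plan is to transpose to the singular setting the proof of the cone conjecture for hyperk\"ahler manifolds (Markman; Amerik--Verbitsky; Markman--Yoshioka), the lattice-theoretic engine being Looijenga's theorem on arithmetic groups acting on convex cones of finite type. First I would dispose of the case $\dim S>0$, where $f$ is, up to base change, a Lagrangian fibration and the relevant part of $N^1(X/S)$ carries a degenerate or negative-definite form, so that $(1)$ and $(2)$ reduce to the structure theory of Lagrangian fibrations; from now on assume $S=\pt$. Then $X$ is a projective primitive symplectic variety with $\Q$-factorial terminal singularities and $b_2\geq 5$, and the Picard lattice $(\Pic(X),q)$ equipped with the Beauville--Bogomolov--Fujiki form is hyperbolic, with positive cone $\Pos(X)\subset\Pic(X)_\R$ the connected component of $\{q>0\}$ meeting the ample cone.

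The technical core is the monodromy package announced in the abstract. I would first show that every prime exceptional divisor $D$ satisfies $q(D)<0$, so that the reflection $R_D$ of $\Pic(X)$ in $D^{\perp}$ is a well-defined isometry, and then prove the crucial fact that $R_D$ lies in the integral locally trivial Hodge monodromy group $\Mth(X)$. I expect this to be the hardest point: it should require a deformation argument inside the locally trivial deformation space of $X$ --- degenerating so that $D$ becomes contractible, or crossing a divisorial wall, and realizing $R_D$ as parallel transport around the resulting locus --- which in turn demands good control on contractions and on the period map for singular symplectic varieties. Granting this, set $W_{\exc}=\langle R_D : D \text{ prime exceptional}\rangle\subseteq\Mth(X)$; the next step is to prove that $W_{\exc}$ is normal and admits a complement, yielding $\Mth(X)=W_{\exc}\rtimes\Mtb(X)$ with $\Mtb(X)$ the subgroup coming from birational transformations. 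In parallel I would set up the geometry: the hyperplanes $D^{\perp}$ are locally finite in $\Pos(X)$ (a boundedness statement for prime exceptional classes), the closure $\overline{\Mov}(X)\cap\Pos(X)$ is a fundamental chamber for the action of $W_{\exc}$ on $\Pos(X)$ (the analogue of Markman's description of the movable cone), and inside $\overline{\Mov}(X)$ the nef cones $g^{*}\Nef(X')$ of the marked birational models $g\colon X\dashrightarrow X'$ form a second, locally finite, $\Bir(X)$-equivariant chamber decomposition.

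Now the arithmetic step. I would prove that $\Mth(X)$ has finite index in $\O(\Pic(X))$ --- this is exactly where $b_2\geq 5$ is used, through the Torelli and monodromy theory for primitive symplectic varieties --- and then invoke Looijenga's theorem: a finite-index subgroup of the orthogonal group of a hyperbolic lattice admits a rational polyhedral fundamental domain on the convex hull $\Pos(X)^{+}$ of the rational points of $\overline{\Pos}(X)$. Combining this with the semidirect decomposition --- cutting the fundamental domain of $\Mth(X)$ along the $W_{\exc}$-walls and reassembling the finitely many pieces inside the movable cone --- produces a rational polyhedral fundamental domain for $\Mtb(X)$ acting on $\overline{\Mov}(X)\cap\Pos(X)^{+}$. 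To conclude $(2)$ it then remains to identify $\overline{\Mov}(X)\cap\Pos(X)^{+}$ with $\overline{\Mov}^{e}(X)=\overline{\Mov}(X)\cap\Eff(X)$, using that the interior of $\overline{\Mov}(X)$ consists of big classes together with a divisorial-Zariski-decomposition analysis of the boundary, and to check that the natural homomorphism $\Bir(X)\to\O(\Pic(X))$ has finite kernel and image $\Mtb(X)$ (the latter being a birational Torelli statement); then $\Bir(X)$ inherits the desired rational polyhedral fundamental domain on $\overline{\Mov}^{e}(X)$.

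Finally $(1)$: the nef cone $\Nef(X)$ is one chamber of the second decomposition above, and $\Aut(X)$ is its stabilizer in $\Bir(X)$. A rational polyhedral fundamental domain $\Pi'$ for $\Bir(X)$ on $\overline{\Mov}^{e}(X)$ meets only finitely many chambers by local finiteness; hence there are only finitely many marked birational models modulo $\Bir(X)$ --- giving finiteness of minimal models up to isomorphism --- and a rational polyhedral fundamental domain for $\Aut(X)$ on $\Nef^{e}(X)$ is obtained by intersecting $\Nef^{e}(X)$ with the union of the finitely many $\Bir(X)$-translates of $\Pi'$ that it meets (absorbing, if necessary, the residual finite-group action into a single rational polyhedral cone). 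Besides the integrality of $R_D$ and the finite-index property of $\Mth(X)$, which I regard as the two principal obstacles, the remaining delicate point is the boundary identification $\overline{\Mov}^{e}(X)=\overline{\Mov}(X)\cap\Pos(X)^{+}$, as it requires Boucksom-type results on divisorial Zariski decompositions in the singular primitive symplectic setting.
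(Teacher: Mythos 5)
Your proposal does not prove the statement as written. The statement is the full Morrison--Kawamata conjecture for arbitrary $K$-trivial fiber spaces, and your two opening reductions are not valid: the case $\dim S>0$ does not ``reduce to the structure theory of Lagrangian fibrations'' (the conjecture is open for general $K$-trivial fiber spaces over a positive-dimensional base, apart from low-dimensional results of Kawamata), and for $S=\pt$ it is simply false that $X$ must be a projective primitive symplectic variety with $b_2\geq 5$ --- $X$ could be Calabi--Yau, abelian, symplectic with small $b_2$, etc. The paper itself only establishes the conjecture for projective primitive symplectic varieties with $\Q$-factorial terminal singularities and $b_2\geq 5$ (Theorem \ref{thm:cone}); in that restricted setting your outline does follow essentially the same route: reflections in prime exceptional divisors are integral Hodge monodromy operators, the semidirect decomposition $\Mth(X)=W_X\rtimes\Mtb(X)$, finite index of the monodromy in the orthogonal group of the Picard lattice, a Dirichlet/Looijenga-type fundamental domain for the birational action on $\Movbar^+(X)$, and then descent to $\Nef^+(X)$.

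Within that setting there is still one genuine gap in your passage from item (2) to item (1). You assert that the nef cones of birational models form a ``locally finite'' chamber decomposition of the movable cone and that the rational polyhedral domain $\Pi'$ ``meets only finitely many chambers by local finiteness.'' This is exactly the nontrivial point, and it is not covered by the boundedness you do list (which concerns prime exceptional classes, i.e.\ the walls of the $W_X$-chambers, not the walls inside the movable cone). The walls cutting out the nef chambers are orthogonal to \emph{wall divisors} (duals of extremal curve classes), and to ensure that a rational polyhedral cone --- which may touch the boundary of the positive cone, where hyperplanes can accumulate --- meets only finitely many of them, one needs a uniform lower bound on the squares of primitive wall divisors. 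In the paper this is Proposition \ref{prop:bounded_wall}, whose proof requires the deformation invariance of wall divisors (Proposition \ref{prop:walls_deform}, itself resting on Corollary \ref{corollary curve deforms} and the base-point-free theorem) and the Amerik--Verbitsky density theorem \ref{thm:orbitdense}; only then does the finite subdivision \eqref{eq:finitesub} of $\Pi$ and the reassembly into $\Aut(X)$-translates go through. Your sketch neither identifies this boundedness as a difficulty (you list only the integrality of $R_D$, the finite-index property, and the boundary identification $\Movbar^e=\Movbar^+$) nor indicates any mechanism to prove it, so the final step of (1) is unsupported as it stands.
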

Item (2) above is also known as the birational Cone conjecture. 

The conjecture was proved for $K3$ surfaces by Sterk \cite{sterk},  checked for the other $K$-trivial smooth surfaces by Yu. Namikawa in \cite{Nami85} (see also \cite{Kawa97}) and more recently extended to two-dimensional klt pairs by Totaro \cite{totaro}.   
The conjecture has been proved for abelian varieties by Prendergast-Smith \cite{PS12}. Note that in this case the birational and the automorphism version coincide. Kawamata proved the cone conjecture for all 3-dimensional $K$-trivial fiber
spaces over a positive-dimensional base \cite{Kawa97}.  For Calabi--Yau 3-folds, there are results by Oguiso \cite{oguiso2001finiteness}, Szendr\"oi \cite{szendroi1999},
Uehara \cite{Ue04} and Wilson \cite{wilson}, but we are far from a complete solution, see \cite{LOP18} for an account and for further references therein.

For irreducible holomorphic symplectic manifolds, one often considers the analogous conjecture where $\Nef^e(X)$ is replaced by the convex hull $\Nef^+(X)$ of $\Nef(X)\cap \Pic(X)_{\Q}$ inside $\Pic(X)_{\R}$ (and similarly for the convex hull $\overline{\Mov}^+(X)$ of $\overline{\Mov}(X)\cap \Pic(X)_{\Q}$). It is easy to see that $\Nef^e(X)\subset \Nef^+(X)$ (see \cite[Remark~1.4]{MY15}), the other inclusion follows from the SYZ-conjecture and thus holds for all known examples. Therefore, up to the validity of the SYZ-conjecture this is just a different formulation. 
The cone conjecture has been solved completely for irreducible holomorphic symplectic manifolds, due to work of Markman \cite{Mar11}, Markman--Yoshioka \cite{MY15} and Amerik--Verbitsky \cite{AV15, AV17, AV20}, following a strategy that can be traced back to Sterk \cite{sterk}. 

In this paper we concentrate on singular holomorphic symplectic varieties, building upon the above works and pursuing, as in the smooth case, Sterk's strategy.

There are multiple reasons to study the cone conjecture for singular holomorphic symplectic varieties. First of all, the latter are together with complex tori and singular Calabi--Yau varieties the building blocks of ``mildly singular’’ varieties with trivial first Chern class (see \cite{GKKP11, DG18, Dru18, GGK19, HP19, BGL20}), which are themselves the conjectural output of the MMP for smooth projective varieties with zero Kodaira dimension. 

Secondly, for almost all choices of Mukai vectors, moduli spaces of sheaves with that fixed Mukai vector on symplectic surfaces are singular and give rise to singular holomorphic symplectic varieties (see \cite{peregorapagnetta} for all the relevant definitions and Theorem~1.19 therein for the result).

Finally, the cone conjecture is naturally related to the problem of finiteness of (marked) minimal models, see e.g. \cite[Corollary 1.5]{MY15}. Recall that a \emph{minimal model} of a variety $X$ is a variety $X'$ with $\Q$-factorial terminal singularities, birational to $X$, and such that the canonical sheaf $\omega_{X'}$ is nef. A \emph{marked minimal model} of $X$ is a minimal model $X'$ together with a birational map $\phi:X \ratl X'$. For both there is an obvious notion of isomorphism.

Finiteness of  minimal models is known to hold for varieties of general type, cf. \cite[Corollary 1.1.5]{BCHM10}, while a variety of non-maximal Kodaira dimension can have infinitely many  minimal models (see \cite[Section~6.8]{Rei83}, \cite[Example~3.8]{Kawa97}). Yet marked minimal models are conjectured to be finite. The MMP is conceived to produce minimal models — these are mostly singular, hence, if adjunction holds, minimal models should be fibered $f:X\to S$ in singular $K$-trivial varieties $X_s$. Therefore, the cone conjecture for possibly singular $K$-trivial fiber spaces $f:X\to S$ together with the abundance conjecture could lead to the finiteness of minimal models up to isomorphisms for arbitrary varieties, as it is the case for projective primitive symplectic varieties, see Corollary~\ref{cor:finite} below.

Our main result verifies the conjecture for projective primitive symplectic varieties  (cf. Definition \ref{definition primitive symplectic}), which form the largest class of singular holomophic symplectic varieties for which we have a theory similar to the one in the smooth case at our disposal.

\begin{theorem}\label{thm:cone}
Let $X$ be a projective primitive symplectic variety with $\Q$-factorial terminal singularities and assume that $b_2(X)\geq 5$.
\begin{enumerate}
    \item The cone conjecture \ref{conj:cone}, item (1),  for the $\Aut(X)$-action on $\Nef^+(X)$ holds for $X$.

    \item The birational cone conjecture \ref{conj:cone}, item (2),  for the $\Bir(X)$-action on $\overline{\Mov}^+(X)$ holds for $X$.
\end{enumerate}

\end{theorem}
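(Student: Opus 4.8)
The plan is to run Sterk's strategy, following the treatment of the smooth case by Markman, Markman--Yoshioka and Amerik--Verbitsky, reducing Theorem~\ref{thm:cone} to three ingredients: a global Torelli theorem for primitive symplectic varieties, structural results on the locally trivial monodromy group, and Looijenga's theorem on arithmetic group actions on hyperbolic cones. Throughout one works with the Beauville--Bogomolov--Fujiki form $q$ on $H^2(X,\Z)$; its restriction to $\NS(X)$ has signature $(1,\rho(X)-1)$, and the ambient object is the positive cone $\Pos(X)\subset\NS(X)_\R$ together with its rational closure $\Pos^+(X)$, the convex hull of the rational points of $\Pos(X)$. The hypothesis $b_2(X)\geq 5$ enters only through the Bakker--Lehn global Torelli theorem and the surjectivity of the locally trivial period map, on which the first step rests. \emph{Step 1: reduction to monodromy.} Since $X$ has terminal singularities and $K_X\numeq 0$, every birational self-map of $X$ is an isomorphism in codimension one, hence acts on $H^2(X,\Z)$ by a Hodge isometry; restricting to $\NS(X)$ one gets homomorphisms $\Bir(X)\to\Mpb$ and $\Aut(X)\to\Stab_{\Mpb}(\Nef^+(X))$, and the Torelli theorem shows these have finite kernel and image of finite index. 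As the existence of a rational polyhedral fundamental domain is insensitive to finite kernels and to passing to a finite-index subgroup, it suffices to prove that $\Mpb$ acts on $\Movbar^+(X)$ with a rational polyhedral fundamental domain and that $\Stab_{\Mpb}(\Nef^+(X))$ acts on $\Nef^+(X)$ with one.

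\emph{Step 2: the movable cone (item (2)).} The key input is the monodromy statement announced in the abstract, which I would establish by transplanting the arguments of Markman and Amerik--Verbitsky to the locally trivial deformation theory of $X$: for every prime exceptional divisor $D$ on any primitive symplectic variety obtained from $X$ by a locally trivial deformation, the reflection $R_D$ (for $D$ of this type lying in $\NS(X)$) is an integral Hodge monodromy operator. Combined with a divisorial Zariski decomposition argument, this yields that the hyperplanes $D^\perp$, with $D$ ranging over prime exceptional classes in $\NS(X)$, form a $\Mp$-invariant, locally finite arrangement in $\Pos^+(X)$ --- local finiteness because integrality of $R_D$ bounds $|q(D)|$ in terms of the discriminant of $\NS(X)$ --- whose chambers are permuted by $\Mp$, with $\Movbar^+(X)$ the closure of the chamber containing the ample cone and with $\Mpb$ its stabilizer; this is exactly the semidirect product decomposition $\Mp=W_{\exc}(X)\rtimes\Mpb$, where $W_{\exc}(X)$ is the group generated by the reflections $R_D$. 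Since $\Mp$ has finite index in $\O(\NS(X))$, Looijenga's theorem endows it with a rational polyhedral fundamental domain on $\Pos^+(X)$; the version of Looijenga's theorem for invariant locally finite polyhedral decompositions (as used by Amerik--Verbitsky in the smooth case) then produces a rational polyhedral fundamental domain for the stabilizer $\Mpb$ on the chamber $\Movbar^+(X)$. This establishes item (2).

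\emph{Step 3: the nef cone (item (1)).} Working now inside $\Movbar^+(X)$: the nef cones $\Nef(X')$ of the $\Q$-factorial terminal primitive symplectic varieties $X'$ birational to $X$, equivalently the ample chambers produced by the MMP, are locally rational polyhedral and form a $\Mpb$-invariant, locally finite chamber decomposition of the interior of $\Movbar^+(X)$; this rests on the MMP for primitive symplectic varieties together with the boundedness of the relevant wall classes (a ``bounded BBF-square'' statement for MBM-type classes) ensuring local finiteness, and on the fact that $\Stab_{\Mpb}(\Nef^+(X))$, the stabilizer of the chamber $\Nef^+(X)$, is precisely the group attached to $\Aut(X)$ in Step 1. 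Applying the chamber version of Looijenga's theorem once more, now to $\Mpb$ acting on $\Movbar^+(X)$ with the rational polyhedral fundamental domain supplied by Step 2 and with this chamber decomposition, yields a rational polyhedral fundamental domain for $\Stab_{\Mpb}(\Nef^+(X))$ on $\Nef^+(X)$. Together with Step 1 this gives item (1), completing the proof.

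\emph{Expected main obstacle.} The genuinely new part, and the hardest, is the monodromy input to Step 2: showing that reflections in prime exceptional divisors are \emph{integral} locally trivial Hodge monodromy operators (and that the birational monodromy forms a complement). In the smooth case this relies on the deformation theory of the pair consisting of a manifold together with its exceptional divisor, and on spreading a divisorial contraction out over a locally trivial, twistor-type family; carrying this over to $\Q$-factorial terminal primitive symplectic varieties forces one to control the singularities of $X$ and of the exceptional locus along such families, which is where both the terminality and the $\Q$-factoriality hypotheses are used in an essential way. The remaining difficulties --- local finiteness of the two wall-and-chamber structures (needing boundedness and a robust MMP in this setting) and the correct invocation of Looijenga's chamber lemma so that the fundamental domains come out as genuine rational polyhedral cones --- are more technical but should follow the smooth template closely.
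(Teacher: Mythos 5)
Your proposal is correct in outline and follows essentially the same route as the paper: the global Torelli theorem of Bakker--Lehn, the theorem that reflections in prime exceptional divisors are integral Hodge monodromy operators together with the semidirect product decomposition of the monodromy group, a Sterk/Looijenga-type Dirichlet-domain argument for the movable cone, and then boundedness of wall-divisor squares (proved, as you anticipated, via Amerik--Verbitsky's density theorem) to subdivide that fundamental domain and descend to the nef cone à la Markman--Yoshioka. The only cosmetic differences are that the paper establishes the exceptional chamber structure via Vinberg--Shvartsman's theory of hyperbolic reflection groups rather than your boundedness remark, and handles item (1) by a finite subdivision of the birational fundamental domain plus the Markman--Yoshioka/Totaro/Looijenga lemma rather than a direct chamber version of Looijenga's theorem.
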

As in \cite{totaro}, \cite[Corollary 1.5]{MY15} and \cite[Theorem 1.3]{Ogui14}  we deduce the following consequences. 
\begin{corollary}\label{cor:finite-cont}
Let $X$ be a projective primitive symplectic variety with $\Q$-factorial terminal singularities and assume that $b_2(X)\geq 5$. Then the number of contractions of $X$ is finite, up to automorphisms. In particular the number of Lagrangian fibrations on $X$ is finite, up to automorphisms.
\end{corollary}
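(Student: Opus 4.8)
The plan is to deduce the finiteness of contractions from the cone conjecture in Theorem~\ref{thm:cone} by the standard dictionary between contractions of $X$ and rational polyhedral faces of $\Nef^+(X)$, following the arguments of \cite{totaro}, \cite[Corollary 1.5]{MY15} and \cite[Theorem 1.3]{Ogui14}. First I would recall that a contraction $f\colon X\to Y$ is determined, up to isomorphism over $X$, by its associated face $F_f := f^*\Nef(Y) \subseteq \Nef(X)$, equivalently by the supporting class (the relative interior of $F_f$ lies in the nef cone and its span is cut out by the curves contracted by $f$); two contractions are isomorphic precisely when the corresponding faces coincide. Since $X$ has $\Q$-factorial terminal singularities, such a face is automatically rational polyhedral when $f$ is a genuine morphism (the contracted curve classes span a rational subspace, and relative Kleiman/Cone-theorem considerations apply), so every contraction of $X$ corresponds to a rational polyhedral face of $\Nef^+(X)$, and $\Aut(X)$ acts on the set of such faces compatibly with the isomorphism relation on contractions.

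Next I would invoke Theorem~\ref{thm:cone}(1): there is a rational polyhedral fundamental domain $\Pi$ for the $\Aut(X)$-action on $\Nef^+(X)$ with $\Nef^+(X) = \bigcup_{g\in\Aut(X)} g^*\Pi$. Given any contraction $f$ with face $F_f$, pick a class $\alpha$ in the relative interior of $F_f$; then some $g\in\Aut(X)$ sends $\alpha$ into $\Pi$, so $g^*F_f$ is a face meeting $\Pi$ (more precisely meeting $\Pi$ in its relative interior, after a short argument that the fundamental-domain property respects faces — this is where one uses that $\Pi$ is rational polyhedral and the overlaps of translates are on the boundary). Hence every $\Aut(X)$-orbit of contractions has a representative whose face is one of the finitely many faces of the rational polyhedral cone $\Pi$. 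Since $\Pi$ has only finitely many faces, there are only finitely many orbits, which is the claim. For the statement about Lagrangian fibrations, I would note that any Lagrangian fibration $X\to B$ is a contraction (its pullback of an ample class on $B$ is a nef, non-big, primitive isotropic class on the boundary of $\Nef^+(X)$, using the Beauville--Bogomolov--Fujiki form which exists on primitive symplectic varieties), so finiteness of Lagrangian fibrations up to automorphism follows immediately from finiteness of contractions up to automorphism.

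The main obstacle, and the point requiring the most care, is the bookkeeping that identifies contractions with rational polyhedral faces and shows this identification is $\Aut(X)$-equivariant and injective on isomorphism classes: one must check that distinct (non-isomorphic) contractions really do give distinct faces, that the faces arising are rational polyhedral even though $\Nef^+(X)$ itself need not be, and that a face meeting the interior of some translate $g^*\Pi$ is $\Aut(X)$-conjugate to a face of $\Pi$. In the smooth hyperkähler setting this is handled in \cite[Corollary 1.5]{MY15} and \cite[Theorem 1.3]{Ogui14}; the only genuinely new input needed here is that the relevant statements about faces of the nef cone and contractions go through for projective primitive symplectic varieties with $\Q$-factorial terminal singularities, which is ensured by the theory developed earlier in the paper (the existence of the Beauville--Bogomolov--Fujiki form, $\Q$-factoriality, and the Cone/Contraction theorem for such mildly singular varieties). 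Everything else is a direct consequence of Theorem~\ref{thm:cone}(1) together with these structural facts.
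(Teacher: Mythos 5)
Your overall route (the cone conjecture plus the dictionary between contractions and faces, plus finiteness of faces of the fundamental domain) is exactly the one the paper intends: its proof of Corollary~\ref{cor:finite-cont} is literally ``argue as in the Introduction of \cite{totaro}, using Theorem~\ref{thm:cone}, item (1)''. However, two of your bridging claims fail as stated. First, the face $F_f=f^*\Nef(Y)$ is in general \emph{not} rational polyhedral: for the identity contraction it is all of $\Nef(X)$, and for any contraction whose target $Y$ has non-polyhedral nef cone (e.g.\ a divisorial contraction of a variety of Picard rank $\geq 3$ with infinite birational group) it is a copy of $\Nef(Y)$; the rationality of the span of the contracted curve classes only makes $\operatorname{span}(F_f)$ a rational subspace, not $F_f$ polyhedral. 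Second, and more seriously, from ``$g^*F_f$ meets $\Pi$'' you conclude that (a translate of) $F_f$ \emph{is} one of the finitely many faces of $\Pi$. This is a non sequitur: a face of $\Nef^+(X)$ whose relative interior meets $\Pi$ is in general neither a face of $\Pi$ nor contained in one (again $F_{\mathrm{id}}=\Nef(X)$ is the extreme case), so the finiteness of the faces of $\Pi$ does not yet bound the number of $\Aut(X)$-orbits of contractions. Your final paragraph correctly identifies this bookkeeping as the crux, but then defers it to the references instead of supplying it, so the proof has a gap precisely at the decisive step.

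The standard way to close it, which is what Totaro's introduction argument does, is the following. After composing with an automorphism you may assume $\alpha:=f^*A\in\Pi$ for $A$ ample on $Y$; let $G$ be the unique face of $\Pi$ containing $\alpha$ in its relative interior. Any class of an effective curve $C$ defines a linear functional which is nonnegative on $\Nef(X)\supseteq\Pi$, hence it vanishes at a relative-interior point of $G$ if and only if it vanishes on all of $G$; therefore the set of curves contracted by $f$, namely $\{C:\ q$-pairing-free intersection $f^*A\cdot C=0\}$, depends only on the face $G$ and not on $f$ itself. Since a contraction is determined up to isomorphism by the set of curves it contracts (rigidity lemma, or recover $f$ as $\operatorname{Proj}$ of the section ring of the semiample class $f^*A$), the assignment ``contraction modulo $\Aut(X)$ $\mapsto$ face of $\Pi$'' is well defined and injective, and the finitely many faces of the rational polyhedral cone $\Pi$ bound the number of orbits. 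The Lagrangian fibration statement then follows simply because a Lagrangian fibration is a contraction; the remark about its pullback class being isotropic and non-big is true but not needed.
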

\begin{corollary}\label{cor:finite}
Let $X$ be a projective primitive symplectic variety with $\Q$-factorial terminal singularities and $b_2(X) \geq 5$. Then the number of marked minimal models of $X$ is finite.
\end{corollary}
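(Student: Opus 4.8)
The plan is to deduce Corollary~\ref{cor:finite} from Theorem~\ref{thm:cone} by the standard argument of Kawamata and Totaro (compare also \cite[Cor.~1.5]{MY15} and \cite[Thm.~1.3]{Ogui14}), which converts the finiteness of marked minimal models into the finiteness of ``nef chambers'' inside the movable cone.

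First I would set up the dictionary between marked minimal models and nef chambers. Since $X$ has $\Q$-factorial terminal singularities and $K_X$ is trivial, any two minimal models of $X$ are isomorphic in codimension one; hence every marked minimal model $(X',\phi)$ has $X'$ again a projective primitive symplectic variety with $\Q$-factorial terminal singularities and $b_2(X')=b_2(X)\ge 5$, and $\phi$ induces an isometry $\phi_*\colon \NS(X)_\R\xrightarrow{\ \sim\ }\NS(X')_\R$ carrying $\Movbar^+(X)$ onto $\Movbar^+(X')$; in particular Theorem~\ref{thm:cone} applies to each such $X'$. Associate to $(X',\phi)$ the cone $\mathcal C_{(X',\phi)}:=\phi^*\Nef^+(X')\subseteq \Movbar^+(X)$. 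Then $(X',\phi)\cong(X'',\phi'')$ if and only if $\mathcal C_{(X',\phi)}=\mathcal C_{(X'',\phi'')}$: the ``only if'' is immediate, and for ``if'' the birational map $\psi:=\phi''\circ\phi^{-1}\colon X'\ratl X''$ is an isomorphism in codimension one whose induced isometry maps $\Nef^+(X')$ onto $\Nef^+(X'')$, hence sends an ample class to an ample class, so $\psi$ is biregular and compatible with the markings. Thus the number of marked minimal models of $X$ equals the number of distinct nef chambers inside $\Movbar^+(X)$.

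Next comes the Looijenga-style finiteness step. Using the minimal model program for primitive symplectic varieties and the finiteness of $K$-trivial contractions underlying Corollary~\ref{cor:finite-cont}, the nef chambers form a locally rational polyhedral, $\Bir(X)$-equivariant decomposition of the big locus of $\Movbar^+(X)$, with $g\in\Bir(X)$ acting by $\mathcal C_{(X',\phi)}\mapsto\mathcal C_{(X',\phi\circ g)}$. Theorem~\ref{thm:cone}(2) provides a rational polyhedral fundamental domain $\Pi'$ for the $\Bir(X)$-action on $\Movbar^+(X)$; since $\Pi'$ is rational polyhedral and the chamber decomposition is locally finite in the interior, $\Pi'$ meets only finitely many chambers, and every chamber is a $\Bir(X)$-translate of one of these, so there are only finitely many minimal models $X_1,\dots,X_N$ up to isomorphism. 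To bound the number of markings of a fixed $X_j$, note these correspond to the chambers in the single $\Bir(X)$-orbit of $\Nef^+(X_j)$, whose stabiliser is $\Aut(X_j)$; Theorem~\ref{thm:cone}(1) for $X_j$ gives a rational polyhedral fundamental domain for the $\Aut(X_j)$-action on $\Nef^+(X_j)$, which, together with the local finiteness of the chamber decomposition and the finiteness of flopping contractions out of $\Nef^+(X_j)$, forces $[\Bir(X_j):\Aut(X_j)]<\infty$. Summing over $j=1,\dots,N$ yields the finiteness.

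The step I expect to be the crux is this last one — that each birational model contributes only finitely many markings, equivalently that $\Movbar^+(X)$ has finitely many nef chambers in total. This is precisely where both parts of Theorem~\ref{thm:cone} must be combined, and where care is needed near the boundary of $\Movbar^+(X)$ (the non-big locus), along which the chamber decomposition need not be locally finite; as in \cite{totaro} and \cite{MY15}, one transports the situation into the interior by means of the fundamental domain and controls the adjacency graph of chambers through the finiteness of flopping contractions of each $X_j$. The auxiliary facts used along the way — that minimal models of a $K$-trivial variety are isomorphic in codimension one, that an isomorphism in codimension one preserving the ample cone is biregular, and that $\NS_\R$ together with $b_2$ is a birational invariant in this class — are all standard.
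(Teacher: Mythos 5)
Your reduction of the statement to counting nef chambers, together with the claimed finiteness of markings on each fixed model, contains a genuine error at exactly the step you flag as the crux. You assert that Theorem~\ref{thm:cone}(1) for $X_j$, local finiteness of the chamber decomposition and finiteness of flopping contractions ``force $[\Bir(X_j):\Aut(X_j)]<\infty$''. This does not follow from the cone conjecture and is false in general: the cone conjecture only provides rational polyhedral fundamental domains for the two actions, and it is perfectly consistent with $\Aut(X_j)$ finite while $\Bir(X_j)$ is infinite. Already for Picard rank $2$ the paper's own Corollary~\ref{cor:ogui} describes the relevant situation: if $\Nef(X)\subsetneq \Movbar(X)$ and $\Bir(X)$ is infinite, then the boundary rays of $\Movbar(X)$ are irrational while the walls of the nef chambers (orthogonals of integral wall classes) are rational, so $\Aut(X)$ is finite; the $\Bir(X)$-orbit of the chamber $\Nef^+(X)$ is then infinite, and $\Movbar^+(X)$ contains infinitely many nef chambers, which accumulate only at the irrational boundary — so local finiteness in the interior and finiteness of contractions do not exclude this. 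Consequently, with your marking-compatible equivalence (count = total number of chambers $\phi^*\Nef^+(X')$) the statement you set out to prove is actually false in such examples; what is finite is the number of $\Bir(X)$-orbits of nef chambers, i.e.\ the number of minimal models up to isomorphism, and that is the reading of the corollary which the paper's proof supports.

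For comparison, the paper deduces the corollary directly from the bookkeeping inside the proof of Theorem~\ref{thm:cone}(1): the fundamental domain $\Pi$ of Theorem~\ref{thm:mov-conj} is subdivided into finitely many rational polyhedral subcones $\Pi_i$, $i\in I$, cut out by the finitely many wall classes whose orthogonal meets $\Pi$ — a finiteness which comes from Proposition~\ref{prop:bounded_wall} together with \cite[Proposition~3.4]{MY15}, not from Corollary~\ref{cor:finite-cont} (which is itself a consequence of Theorem~\ref{thm:cone}, so your appeal to it is circular). Each birational model $Y$ then determines the nonempty subset $I_Y\subset I$ of indices $i$ such that some $\Bir(X)$-translate of $\Pi_i$ lies in $f^*\Nef(Y)$; this subset depends only on the isomorphism class of $Y$, and since a common index forces the two models to share an ample class under an isomorphism in codimension one, distinct classes give distinct subsets, whence the bound by the number of subsets of the finite set $I$. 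Your first half — finitely many chambers meet the fundamental domain and every chamber is a $\Bir(X)$-translate of one of them, hence finitely many models $X_1,\dots,X_N$ up to isomorphism — is essentially this argument once the vague ``local finiteness'' is replaced by the wall-boundedness input; the second half should simply be deleted rather than repaired.
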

\begin{corollary}\label{cor:ogui}
Let $X$ be a projective primitive symplectic variety with $\Q$-factorial terminal singularities, $\rho(X)=2$ and assume that $b_2(X)\geq 5$. Then:
\begin{enumerate}
    \item Either both boundary rays of $\Nef (X)$ are rational and $\Aut (X)$ is a finite group
or both boundary rays of $\Nef (X)$ are irrational and $\Aut (X)$ is an infinite group.
Moreover, in the second case, ${\Nef} (X) = \overline{\Mov} (X) = \overline{\Pos} (X)$, where ${\Pos} (X)$ is the
positive cone with respect to the Beauville--Bogomolov--Fujiki form $q_X$, and $\Bir (X) =
\Aut (X)$.
 \item Either both boundary rays of $\overline{\Mov}(X)$ are rational and $\Bir (X)$
is a finite group or both boundary rays of $\overline{\Mov}  (X)$ are irrational and $\Bir (X)$ is an
infinite group.
\end{enumerate}
\end{corollary}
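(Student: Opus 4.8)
The plan is to deduce the corollary from Theorem~\ref{thm:cone} by an elementary analysis of rank-two hyperbolic cones, in the spirit of \cite[Theorem~1.3]{Ogui14}. Put $N:=\NS(X)_\R\cong\R^2$, on which $q_X$ has signature $(1,1)$; the closed positive cone $\overline{\Pos}(X)$ is then a two-dimensional cone whose two extremal rays $R_1,R_2$ lie on the isotropic cone of $q_X$. Since a nondegenerate binary quadratic form over $\Q$ is isotropic exactly when its discriminant is a square, $R_1$ and $R_2$ are either both rational or both irrational. Throughout we use the inclusions $\overline{\Nef}(X)\subseteq\overline{\Mov}(X)\subseteq\overline{\Pos}(X)$ of full-dimensional closed subcones of $N$ (full-dimensional since each contains $\Amp(X)$), the fact that $\overline{\Nef}(X)$ and $\overline{\Mov}(X)$ are the closures of $\Nef^+(X)$ and $\overline{\Mov}^+(X)$ respectively, and the fact — available from the general set-up, exactly as in the smooth case — that the representations $\Aut(X)\to\O(\NS(X),q_X)$ and $\Bir(X)\to\O(\NS(X),q_X)$ have finite kernels; thus $\Aut(X)$ (resp. $\Bir(X)$) is finite if and only if its image in the integral orthogonal group is finite.

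The key step, to be applied twice, is the following dichotomy for a pair $(G,C)$ taken to be $(\Aut(X),\Nef^+(X))$ or $(\Bir(X),\overline{\Mov}^+(X))$: \emph{either $G$ is finite and both extremal rays of $\overline C$ are rational, or $G$ is infinite, both extremal rays of $\overline C$ are irrational, and $\overline C=\overline{\Pos}(X)$}. If $G$ is finite, Theorem~\ref{thm:cone} gives $C=\bigcup_{g\in G}g^*\Pi$ for a rational polyhedral cone $\Pi$; a finite union of rational polyhedral cones which is convex is again rational polyhedral (clear in $\R^2$), so $\overline C$ has two rational extremal rays. If $G$ is infinite, its image $\Gamma$ in $\O(\NS(X),q_X)$ is infinite and discrete in the real group $\O(1,1)$, hence contains an element of infinite order, a suitable power of which is a nontrivial boost $\gamma_0$ (equal to $g^*$ for some $g\in G$, since $\Gamma$ is a group). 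Its characteristic polynomial is $t^2-(\tr\gamma_0)t+1$ with $|\tr\gamma_0|\geq3$, so its eigenvalues are irrational quadratic units and its two eigenlines, being isotropic for $q_X$, are precisely the lines spanned by $R_1$ and $R_2$; in particular $R_1,R_2$ are irrational. Moreover $\gamma_0$ preserves the closed subcone $\overline C$ of $\overline{\Pos}(X)$ while acting on the interval $\overline{\Pos}(X)=[R_1,R_2]$ as a nontrivial hyperbolic translation with repelling fixed ray $R_1$ and attracting fixed ray $R_2$; were an extremal ray of $\overline C$ to lie in the interior $\Pos(X)$, a high power of $\gamma_0$ or of $\gamma_0^{-1}$ would move it out of $\overline C$, a contradiction. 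Hence $\overline C=\overline{\Pos}(X)$, with irrational extremal rays.

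Applying the dichotomy to $(\Aut(X),\Nef^+(X))$, via Theorem~\ref{thm:cone}(1), yields item~(1) apart from the two ``moreover'' assertions in the infinite case; applying it to $(\Bir(X),\overline{\Mov}^+(X))$, via Theorem~\ref{thm:cone}(2), yields item~(2). In the infinite case of item~(1) we have $\overline{\Nef}(X)=\overline{\Pos}(X)$, so the squeeze $\overline{\Nef}(X)\subseteq\overline{\Mov}(X)\subseteq\overline{\Pos}(X)$ forces $\Nef(X)=\overline{\Mov}(X)=\overline{\Pos}(X)$. Finally $\Aut(X)\subseteq\Bir(X)$ always, and conversely any $\phi\in\Bir(X)$ is an isomorphism in codimension one (a birational self-map of a $\Q$-factorial terminal primitive symplectic variety contracts no divisor), so $\phi^*$ is a linear automorphism of $N$ preserving $\overline{\Mov}(X)=\overline{\Nef}(X)$, hence preserving its interior $\Amp(X)$; as $\phi^*$ then sends an ample class to an ample class, $\phi$ is an isomorphism, and $\Bir(X)=\Aut(X)$.

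I expect the only step requiring genuine care to be the hyperbolic-translation argument identifying $\overline C$ with $\overline{\Pos}(X)$ in the infinite case (and, relatedly, running the argument legitimately for $\Bir$ as well as $\Aut$, which is why the finiteness of both monodromy kernels is invoked); everything else is bookkeeping built on Theorem~\ref{thm:cone}. The two external inputs — finiteness of the kernels of the monodromy representations (which reduces to $H^0(X,T_X)=0$ together with finiteness of the automorphism group of a polarized variety) and the fact that birational self-maps contract no divisor — are standard for $\Q$-factorial terminal primitive symplectic varieties and are available from the earlier sections, parallel to the smooth case.
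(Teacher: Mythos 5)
Your argument is correct, and it reaches the statement by a somewhat different route than the paper. The paper's own proof is by reference: it runs Oguiso's Picard-rank-two argument from \cite[Proof of Theorem 1.3]{Ogui14} verbatim, substituting the singular birational cone conjecture (Theorem~\ref{thm:mov-conj}) for Oguiso's Theorem~4.2 and the chain \eqref{eq cones inclusions} together with Lemma~\ref{lemma pos in big} for his Proposition~4.1; in particular it is carried out \emph{before} the $\Aut$/$\Nef^+$ cone conjecture is established, so item (1) is extracted from the birational statement in Oguiso's way. You instead invoke both halves of Theorem~\ref{thm:cone} and run a single rank-two hyperbolic dichotomy twice: finite group $\Rightarrow$ finite union of rational polyhedral translates of $\Pi$ $\Rightarrow$ rational boundary rays; infinite group $\Rightarrow$ an integral boost $\gamma_0$ with irrational quadratic eigenvalues whose eigenrays are the isotropic rays, forcing the invariant cone to be $\overline{\Pos}(X)$ with irrational rays. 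This is logically sound (no circularity: the proof of Theorem~\ref{thm:cone}(1) does not use the corollary), is self-contained apart from standard inputs, and for item (2) is essentially Oguiso's argument anyway; what it buys is a uniform treatment of (1) and (2) at the price of using the full cone conjecture rather than only its birational half. One caveat: the finiteness of the kernels of $\Aut(X)\to \O(\NS(X))$ and $\Bir(X)\to \O(\NS(X))$, which your infinite-case direction genuinely needs, is \emph{not} proved anywhere in the paper, so you should not present it as "available from the earlier sections"; it does hold, and your sketch is the right one — $H^0(X,T_X)\cong H^0(X,\Omega_X^{[1]})=0$ since $h^1(\sO_X)=0$ and the singularities are rational, $\Pic^0(X)=0$ so an automorphism trivial on $\NS(X)$ fixes an ample line bundle and lies in a finite linear algebraic group, and a birational self-map trivial on $\NS(X)$ preserves the ample cone and hence is biregular (the paper itself uses this last fact after Theorem~\ref{thm:hodge-torelli} and in Theorem~\ref{thm:semidirect-gen}) — but it should be stated and justified as an additional standard lemma, exactly as it is implicitly needed in Oguiso's smooth-case proof.
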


The hypothesis $b_2(X)\geq 5$ in the above results is needed in order to apply the Hodge theoretic global Torelli theorem for primitive symplectic varieties (cf. Theorem \ref{thm:hodge-torelli}) and in order to ensure that the monodromy group has finite order inside the group of isometries of $H^2(X)$ (cf. \cite[Theorem~8.2]{BL18}). Of course, the Cone conjecture holds trivially for primitive symplectic varieties with $b_2(X)=3$.\\
One of the keys to prove the conjecture is the study of the monodromy group and in particular the fact that reflections in prime exceptional divisors (i.e.\! prime $\Q$-Cartier divisors having negative square with respect to the Beauville--Bogomolov--Fujiki quadratic form, see Definition~\ref{definition prime exceptional divisor})  
are integral monodromy operators preserving the Hodge structure, a result that seems interesting in its own right. More generally, we summarize below in one single statement our results on the monodromy group. 

\begin{theorem}\label{thm:mon-intro} Let $X$ be a projective primitive symplectic variety with terminal singularities. 
\begin{enumerate}
\item[(1)] (= Theorem \ref{thm:refl-are-mon}).  Let $\pi:X\to \widebar X$ be a birational morphism of primitive symplectic varieties. Then the natural morphism $p:\Def^\lt(X)\to \Def(\widebar X,\pi)$ is
finite Galois cover whose Galois group is isomorphic to
$\Pi_{B\in \mathcal B}W_B$, where $W_B$ is the folded Weil group defined in (\ref{eq weyl group}).
\item[(2)](= Theorem \ref{theorem reflection}).  Let $E$ be a prime exceptional divisor on $X$. Then the reflection 
$$
R_E: H^2(X,\Q)\to H^2(X,\Q), \quad \alpha \mapsto \alpha -2 \frac{q(E,\alpha)}{q(E,E)} E
$$
is integral and yields a monodromy Hodge isometry. 
\item[(3)](= Theorem \ref{thm:semidirect}). Suppose furthermore that $X$ is $\Q$-factorial. The subgroup $W_{\Exc}$ generated by reflections of all prime exceptional divisors in $X$ is a normal subgroup of $\Mth(X)$ and the group $\Mth(X)$ is equal to the semidirect product of the normal subgroup $W_{\Exc}$ and of the subgroup $\Mtb(X)$  of $\Mth(X)$ consisting of all monodromy operators induced by birational maps.
\end{enumerate}
\end{theorem}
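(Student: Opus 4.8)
The three items are the Hodge‑theoretic/monodromy input needed for the cone conjecture, and the plan is to prove them in the order $(1)\Rightarrow(2)\Rightarrow(3)$, following the strategy of Sterk as implemented by Markman and Amerik--Verbitsky in the smooth case, but with the deformation theory of hyperk\"ahler manifolds replaced by Namikawa's theory of (Poisson, equivalently locally trivial) deformations of symplectic varieties in the form available for primitive symplectic varieties after Bakker--Lehn.

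\textbf{Part (1).} First I would use that $\Def^\lt(X)$ is smooth (unobstructedness of locally trivial deformations of primitive symplectic varieties, after Namikawa and Bakker--Lehn) with local period map an open immersion into the relevant period domain, and that $\Def(\widebar X,\pi)$ — locally trivial deformations of $\widebar X$ to which $\pi$ lifts — is likewise smooth; through periods, $p$ then becomes a finite branched covering of smooth germs, \'etale away from a discriminant $\Delta$. The branches of $\Delta$ are indexed by the set $\mathcal B$ of codimension‑two symplectic leaves of $\widebar X$ resolved by $\pi$; over a leaf $B$ the morphism $\pi$ is, analytically locally, the (partial) resolution of an $ADE$ surface singularity bundled over $B$, so the universal family of the $X$'s over the corresponding branch of $\Delta$ is its versal deformation, and the monodromy is the Weyl group of that $ADE$ root system twisted by the $\pi_1(B)$‑action on its dual graph, i.e. the folded Weyl group $W_B$. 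Independence of the leaves yields deck group $\prod_{B\in\mathcal B}W_B$; since this acts transitively on the (finite, because they biject with the finitely many partial resolutions, equivalently movable‑cone chambers) fibers of $p$, the cover is finite Galois with this group. \emph{This local analysis over the symplectic leaves, and the identification of the monodromy with the folded Weyl group, is the step I expect to be the main obstacle}, since it is where the geometry of symplectic singularities genuinely enters, beyond formal deformation theory.

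\textbf{Part (2).} Given a prime exceptional divisor $E$ on $X$, I would deform $X$ inside the (smooth, by local Torelli) locus of $\Def^\lt(X)$ along which the class of $E$ stays of type $(1,1)$ to a primitive symplectic variety $X'$ on which the deformed class spans a $q$‑negative extremal ray, hence is contracted by a birational morphism $\pi\colon X'\to\widebar X$ of primitive symplectic varieties — such a point exists by surjectivity of the period map together with the description of the movable cone of a very general such deformation. Applying part (1) to $\pi$ exhibits $R_E$ as a Coxeter generator of the deck group $\prod_B W_B$, realized as the monodromy of the universal family around a branch of $\Delta$; therefore $R_E$ lies in the image of $\pi_1$ in $\O(H^2(X',\Z))$, so it is \emph{integral}, it is a \emph{monodromy} operator, and, being induced by a loop over which $E$ stays algebraic, it is a \emph{Hodge} isometry; that it is given by the stated reflection formula follows from the $ADE$ picture (it fixes $E^\perp$ and negates $E$). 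Transporting back along the deformation from $X'$ to $X$ — the monodromy group and the classes of prime exceptional divisors being locally trivial deformation invariants — gives the assertion for $X$ itself.

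\textbf{Part (3).} With $X$ now $\Q$‑factorial, part (2) gives $W_{\Exc}\subseteq\Mth(X)$. The next step is to show that $\Movbar(X)$ (intersected with the positive cone) is a fundamental domain for $W_{\Exc}$: its walls are exactly the hyperplanes $E^\perp$ for $E$ prime exceptional, by the divisorial Zariski decomposition / Boucksom‑type description of the movable cone, and local finiteness of these walls makes the standard Coxeter‑chamber argument go through as in the hyperk\"ahler case. Given $g\in\Mth(X)$, it permutes the $W_{\Exc}$‑chambers, so $wg$ preserves $\Movbar(X)$ for a suitable $w\in W_{\Exc}$; a monodromy Hodge isometry preserving the movable cone (and sending an interior movable class to an interior movable class) is induced by a birational self‑map of $X$ by the Torelli‑type results for primitive symplectic varieties, whence $wg\in\Mtb(X)$ and $\Mth(X)=W_{\Exc}\cdot\Mtb(X)$. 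The intersection $W_{\Exc}\cap\Mtb(X)$ is trivial, since a birational self‑map preserves $\Movbar(X)$ while no nontrivial element of the Coxeter group $W_{\Exc}$ fixes the fundamental chamber. Finally $W_{\Exc}$ is normal in $\Mth(X)$: writing $g=wf^{*}$ with $w\in W_{\Exc}$ and $f\in\Bir(X)$, and using that $f$ is an isomorphism in codimension one, one gets $gR_Eg^{-1}=w\,R_{(f^{-1})_{*}E}\,w^{-1}$ with $(f^{-1})_{*}E$ again the class of a prime exceptional divisor on $X$, so the conjugate stays in $W_{\Exc}$. This yields the semidirect product decomposition $\Mth(X)=W_{\Exc}\rtimes\Mtb(X)$.
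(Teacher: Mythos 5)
Your outline follows the paper's route (Namikawa-style transversal ADE analysis and folded Weyl groups for (1), contraction of the divisor plus the Galois cover for (2), reflection-group chamber geometry plus Torelli for (3)), but two of the steps you treat as routine are exactly where the paper has to work, and as written they are gaps. In (1), $\Def(\widebar X,\pi)$ is \emph{not} the space of locally trivial deformations of $\widebar X$ to which $\pi$ lifts: it is the image of $\Def^\lt(X)$ inside the full Kuranishi space $\Def(\widebar X)$, and at a very general point the contraction becomes an isomorphism, so these deformations of $\widebar X$ are generically not locally trivial. Hence its smoothness is not free; the paper obtains it from projectivity together with Namikawa's unobstructedness of $\Def(\widebar X)$ via the Poisson-deformation fiber-product identification \eqref{eq:iota}, and the Galois property is proved through the period map (after proving finiteness of $p$, Lemma~\ref{lemma deflt finite}, Theorem~\ref{theorem galois cover without group}), not by the unproven claim that the fibers biject with partial resolutions or movable-cone chambers. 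You flagged this as the main obstacle, fairly. More serious is (2): the statement is for terminal, not necessarily $\Q$-factorial $X$, and your step ``the deformed class spans a negative extremal ray, hence is contracted'' has no justification in that generality — the contractibility result \cite[Theorem~1.2]{LMP21} and the MMP machinery behind it require $\Q$-factoriality, and there is no reason a deformation of a non-$\Q$-factorial $X$ inside the Hodge locus of $E$ becomes contractible. The paper's fix is to pass to a $\Q$-factorialization $Y\to X$ (small, so $E$ pulls back to a prime exceptional divisor), apply (1) on a model of $Y$, identify the nontrivial deck transformation with $R_{\pi^*E}$ (it fixes $H^2(\widebar Y,\Z)$ and acts faithfully on $H^{1,1}$), note that it restricts to $R_E$ on $H^2(X,\Z)_\tf$, and then descend the \emph{monodromy} property by checking that $\Def^\lt(X)\subset\Def^\lt(Y)$ meets the branch locus (the Hodge locus of $\pi^*E$) transversally, so the double cover remains nontrivial after restriction. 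Some such descent is unavoidable once $X$ itself is not $\Q$-factorial.

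In (3) the gap is the assertion ``given $g\in\Mth(X)$, it permutes the $W_{\Exc}$-chambers''. This amounts to $g$ preserving the system of reflection hyperplanes, i.e. essentially to the normality of $W_{\Exc}$ in $\Mth(X)$ — which you prove only afterwards, using the product decomposition derived from this very step, so the argument is circular. The missing idea is the paper's notion of \emph{stably exceptional} classes: for $g\in\Mth(X)$ the class $g([E])$ need not be $\pm$ the class of a prime exceptional divisor on $X$ itself; one only knows (Proposition~\ref{prop:prime-is-se}, via deformation of the ruling curve along its Hodge locus) that $\pm g([E])$ is prime exceptional on a generic small deformation, and Example~\ref{example stably exceptional degenerates} shows one cannot do better. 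The paper therefore first proves normality of the Weyl group generated by reflections in stably exceptional classes (Lemma~\ref{lem:normality}), then uses Vinberg--Shvartsman together with $\FE{X}=\SE{X}$ and the identification of the walls of $\FE{X}$ with prime exceptional hyperplanes to conclude a posteriori that this group equals $W_{\Exc}$ and that the chambers are permuted; only then does the Dirichlet-type argument and the identification of the stabilizer of $\FE{X}$ with $\Mtb(X)$ (via the Hodge-theoretic Torelli Theorem~\ref{thm:hodge-torelli}, which also forces $b_2\geq 5$, a hypothesis your sketch leaves implicit but which is genuinely needed) give the semidirect product. Your normality argument at the end, conjugating $R_E$ by $g=w\circ f_*$, is fine once the decomposition is available, but it cannot be used to bootstrap the chamber-permutation step it depends on.
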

Notice that Theorem \ref{thm:mon-intro}, item (1), which is crucial to prove item (2), is false without the terminality assumption, and there are examples, already in dimension 2, where item (2) fails, see Example \ref{example:counterex}.
  
As for the proof of the cone conjecture, we follow the strategy adopted in the smooth case, which relies on several layers of different results proven over the years by several authors, that we try to acknowledge in the proofs. In particular, we follow Markman \cite{Mar11} to show that the birational cone conjecture holds, and then, as in \cite{MY15} and \cite{AV17}, we deduce the cone conjecture from it and from a boundedness result for squares of wall divisors which we establish using  Amerik's and Verbitsky's work on density of orbits \cite{AV20}. 

To overcome the technical difficulties that arise in the singular case we rely upon the recent progress in the moduli theory of singular symplectic varieties by B. Bakker and the first named author \cite{BL18, BL21} as well as on some technical tools developed  in the singular case in \cite{KMPP19, LMP21, BGL20}. 
For Theorem~\ref{thm:mon-intro}, we also substantially rely on ideas and results due to Markman and Namikawa \cite{Mar10galois,Mar11,Nam10,Nam11}.

\subsection*{Acknowledgments.}
We wish to thank Prof. Namikawa for useful exchanges.
GM was supported by PRIN 2020KKWT53 "CuRVi" and is part of INdAM GNSAGA and acknowledges support from it. CL was supported by the DFG through research grant Le 3093/3-2. GP was supported by the ANR project ``Foliage'' (ANR-16-CE40-0008) and by the CNRS International Emerging Actions ``Birational and arithmetic aspects of orbifolds''.  

\subsection*{Notation and Conventions} 
The term variety will denote an integral separated scheme of finite type over $\C$ in the algebraic setting or an irreducible and reduced separated complex space in the complex analytic setting. 

%-------------------------------------------
%-------------------------------------------
%-------------------------------------------
\section{Symplectic Varieties}\label{section symplectic varieties}
%-------------------------------------------
%-------------------------------------------
%-------------------------------------------

We introduce the relevant classes of singular symplectic varieties and recall basic facts about their second cohomology lattice.

%-------------------------------------------
\subsection{Primitive Symplectic Varieties} \label{section symplectic}
%-------------------------------------------

A \emph{symplectic form} on a smooth variety $U$ is a holomorphic $2$-form $\sigma$ such that $d\sigma=0$ and $\sigma$ defines a symplectic form in the sense of linear algebra on the tangent bundle of $U$. Recall that a \emph{symplectic variety} in the sense of Beauville \cite{Bea00} is a pair $(X,\sigma)$ consisting of a normal (complex analytic) variety $X$ and a holomorphic symplectic form $\sigma \in H^0(X^\reg,\Omega_X^2)$ on the regular part $X^\reg$ such that there is a resolution of singularities $\pi:Y \to X$ for which $\pi^*\sigma$ extends to a holomorphic form on $Y$. Recall that an \emph{irreducible symplectic manifold} is a simply connected compact Kähler manifold such that $H^0(X,\Omega_X^2)=\C\sigma$ for a symplectic form~$\sigma$.

\begin{definition}\label{definition primitive symplectic}
A normal compact Kähler variety $X$ is called \emph{primitive symplectic} if it satisfies $H^1(X,\sO_X)=0$ and $(H^0(X,(\Omega_{X}^2)^{ \vee\vee})=)\ H^0(X^\reg,\Omega_{X}^2)= \C \sigma$, where $(X,\sigma)$ is a symplectic variety.
\end{definition}

An important feature is that the second cohomology group of a primitive symplectic variety carries a pure Hodge structure (of weight two), cf. e.g. \cite[Corollary 3.5]{BL18}.

\begin{remark}\label{remark definition symplectic}\

\begin{enumerate}
	\item For a normal variety $X$ such that $X^\reg$ has a symplectic form $\sigma$, having canonical singularities is in fact equivalent to Beauville's condition \cite{Bea00} that the pullback of $\sigma$ to a resolution of $X$ extends as a regular $2$-form. By \cite{Elk81} and \cite[Corollary~1.7]{KS21} this is also equivalent to $X$ having rational singularities.
	\item Let $X$ be a normal variety having a symplectic form on its regular part. A resolution of singularities $\pi:Y\to X$ such that the pullback of the symplectic form extends to a symplectic form on $Y$ is referred to as a \emph{symplectic resolution}. A resolution is symplectic if and only if it is crepant.
	\item An irreducible symplectic manifold is primitive symplectic (see e.g. \cite[Section 2]{LMP21}). More generally, there is the notion of an irreducible symplectic variety, see \cite{GKKP11}, and those are also primitive symplectic. The converse fails, take e.g. a singular Kummer K3, but a \emph{smooth} primitive symplectic variety is irreducible symplectic by \cite{Sch20def}.
	\item Note that for a bimeromorphic contraction $\pi:Y \to X$ from a primitive symplectic variety to a complex variety $X$, the variety $X$ is primitive symplectic if and only if it is Kähler. 
\end{enumerate}	
\end{remark}

\begin{example}
Let $S$ be a K3 surface and let $v=mw\in H^*(S,\Z)$ be a Mukai vector and let $H$ be $v$-general. Then, by \cite[Theorem~1.19]{peregorapagnetta}  the moduli space of Gieseker $H$-semistable sheaves on $S$ with Mukai vector $v$ is a primitive symplectic variety, if $v \not= (0, mH, 0)$ or $S$ has Picard
rank one. We checked in \cite[Theorem 2.12]{LMP21} that for moduli spaces of objects there is no restriction on $S$ or $v$.
\end{example}
\begin{example}
Let $X$ be a primitive symplectic variety and let $G\subset \Aut(X)$ be a finite group of automorphisms of $X$ which fix the symplectic form. Then $X/G$ is a primitive symplectic variety.
\end{example}

%-------------------------------------------
\subsection{The quadratic form}\label{section bbf form}
%-------------------------------------------

Recall that the second cohomology of a primitive symplectic variety carries an integral nondegenerate quadratic form $q_X$, the \emph{Beauville--Bogomolov--Fujiki} form (BBF form in what follows). Due to the work of many people, see \cite[\S 5.1]{BL18} and references therein, we know that this form shares many properties with its counterpart for irreducible symplectic manifolds. This is summarized in the following.

\begin{proposition}\label{proposition bbf form}
Let $X$ be a primitive symplectic variety of dimension $2n$ and let $\sigma\in H^{2,0}(X)$ be the class of the symplectic form. Then
\begin{equation}\label{eq definition bbf form}
q_X(\alpha):= \frac{n}{2}\int_X \alpha^2\cdot(\sigma\cdot\widebar{\sigma})^{n-1} + (1-n) \left(\int_X \alpha\cdot\sigma\cdot(\sigma\cdot\widebar{\sigma})^{n-1}\right)\cdot \left( \int_X \alpha\cdot\widebar{\sigma}\cdot(\sigma\cdot\widebar{\sigma})^{n-1}\right)
\end{equation}
defines a non-degenerate quadratic form which, up to scaling by a real number, is defined over $\Z$. Moreover, the associated bilinear form has signature $(3,b_2(X)-3)$, satisfies $q_X \vert_{(H^{2,0}(X)\oplus H^{0,2}(X))_\R} > 0$, and the orthogonal complement to $H^{2,0}(X)\oplus H^{0,2}(X)$ with respect to the quadratic form $q_X$ equals $H^{1,1}(X)$.
\end{proposition}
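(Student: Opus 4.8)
The statement to prove is Proposition \ref{proposition bbf form}, which collects the standard properties of the Beauville--Bogomolov--Fujiki form for a primitive symplectic variety: that the displayed formula defines a non-degenerate quadratic form, rational up to scaling, with signature $(3,b_2(X)-3)$, positive on the real span of $H^{2,0}\oplus H^{0,2}$, and with $H^{1,1}$ as the $q_X$-orthogonal complement of $H^{2,0}\oplus H^{0,2}$.

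\medskip

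\textbf{Approach.} The plan is to reduce everything to the case of a resolution, where the statement is essentially the classical Beauville--Bogomolov--Fujiki theory, and then transport the conclusions along the pullback on second cohomology. First I would fix a symplectic resolution in the sense of Remark~\ref{remark definition symplectic}(2) — or, if none exists, any resolution $\pi\colon Y\to X$ with $Y$ compact Kähler on which $\pi^*\sigma$ extends to a holomorphic (no longer symplectic) $2$-form $\sigma_Y$; the existence of $\pi$ is exactly Beauville's condition, guaranteed since primitive symplectic varieties have canonical (indeed rational) singularities. The key point is that $\pi^*\colon H^2(X,\Z)\to H^2(Y,\Z)$ is an injection of Hodge structures with $\pi^*\sigma=\sigma_Y$, and by the projection formula the integral $\int_X$ of a product of classes pulled back from $X$, together with powers of $\sigma\cdot\bar\sigma$, agrees with the corresponding integral $\int_Y$ of the pulled-back classes. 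Hence the formula \eqref{eq definition bbf form} for $q_X$ is literally the restriction to $\pi^*H^2(X,\R)$ of the analogous Fujiki-type quadratic expression on $H^2(Y,\R)$.

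\medskip

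\textbf{Key steps, in order.} (i) Compatibility with pullback: verify $\int_X\alpha_1\cdots\alpha_{2n}=\int_Y\pi^*\alpha_1\cdots\pi^*\alpha_{2n}$ and $\pi^*\sigma = \sigma_Y$, so that $q_X(\alpha)=q_Y(\pi^*\alpha)$ where $q_Y$ is given by the same formula on $Y$ with $\sigma_Y$ in place of $\sigma$. (ii) Rationality: on $Y$ the Hodge--Riemann/Fujiki machinery (or directly the references collected in \cite[\S5.1]{BL18}) shows $q_Y$ is, up to a positive real scalar, an integral quadratic form on the sublattice $\pi^*H^2(X,\Z)$; pulling back, $q_X$ is rational up to scaling. (iii) Hodge-theoretic positivity: using that $H^2(X)$ carries a pure weight-two Hodge structure (cited after Definition~\ref{definition primitive symplectic}) with $H^{2,0}=\C\sigma$ one computes directly from \eqref{eq definition bbf form} that $q_X(\sigma+\bar\sigma)>0$ and more generally $q_X>0$ on the real two-plane $(H^{2,0}\oplus H^{0,2})_\R$, and that for $\alpha\in H^{2,0}\oplus H^{0,2}$ and $\beta\in H^{1,1}$ the mixed terms force $q_X(\alpha,\beta)=0$ by type reasons (the integrand $\alpha\cdot\beta\cdot(\sigma\bar\sigma)^{n-1}$ has the wrong Hodge type), giving the orthogonal decomposition $H^2 = (H^{2,0}\oplus H^{0,2})\perp H^{1,1}$. (iv) Non-degeneracy and signature: the restriction of $q_X$ to $H^{1,1}(X)_\R$ is non-degenerate of signature $(1,b_2-3)$ — this is where I would invoke the Hodge index type statement on $H^{1,1}$, e.g. via a Kähler class together with the hard Lefschetz/Hodge--Riemann relations valid on $X$ (as recorded in \cite{BL18}), or by transporting the index theorem from $Y$ after controlling the extra classes in $H^{1,1}(Y)$ not coming from $X$. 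Combining the $(2,0)$ part (signature $(2,0)$ on the real span, after noting the ambient structure) with the $(1,b_2-3)$ on $H^{1,1}$ gives total signature $(3,b_2(X)-3)$, and non-degeneracy is immediate from non-degeneracy on each summand.

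\medskip

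\textbf{Main obstacle.} The delicate point is step (iv): establishing non-degeneracy and the precise signature on $H^{1,1}(X)$. On a smooth hyperkähler manifold this is Beauville--Bogomolov, but on the singular $X$ one does not have the full Kähler package for free; one must either cite the already-developed Hodge theory for primitive symplectic varieties (the Hodge decomposition on $H^2$, hard Lefschetz with a Kähler class, and the Fujiki relation $q_X(\alpha)^n = c_X\int_X\alpha^{2n}$), which is precisely the content assembled in \cite[\S5.1]{BL18} and the references therein, or argue on the resolution $Y$ and identify the orthogonal complement of $\pi^*H^2(X)$ inside $H^2(Y)$ as a negative-definite space spanned by exceptional classes, so that the signature on $\pi^*H^2(X)_\R$ is the signature on $H^2(Y)_\R$ restricted appropriately. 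I expect the cleanest route is to quote the BBF-form properties wholesale from \cite[\S5.1]{BL18} as the proposition's statement already signals ("Due to the work of many people\ldots we know that this form shares many properties\ldots"), and to present the proof as an assembly of those inputs with the pullback compatibility and the elementary type computations in (iii) spelled out.
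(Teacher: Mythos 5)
The paper's own proof of this proposition is a one-line citation: it quotes the properties wholesale from \cite[\S 5]{BL18}, specifically Lemma~5.3, Lemma~5.7 and the references given there. Your closing recommendation --- to quote the BBF-form package from \cite[\S 5.1]{BL18} and present the rest as bookkeeping --- therefore coincides with what the paper actually does, and in that form the proposal is fine.

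The self-contained route you sketch via an arbitrary resolution, however, has a genuine gap at steps (ii) and (iv) if taken literally. When $X$ admits no symplectic resolution, the resolution $Y$ is just a compact K\"ahler manifold, not an irreducible symplectic one: the extended form $\sigma_Y$ is degenerate along the exceptional locus, so the ``Fujiki-type quadratic expression on $H^2(Y,\R)$'' built from $\sigma_Y$ is not the BBF form of any hyperk\"ahler manifold, and the classical Beauville--Bogomolov--Fujiki machinery gives you neither its rationality nor its signature on $H^2(Y)$. Likewise, the identification of the orthogonal complement of $\pi^*H^2(X)$ in $H^2(Y)$ as a negative-definite span of exceptional classes is itself part of the singular BBF package (compare \cite[Lemma~5.21]{BL18}), not an input one can import from $Y$ for free; invoking it here is circular. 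The genuine content --- the Fujiki relation on $X$, integrality up to scaling, non-degeneracy and signature $(3,b_2(X)-3)$ --- is due to Namikawa, Kirschner, Matsushita and Schwald and is exactly what \cite{BL18} assembles, which is why the paper proves nothing beyond the citation. Your step (iii) (type computations giving positivity on $(H^{2,0}\oplus H^{0,2})_\R$ and orthogonality to $H^{1,1}$) is correct and elementary, but it is also contained in the cited lemmas, so the honest proof here is the citation, with at most the pullback compatibility of (i) as a remark.
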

\begin{proof}
See Section~5 of \cite{BL18}, especially Lemma~5.3, Lemma~5.7 and the references given there.
\end{proof}

\begin{remark}\label{remark torsion cohomology}
As the singularities of a primitive symplectic variety are rational, see Remark \ref{remark definition symplectic}, the pullback $H^2(X,\Z) \to H^2(Y,\Z)$ to a resolution is injective by a well-known fact, see e.g. \cite[Lemma 2.1]{BL21}. In particular, $H^2(X,\Z)$ is torsion free if $X$ admits an irreducible symplectic resolution. Otherwise we will sometimes have to work with the torsion free part $H^2(X,\Z)_\tf$.
\end{remark}
\begin{remark}\label{rmk:BBduality}
A useful consequence of Proposition \ref{proposition bbf form} is that we can switch back and forth between (rational) homology and cohomology in degree $2$. If $X$ is primitive symplectic and $\alpha \in H^2(X,\Q)$, we write $\alpha^\vee\in H_2(X,\Q)$ for the unique class satisfying
\begin{equation}\label{eq definition dual class}
\alpha^\vee\cdot\beta = q_X(\alpha,\beta) \qquad \forall \beta \in H^2(X,\Q).
\end{equation}
This defines an isomorphism $H^2(X,\Q) \to H_2(X,\Q)$, $\alpha \mapsto \alpha^\vee$. For $c \in H_2(X,\Q)$ we will also denote by $c^\vee \in H^2(X,\Q)$ the preimage of $c$ under this isomorphism so that $c^{\vee\vee}  = c$ and $\alpha^{\vee\vee}=\alpha$. Notice that if $c$ is the class of a curve, then $c^\vee$ is a $(1,1)$-class. Note that even if $c \in H_2(X,\Z)$, we will only have $c^\vee \in H^2(X,\Q)$. It is worthwhile to point out that our definition of $\alpha^\vee$ differs from that of Markman \cite[p.~357]{Mar13} by a scalar multiple, which does however not affect any argument.
\end{remark}

%-------------------------------------------
\subsection{Deformation theory of primitive symplectic varieties}\label{section deformations}
%-------------------------------------------

A lot is known about deformations of singular symplectic varieties thanks to the pioneering work of Namikawa in \cite{Nam01a,Nam06,Nam08}. Let $X$ be a primitive symplectic variety. Then mainly due to work of Grauert and Douady \cite{Gra74, Dou74}, there is a universal deformation $\scrX \to \Def(X)$ of $X$ over a complex space germ and we refer to $\Def(X)$ as the Kuranishi space of $X$, see e.g. the beginning of \S~4  of \cite{BL18} for general background on Kuranishi spaces and further references. 

For a projective symplectic variety (in the sense of Beauville), Namikawa proved in \cite[Theorem~1]{Nam06} that the Kuranishi space $\Def(X)$ is smooth. Note that Namikawa's unobstructedness result is under the additional hypothesis that a $\Q$-factorial terminalization exists, but this is always fulfilled by \cite[Corollary~1.4.3]{BCHM10}. Moreover, by Corollary~2 and Proposition~1 of \cite{Nam06} if $X$ admits a symplectic resolution, the universal deformation is shown to be a smoothing. Thus, looking at all deformations of $X$ does not give us information about the singularities. Instead it has proven fruitful to consider \emph{locally trivial} deformations only. We refer the reader to \cite[\S~2]{BGL20} for a general reference for locally trivial deformations. 

By \cite[(0.3) Corollary]{FK87} there exists a closed complex subspace $\Def^\lt(X)\subset \Def(X)$ of the Kuranishi space parametrizing the locally trivial deformations of $X$. Its tangent space is given by $H^1(X,T_X)$ and by \cite[Theorem~4.7]{BL18} it is smooth of dimension $h^{1,1}(X)$. We will write 
\begin{equation}\label{eq locally trivial universal deformation}
\scrX \to \Def^\lt(X)
\end{equation}
for the universal locally trivial family of deformations, that is, the restriction of the Kuranishi family to $\Def^\lt(X)$.

%-------------------------------------------
\subsection{Deformations of contractions of primitive symplectic varieties}\label{section deformations of contractions}
%-------------------------------------------

Let $c:X \to \widebar{X}$ be a proper bimeromorphic morphism between primitive symplectic varieties. Let us recall that by \cite[Lemma~5.21]{BL18} there is a $q_X$-orthogonal decomposition 
\begin{equation}\label{eq:H2}
    H^2(X,\Q)=c^* H^2(\widebar X,\Q)\oplus N
\end{equation}
where the summands are defined over $\Z$ and $N$ is negative definite and of type $(1,1)$. Recall that by \cite[Proposition 11.4]{KM92}, there is a commutative diagram
\begin{equation}
\label{eq defo morphism}
\xymatrix{
\scrX \ar[d]\ar[r]& {\widebar{\mathscr{X}}} \ar[d]\\
\Def(X) \ar[r]^{p} & \Def(\widebar{X}) \\
}
\end{equation}
between the universal deformations of $X$ and $\widebar X$. We will restrict \eqref{eq defo morphism} to the subspace $\Def^\lt(X) \subset \Def(X)$. Then we have  the following slight generalization of \cite[Proposition~5.22]{BL18} (in comparison to which we drop the assumptions on projectivity and $\Q$-factoriality), which is an easy consequence of  \cite[Corollary~2.29]{BGL20}. It says that the locally trivial deformations of $\widebar X$ are identified via $p$ with the locus of locally trivial deformations of $X$ where the classes of contracted curves remain Hodge.

\begin{proposition}\label{prop defo}
Let $X \to \widebar{X}$ be a proper bimeromorphic map between primitive symplectic varieties and let $N$ be as in \eqref{eq:H2}. Denote by $\Def^\lt(X,N)\subset \Def^\lt(X)$ the sublocus of deformations such that all classes in $N$ remain of type $(1,1)$. Then the following holds:
\begin{enumerate}
	\item $p^{-1}(\Def^\lt(\widebar X)) \cap \Def^\lt(X) = \Def^\lt(X,N)\subset \Def^\lt(X)$.
	\item The restriction $p:\Def^\lt(X,N)\to\Def^\lt(\widebar X)$ is an isomorphism.
\end{enumerate}
\end{proposition}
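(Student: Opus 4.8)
The plan is to leverage the two cited building blocks — the negative-definiteness/type-$(1,1)$ decomposition \eqref{eq:H2} from \cite[Lemma~5.21]{BL18} and the compatibility of Kuranishi families \eqref{eq defo morphism} from \cite[Proposition~11.4]{KM92} — together with the period map, and to reduce both assertions to the already-quoted statement \cite[Corollary~2.29]{BGL20}. First I would recall the structure of the period domain: by Proposition~\ref{proposition bbf form} the local Torelli/period map identifies $\Def^\lt(X)$ with an open neighbourhood of the period point in the period domain $\Omega_X \subset \P(H^2(X,\C))$, and likewise for $\widebar X$, where $\Omega_X = \{[\sigma] : q_X(\sigma)=0,\ q_X(\sigma,\bar\sigma)>0\}$. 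Under the orthogonal decomposition $H^2(X,\Q)=c^*H^2(\widebar X,\Q)\oplus N$ of \eqref{eq:H2}, a period point $[\sigma]\in\Omega_X$ has all classes in $N$ of type $(1,1)$ exactly when $\sigma\perp N$, i.e. when $[\sigma]$ lies in the sub-period-domain $\Omega_{\widebar X}\subset\P(c^*H^2(\widebar X,\C))$; this is the set-theoretic description of $\Def^\lt(X,N)$.

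For item (1), I would argue both inclusions. For $\subseteq$: if a locally trivial deformation $X_t$ lies over a locally trivial deformation $\widebar X_t$ of $\widebar X$ via $p$, then the morphism $X_t\to\widebar X_t$ persists in the family (from \eqref{eq defo morphism}) and is still bimeromorphic and contracts the same curve classes (this is where one invokes \cite[Corollary~2.29]{BGL20}, which guarantees the contraction deforms along the locally trivial locus precisely where the relevant classes stay Hodge); hence the decomposition \eqref{eq:H2} persists with $N$ staying of type $(1,1)$, so $X_t\in\Def^\lt(X,N)$. For $\supseteq$: if $X_t\in\Def^\lt(X,N)$, then all classes of $N$ — in particular the classes of the curves contracted by $c$ — remain of type $(1,1)$, and by \cite[Corollary~2.29]{BGL20} the contraction $c$ itself deforms to a bimeromorphic morphism $X_t\to\widebar X_t$ onto a primitive symplectic variety (Kähler is preserved by Remark~\ref{remark definition symplectic}(4)), so $\widebar X_t$ is a locally trivial deformation of $\widebar X$ and $p(t)$ lands in $\Def^\lt(\widebar X)$; thus $X_t\in p^{-1}(\Def^\lt(\widebar X))\cap\Def^\lt(X)$. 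Combined with the period-domain description above, both sides equal $\Omega_{\widebar X}\cap(\text{nbhd of the period point})$.

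For item (2), once (1) is known, the map $p:\Def^\lt(X,N)\to\Def^\lt(\widebar X)$ is a morphism of smooth germs, and I would check it is an isomorphism on the level of period domains: the differential of $p$ at the base point is the map $H^1(X,T_X)\to H^1(\widebar X,T_{\widebar X})$, which under the identification of tangent spaces with $H^{1,1}$ (via the symplectic form, cf. \S\ref{section deformations}) is dual to the inclusion $c^*H^2(\widebar X)\hookrightarrow H^2(X)$ restricted to the $(1,1)$-parts, hence an isomorphism because $N$ contributes nothing to $H^{1,1}(\widebar X)$. Since $p$ is étale between smooth germs of the same dimension and is bijective by the period-domain description in (1) (both are the same open subset of $\Omega_{\widebar X}$, with $p$ inverse to the inclusion of periods induced by $c^*$), it is an isomorphism of germs.

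The main obstacle I anticipate is the first inclusion in (1), i.e. showing that merely keeping the classes in $N$ of type $(1,1)$ forces the \emph{morphism} $c$ (not just the abstract lattice decomposition) to deform: this is exactly the content one must extract carefully from \cite[Corollary~2.29]{BGL20}, and one has to make sure that in the locally trivial setting the contracted curves — which generate $N^\vee$ — genuinely control the existence of the contraction, rather than some larger face of the movable cone. The rest is the standard period-map bookkeeping, which is why \cite[Proposition~5.22]{BL18} already handled the $\Q$-factorial projective case; the present statement only removes those hypotheses, and the point is that \cite[Corollary~2.29]{BGL20} is robust enough not to need them.
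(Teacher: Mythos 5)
Your overall strategy is the paper's: reduce everything to \cite[Corollary~2.29]{BGL20} plus local Torelli and period-map bookkeeping, the whole point being that this input is robust enough to drop the projectivity and $\Q$-factoriality hypotheses of \cite[Proposition~5.22]{BL18}. The inclusion $p^{-1}(\Def^\lt(\widebar X))\cap\Def^\lt(X)\subseteq\Def^\lt(X,N)$ is fine (and in fact needs only the diagram \eqref{eq defo morphism}, topological triviality of locally trivial families and $\sigma_{X_t}=\pi_t^*\sigma_{\widebar X_{p(t)}}$, not the full force of \cite[Corollary~2.29]{BGL20}), and the differential computation for item (2) is the standard local Torelli argument.

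The one step to press on is the reverse inclusion $\Def^\lt(X,N)\subseteq p^{-1}(\Def^\lt(\widebar X))$. You argue that over the Hodge locus of $N$ the contraction deforms and then conclude ``so $\widebar X_t$ is a locally trivial deformation of $\widebar X$''; but the mere existence of a deformed bimeromorphic morphism $X_t\to\widebar X_t$ onto a primitive symplectic variety does not by itself give local triviality of the family of targets — that is exactly the extra content of local triviality of the \emph{map} (cf. the paper's footnote invoking \cite[Proposition~2.26]{BGL20}), and you are reading \cite[Corollary~2.29]{BGL20} in the direction $\Def^\lt(X,N)\Rightarrow\Def^\lt(\widebar X)$, whereas the paper only extracts from it the opposite, lifting direction: every locally trivial deformation of $\widebar X$ lies in the image of $\Def^\lt(X,N)$. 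With that lifting statement the gap closes as in \cite[Proposition~5.22]{BL18} without ever proving directly that the pushed-down family is locally trivial: the lift gives a classifying section $s:\Def^\lt(\widebar X)\to\Def^\lt(X,N)$ with $p\circ s$ the natural inclusion, both germs are smooth of the same dimension $h^{1,1}(\widebar X)$ (local Torelli and the decomposition \eqref{eq:H2}), so the injective map $s$ is an isomorphism of germs, which yields the missing inclusion in (1) and item (2) simultaneously. So either verify that \cite[Corollary~2.29]{BGL20} really asserts local triviality of the target family over the Hodge locus of $N$, or replace your direct argument by this section-plus-dimension-count step; with that repair your proof agrees with the paper's.
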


\begin{proof}
We do not know whether $\Def(X) \to \Def(\widebar X)$ is surjective if $X$ is nonprojective, but we only need that $\Def^\lt(\widebar X)$ is contained in the image. Thanks to \cite[Corollary~2.29]{BGL20}, it is actually contained in the image of $\Def^\lt(X,N)$. The rest of the proof is basically the same as for  \cite[Proposition~5.22]{BL18}.
\end{proof}

% %-------------------------------------------
% \subsection{Deformations of minimal contracted curves}\label{section deformations minimal curves}
% %-------------------------------------------
% Let us recall the following definition from \cite{AV15} (right before their Corollary~4.5).

% \begin{definition}\label{definition minimal curve}
% Let $c:X \to\widebar X$ be a morphism. Recall that a curve $E$ on $X$ that is contracted to a point under $c$ is called \emph{minimal (for $c$)} if for a fixed Kähler class $\kappa$ the (real) number $\kappa \cap [E] > 0$ is minimal among all curves on $X$. 
% \end{definition}

% Note that Amerik--Verbitsky require $E$ to be minimal among \emph{all} curves in $X$. This seems to be a bit too strong for our purposes which is why we made the notion depend on a contraction. \margincom{C: I actually think their definition is weird.} Also note that a minimal curve is necessarily irreducible and so is every curve numerically equivalent to it.

% OLD VERSION:
% \begin{proposition}
% Let $X$ be a primitive symplectic variety and let $c\,:\,X\to \widebar{X}$ be a relative Picard rank one contraction to a primitive symplectic variety $\widebar{X}$. Let $R\in H_2(X,\Z)$ be the class of an effective minimal contracted curve. Then $R$ deforms over all its Hodge locus inside $\Def^{\lt}(X)$.\margincom{C: do we need Picard rank one here? I don't think so.}
% \end{proposition}

\begin{proposition}\label{proposition contracted curves deform}
Let $X$ be a primitive symplectic variety and let $c\,:\,X\to \widebar{X}$ be a bimeromorphic contraction to a primitive symplectic variety $\widebar{X}$. Let $R \subset X$ be a contracted curve and let $N \subset H^2(X,\Q)$ be as in (\ref{eq:H2}). Then $R$ deforms over all of $\Def^{\lt}(X,N)$.
\end{proposition}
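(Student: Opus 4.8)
The plan is to use Proposition~\ref{prop defo} to spread $c$ into a family of crepant contractions over $T:=\Def^\lt(X,N)$, to localise near the image point $\widebar x:=c(R)$, and to show that near $\widebar x$ this family of contractions is trivial; the statement then drops out. First, by Proposition~\ref{prop defo}(2) the morphism $p$ restricts to an isomorphism $T\to\Def^\lt(\widebar X)$, while part~(1) gives $p(T)\subseteq\Def^\lt(\widebar X)$; so restricting \eqref{eq defo morphism} over $T$ and using $p|_T$ to identify the base with $\Def^\lt(\widebar X)$ one obtains a $T$-morphism $\Phi\colon\scrX|_T\to\widebar{\scrX}$ with $\Phi_0=c$, where $\widebar{\scrX}\to T$ is the universal locally trivial deformation of $\widebar X$. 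Each $\Phi_t$ is a proper bimeromorphic morphism between normal varieties with trivial canonical class, hence crepant, and therefore an isomorphism over the regular locus of $\widebar{\scrX}_t$, since a crepant bimeromorphic morphism onto a terminal — in particular onto a smooth — variety is an isomorphism. (In particular $\Exc(c)\subseteq c^{-1}(\Sing\widebar X)$, so $\widebar x\in\Sing\widebar X$ as $R$ is a curve.)

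Next I would localise. By local triviality of $\widebar{\scrX}\to T$, choose a neighbourhood $\widebar U\ni\widebar x$ and a $T$-biholomorphism $\widebar U\cong\widebar V\times T$ with $\widebar V\subseteq\widebar X$ open and $\widebar x$ corresponding to $(\widebar x,0)$, and let $\gamma\colon T\to\widebar{\scrX}$ be the induced section through $\widebar x$. Set $\scrW:=\Phi^{-1}(\widebar U)$; it is a locally trivial deformation over $T$ of $\scrW_0=c^{-1}(\widebar V)$, and $\Phi$ presents it as a family over $T$ of crepant bimeromorphic morphisms onto the \emph{fixed} target $\widebar V$, equal to $c\colon c^{-1}(\widebar V)\to\widebar V$ over $0$. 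The key claim I would establish is that $\scrW\to\widebar V\times T$ is $T$-isomorphic to the constant family $\bigl(c^{-1}(\widebar V)\xrightarrow{\ c\ }\widebar V\bigr)\times T$. Granting it, $\scrF:=\Phi^{-1}(\gamma(T))$ becomes $T$-isomorphic to $c^{-1}(\widebar x)\times T$ as a subspace of $\scrX|_T$, so $R\times T\subseteq\scrF\subseteq\scrX|_T$ is a flat family of closed subspaces of the fibres of $\scrX|_T\to T$ restricting to $R$ over $0$ — i.e.\ $R$ deforms over all of $\Def^\lt(X,N)$.

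To prove the key claim I would feed in two facts. First, $\Phi$ is already an isomorphism over the constant open set $(\widebar V\setminus\Sing\widebar V)\times T$. Second — and here is where the hypothesis $T=\Def^\lt(X,N)$ enters — the class of every curve $R'$ contracted by $c$ into $\widebar x$ has $q_X$-dual in $N$: for any $\beta\in H^2(\widebar X,\Q)$ one has $q_X\bigl((R')^\vee,c^*\beta\bigr)=R'\cdot c^*\beta=(c_*R')\cdot\beta=0$, so $(R')^\vee\in N$ by the $q_X$-orthogonality of \eqref{eq:H2}, and $N$ stays of type $(1,1)$ over all of $T$ by the very definition of $\Def^\lt(X,N)$. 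I would then combine these with a local version of the deformation-matching behind Proposition~\ref{prop defo} — compare \cite[Prop.~5.22]{BL18} and \cite[Cor.~2.29]{BGL20} — to the effect that locally trivial deformations of $c^{-1}(\widebar V)$ along which the classes in $N$ remain $(1,1)$ are identified with locally trivial deformations of $\widebar V$, the latter being rigid inside the product $\widebar V\times T$; this would force $\scrW$ to be the trivial deformation and $\Phi$ the constant family of $c$. Alternatively one could run a relative $\Q$-factorial terminalisation over $\widebar V\times T$ in the spirit of \cite{BCHM10} and \cite{BGL20}.

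The hard part is precisely this key claim. Local triviality of $\scrX|_T\to T$ and of $\widebar{\scrX}\to T$ \emph{separately} does not force the morphism $\Phi$ between them to be a trivial family of morphisms — a curve in a locally trivial family need not deform over the whole base, as a $(-2)$-curve on a $K3$ shows — so one genuinely has to exploit that every contracted curve class remains Hodge over $T$. I would keep in reserve the alternative of working with the irreducible component $\mathcal D$ of the relative Douady space $\operatorname{Douady}(\scrX|_T/T)$ through $[R]$: since $(\Phi_t)_*[R]=0$ for every $t$ (because $R^\vee\in N$ is $q_X$-orthogonal to the constant subspace $\Phi_t^*H^2(\widebar{\scrX}_t,\Q)$), every curve parametrised by $\mathcal D$ is $\Phi_t$-contracted, and a properness argument confines these curves to a compact subset over $T$; but to conclude that $\mathcal D\to T$ is dominant one again needs that the local contraction at $\widebar x$ does not degenerate — the same key claim.
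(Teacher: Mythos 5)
Your overall route is the same as the paper's: restrict diagram \eqref{eq defo morphism} over $\Def^\lt(X,N)\cong\Def^\lt(\widebar X)$ (Proposition~\ref{prop defo}) to obtain a family of contractions deforming $c$, and then conclude that $R$ deforms because, locally near $\widebar x=c(R)$, this family of morphisms is constant. But the step you yourself single out as the ``key claim'' --- that $\scrW\to\widebar V\times T$ is $T$-isomorphic to the constant family $\bigl(c^{-1}(\widebar V)\to\widebar V\bigr)\times T$ --- is exactly where the content of the proposition lies, and you do not prove it. You rightly observe that local triviality of $\scrX|_T\to T$ and of $\widebar\scrX\to T$ separately does not force the morphism between them to be a constant family; your proposed remedies, however, remain gestures: a ``local version of the deformation-matching behind Proposition~\ref{prop defo}'', a rigidity assertion for deformations of $\widebar V$ ``inside the product'' (unjustified --- local triviality of $\widebar\scrX$ is a statement about germs, not a global product structure compatible with $\Phi$), or ``running a relative $\Q$-factorial terminalisation'', none of which is carried out. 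The Douady-space variant you keep in reserve runs into the same missing input, as you note. So the argument is incomplete precisely at its central step.

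What closes this gap in the paper is not a new argument but a citation: \cite[Corollary~2.29]{BGL20} (together with Proposition~2.26 and Definition~2.25 of op.\ cit.) says that the morphism $\scrX\to\widebar\scrX$ obtained by deforming $c$ along locally trivial deformations of $\widebar X$ is itself \emph{locally trivial as a morphism}, i.e.\ locally over the base and locally on the target it is the product of $c$ with the base --- which is literally your key claim; the deformation of $R$ then follows by taking the preimage of a local section through $\widebar x$, exactly as in your second paragraph. You even point to this reference, but only as an analogy (``compare''), and instead treat the statement as something to be re-derived from the Hodge-theoretic constancy of $N$; note that the constancy of $N$ is already built into the setup (the family lives over $\Def^\lt(X,N)$), so the genuinely new input is the local-triviality-of-morphisms result from \cite{BGL20}, not a further Hodge-locus argument. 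Either invoke that result directly, as the paper does, or supply an actual proof of the key claim; as written, the proposal identifies the right statement but leaves it unproven.
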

\begin{proof}
By Proposition~\ref{prop defo}, $\Def^\lt(X,N)$ is isomorphic to $\Def^\lt(\widebar X)$. In particular, diagram \eqref{eq defo morphism} gives rise via restriction to a morphism $\scrX \to \widebar \scrX$ over $\Def^\lt(\widebar X)$ which is a deformation of $c$. By \cite[Corollary~2.29]{BGL20}, even the map $\scrX \to\widebar \scrX$ is  locally trivial\footnote{Clearly, the spaces $\sX, \widebar \sX$ are locally trivial over $\Def^\lt(\widebar X)$. But also the morphism $\sX \to \widebar \sX$ is locally trivial over $\Def^\lt(\widebar X)$ in the sense of \cite[Definition~2.25]{BGL20}. This follows from Proposition~2.26 of op. cit. We will however not need this in what follows.} as a map.  Hence, the curve $R$ deforms. 
\end{proof}

It is worthwhile pointing out that from the local triviality of $\scrX$ resp. $\widebar \scrX$ over $\Def^\lt(\widebar X)$  it is clear that the cycle of $R$ deforms over $\Def^\lt(\widebar X)$. The conclusion of the proposition is however stronger in the relative Picard rank one case.  

\begin{corollary}\label{corollary curve deforms}
Let $X$ be a primitive symplectic variety and let $c\,:\,X\to \widebar{X}$ be a bimeromorphic contraction of relative Picard rank one to a primitive symplectic variety $\widebar{X}$. Let $R \subset X$ be a contracted curve. Then $R$ deforms over all of its Hodge locus.
\end{corollary}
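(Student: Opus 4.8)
The plan is to show that, under the relative Picard rank one hypothesis, the Hodge locus of $R$ inside $\Def^\lt(X)$ coincides with the sublocus $\Def^\lt(X,N)$ of Proposition~\ref{prop defo}, and then simply to invoke Proposition~\ref{proposition contracted curves deform}.

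First I would locate the class $[R]^\vee\in H^2(X,\Q)$ (notation of Remark~\ref{rmk:BBduality}) inside the summand $N$ of the decomposition \eqref{eq:H2}. Since $R$ is contracted by $c$ we have $c_*[R]=0$, so for every $\beta\in H^2(\widebar X,\Q)$ the defining property of $[R]^\vee$ together with the projection formula gives
\[
q_X([R]^\vee,c^*\beta)\;=\;[R]\cdot c^*\beta\;=\;(c_*[R])\cdot\beta\;=\;0.
\]
As \eqref{eq:H2} is a $q_X$-orthogonal decomposition and $q_X$ is non-degenerate, $N$ is exactly the orthogonal complement of $c^*H^2(\widebar X,\Q)$, so $[R]^\vee\in N$; moreover $[R]^\vee\neq 0$ since pairing with a Kähler class $\omega$ on $X$ yields $q_X([R]^\vee,\omega)=\int_R\omega>0$.

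Next I would use the relative Picard rank one assumption to conclude $\rank N=1$: intersecting \eqref{eq:H2} with $\NS(X)_\Q$ and using that $c^*$ is an injective morphism of Hodge structures gives $\NS(X)_\Q=c^*\NS(\widebar X)_\Q\oplus N$ (this is the content of \cite[Lemma~5.21]{BL18}), hence $\rank N=\rho(X)-\rho(\widebar X)=1$. Combined with $[R]^\vee\in N\setminus\{0\}$ this forces $N=\Q\cdot[R]^\vee$. Consequently the condition defining $\Def^\lt(X,N)$ — that every class of $N$ remain of type $(1,1)$ — is equivalent to the single condition that $[R]^\vee$ remain of type $(1,1)$; via the identification of Hodge classes in $H^2$ with $(1,1)$-classes and the duality $H_2(X,\Q)\cong H^2(X,\Q)$, this says precisely that $\Def^\lt(X,N)$ is the Hodge locus of $R$ in $\Def^\lt(X)$. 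Proposition~\ref{proposition contracted curves deform} then asserts that $R$ deforms over all of $\Def^\lt(X,N)$, which we have just identified with its Hodge locus, completing the argument.

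I do not expect any serious obstacle: the statement is essentially a translation of Proposition~\ref{proposition contracted curves deform} through the dictionary between $\Def^\lt(X,N)$ and Hodge loci, and the only step that is not pure bookkeeping with \eqref{eq:H2} is the equality $\rank N=\rho(X/\widebar X)$, which is immediate from the Hodge-theoretic description of $N$. It is worth noting that the rank one hypothesis is essential here: when $\rank N\geq 2$ the Hodge locus of a single $c$-exceptional curve is in general strictly larger than $\Def^\lt(X,N)$, and the conclusion can fail.
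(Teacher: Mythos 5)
Your proposal is correct and follows essentially the same route as the paper: the paper's proof likewise observes that, by the relative Picard rank one hypothesis, the lattice $N$ of \eqref{eq:H2} is spanned by $[R]^\vee$, so that via Proposition~\ref{prop defo} the Hodge locus of $R$ is exactly $\Def^\lt(X,N)\cong\Def^\lt(\widebar X)$, and then invokes Proposition~\ref{proposition contracted curves deform}. Your write-up merely makes explicit (projection formula, nonvanishing against a Kähler class, $\rank N=\rho(X)-\rho(\widebar X)=1$) what the paper asserts in one line.
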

\begin{proof}
As $c$ has Picard rank one, it follows from Proposition~\ref{prop defo} that the Hodge locus of $R$ in $\Def^\lt(X)$ is isomorphic to $\Def^\lt(\widebar X)$. In fact, the lattice $N$ from \eqref{eq:H2} is spanned by the dual $[R]^\vee \in H^2(X,\Q)$ of the class of $R$. Then the claim follows from Proposition~\ref{proposition contracted curves deform}.
\end{proof}

%-------------------------------------------
\subsection{Locally trivial monodromy operators}\label{section monodromy}
%-------------------------------------------

Let $f: \scrX \to S$ be a locally trivial family of primitive symplectic varieties. By \cite[Lemma~2.4]{BL21}, the torsion free part of the second cohomology of the fibers of $f$ forms a local system so that we have a notion of parallel transport. This is sufficient for our purposes, but it is convenient to use the following strengthening. By \cite[Proposition~5.1]{AV21}, a locally trivial deformation is even topologically trivial. Therefore, if $\gamma$ is a path in $S$ connecting two points $s_1, s_2 \in S$, we consider the parallel transport 
\[
p_\gamma:H^2(X_{s_1},\Z) \to H^2(X_{s_2},\Z)
\]
where $X_{s_i}=f^{-1}(s_i),\ i=1,2$, and refer to it as a \emph{locally trivial parallel transport operator}. Such operators preserve the Bogomolov--Beauville--Fujiki form. 

\begin{definition}\label{definition monodromy group}
Let $X$ be a primitive symplectic variety. By varying $\gamma$ over all loops in all possible locally trivial families, the corresponding parallel transport operators $p_\gamma$ on $H^2(X,\Z)$ form the \emph{locally trivial monodromy group}. We denote it by $\Mt(X) \subset \O(H^2(X,\Z),q_X)$. Note that the monodromy group is contained in the orthogonal group $\O(H^2(X,\Z),q_X)$ as parallel transport preserves the BBF form. We denote by $\Mth(X) \subset \Mt(X)$ the group of \emph{monodromy Hodge isometries}, that is, the subgroup of all those isometries that preserve the Hodge structure. 
\end{definition}

If $b_2(X)\neq 4$, the monodromy group has finite index in the orthogonal group of $H^2(X,\Z)_\tf$ by \cite[Theorem~8.2]{BL18}.

%-------------------------------------------
\subsection{Prime Exceptional Divisors} \label{section prime exceptional}
%-------------------------------------------

The bimeromorphic geometry of a primitive symplectic variety will be in large parts determined by the geometry of prime exceptional divisors. In the following sections we will try to transport as much as possible from what is known for irreducible symplectic manifolds to primitive symplectic varieties. Here we simply give the definition. 

\begin{definition}\label{definition prime exceptional divisor}
Let $X$ be a primitive symplectic variety. A prime Weil divisor $E \subset X$ is called a \emph{prime exceptional divisor} if it is $\Q$-Cartier and $q_X(E) < 0$.
\end{definition}
In particular, a prime exceptional divisor $E$ has a cohomology class $[E]\in H^2(X,\Q)$.
Recall that by \cite[Theorem 1.2, item (1)]{LMP21} (which is essentially contained in \cite[Theorem A and its proof]{BBP13} and \cite[Theorem 3.3]{Dru11}) a prime exceptional divisor on a projective $\Q$-factorial primitive symplectic variety can be contracted on a birational model (which is a locally trivial deformation of the initial variety). 
This explains why a prime exceptional divisor on a projective primitive symplectic variety is uniruled.

%-------------------------------------------
%-------------------------------------------
%-------------------------------------------
\section{Reflections in prime exceptional divisors}\label{section reflections}
%-------------------------------------------
%-------------------------------------------
%-------------------------------------------

Let $X$ be a projective primitive symplectic variety. It is natural to ask whether reflections in prime exceptional divisors $E \subset X$ are locally trivial monodromy operators. This has been shown by Markman \cite{Mar10galois} for irreducible symplectic manifolds. To obtain the result, we blend his approach with the one adopted by Namikawa in \cite{Nam10} which seems technically less involved for singular varieties than certain parts of Markman's argument. We will prove this under the additional hypothesis that $X$ has terminal singularities. %, see Theorem~\ref{theorem galois cover}. 
Concerning the singularities, this result is actually optimal. Already for K3 surfaces one obtains a counterexample (see  Example~\ref{example:counterex} for a more detailed discussion). 
%As it is however a bit lengthy and many details have to be formulated in a slightly different way, we chose to spell out the proof for the reader's convenience.

A key ingredient in Markman's argument is the birational contractibility of prime exceptional divisors on projective irreducible symplectic manifolds which in our case is replaced by \cite[Theorem 1.2]{LMP21}. Since the property of being a monodromy operator is of course invariant under (locally trivial) deformations we will assume in this section that given a prime exceptional divisor $E\subset X$, we have a birational contraction $X\to \widebar X$ of $E$. We want to show that the induced map $\Def^\lt(X) \to \Def(\widebar X)$ from the space of locally trivial deformations of $X$ is a Galois cover onto its image. Note that by diagram (\ref{eq defo morphism}) any small deformation of $X$ induces one of $\widebar X$ but the latter need not be locally trivial even if the first is.

%-------------------------------------------
\subsection{Folded Dynkin Diagrams}\label{section folded dynkin}
%-------------------------------------------

We consider the following set-up. Let $\pi:X \to \widebar X$ be a bimeromorphic morphism between primitive symplectic varieties and suppose that $X$ has terminal singularities. We denote by $\Sigma\subset \widebar X$ the singular locus. According to Kaledin \cite{Kal06a} (see also \cite[Corollary~1]{Nam01} and \cite[Theorem~3.4]{BL18}) the locus $\Sigma$ is stratified into symplectic varieties. Maximal strata of codimension $2$ parametrize ADE singularities. More precisely, there is a closed subvariety $\Sigma_0\subset \Sigma$ of $\codim_{\widebar X} \Sigma_0\geq 4$
such that for each $p\in \Sigma\setminus \Sigma_0$ there is an isomorphism of analytic germs
\begin{equation}\label{eq kaledin germs}
(\widebar X,p) \isom (\widebar S,0) \times \mathbb (\C^{2n-2},0)
\end{equation}
where $(\widebar S,0)$ is an ADE surface singularity. 
The subvariety $\Sigma_0\subset \widebar X$ is  called the {\it dissident locus}. Let $\mathcal B$ be the set of connected components of 
$\Sigma\setminus \Sigma_0$ and for each connected component $B$ we denote by $W_B$ the Weyl group of the folded root system (cf. \cite[Section 3]{Mar10galois} and \cite[beginning of Section 1]{Nam10}). Let us recall that each $B\in \sB$ determines a root system $\Phi_B$ (the one of $\widebar S$) and a graph automorphism $\tau$ of the Dynkin diagram of $\Phi_B$ (coming from the monodromy of fiber components of $X\to \widebar{X}$ over $B$). We recall that
\begin{equation}\label{eq weyl group}
    W_B=\{ w \in W(\Phi_B) \mid \tau w \tau^{-1}= w\}
\end{equation}
where $W(\Phi_B)$ is the Weyl group associated to $\Phi_B$.

%-------------------------------------------
\subsection{Covering between Kuranishi spaces}
%-------------------------------------------

Let $X$ be a primitive symplectic variety and $\pi:X \to \widebar X$ a bimeromorphic contraction. Recall from Section \ref{section deformations of contractions} that deformations of $X$ induce deformations of $\widebar X$, more precisely, we have the commutative diagram \eqref{eq defo morphism}. We denote by $\Def(\widebar X,\pi) \subset \Def(\widebar X)$ the image of $\Def^\lt(X)$ under $p$. Let us consider the restriction 
\begin{equation}\label{eq deflt covering contraction}
p:\Def^\lt(X)\to \Def(\widebar{X},\pi).
\end{equation}
 The proof of the following result is almost literally as in \cite[Proposition~5.24]{BL21}, we include it for convenience.

\begin{lemma}\label{lemma deflt finite}
Let $\pi:X \to \widebar X$ be a bimeromorphic map between primitive symplectic varieties. Then the restriction $p:\Def^\lt(X) \to \Def(\widebar X)$ is finite.
\end{lemma}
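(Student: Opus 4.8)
The plan is to prove that $p$ is finite by the standard germ‑theoretic criterion: a morphism of complex space germs $f\colon (A,a)\to(B,b)$ admits a representative which is finite (i.e.\ proper with finite fibres) as soon as the fibre $f^{-1}(b)$ is $0$‑dimensional at $a$, equivalently as soon as $f^{-1}(b)=\{a\}$ as a germ — this follows from the finite mapping theorem. So it suffices to show that the fibre of $p$ over the base point $0\in\Def(\widebar X)$ (the point parametrizing $\widebar X$ itself) consists of the base point $0\in\Def^\lt(X)$ only.

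\textbf{Main step.} All the geometric content is carried by Proposition~\ref{prop defo}. Since the trivial deformation of $\widebar X$ is in particular locally trivial, $0\in\Def^\lt(\widebar X)\subseteq\Def(\widebar X)$; and by construction the $p$ of the lemma is the restriction to $\Def^\lt(X)$ of the morphism appearing in~\eqref{eq defo morphism}. Hence, with $N$ the lattice of~\eqref{eq:H2},
\[
p^{-1}(0)=p^{-1}(0)\cap\Def^\lt(X)\ \subseteq\ p^{-1}\!\big(\Def^\lt(\widebar X)\big)\cap\Def^\lt(X)\ =\ \Def^\lt(X,N)
\]
by Proposition~\ref{prop defo}(1). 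By Proposition~\ref{prop defo}(2) the further restriction $p\colon\Def^\lt(X,N)\to\Def^\lt(\widebar X)$ is an isomorphism of germs, so its fibre over $0$ is $\{0\}$; putting the two together,
\[
p^{-1}(0)=\{\,t\in\Def^\lt(X,N)\ :\ p(t)=0\,\}=\{0\}.
\]
By the criterion recalled above, $p$ is finite; in particular it is finite onto its image $\Def(\widebar X,\pi)$ from~\eqref{eq deflt covering contraction}.

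\textbf{Main obstacle.} Granting Proposition~\ref{prop defo}, which is already available, the argument is essentially formal, and I do not anticipate real difficulty — the only ingredient used as a black box is the finiteness criterion for germs. Were one to avoid it, the alternative is more hands‑on: a positive‑dimensional subgerm $C\subseteq\Def^\lt(X)$ collapsed by $p$ would, via~\eqref{eq defo morphism}, produce a nonisotrivial locally trivial — hence topologically trivial by \cite[Proposition~5.1]{AV21} — family $\scrX|_{C}\to C$ sitting over the constant family $\widebar X\times C$ through a relative bimeromorphic contraction deforming $\pi$, and one would then have to argue that such a family must be isotrivial, i.e.\ that the contraction is rigid in families. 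That rigidity is exactly what Proposition~\ref{prop defo} encodes Hodge‑theoretically, which is why the first route is the cleanest; I expect the write‑up to follow it, mirroring the proof of \cite[Proposition~5.24]{BL21}.
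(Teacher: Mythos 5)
Your argument is correct, but it takes a genuinely different route from the paper. The paper's proof is a two-case argument: if $X$ is projective, finiteness of $p$ is quoted from Namikawa \cite[Theorem~1]{Nam06}; if $\widebar X$ is non-projective, a positive-dimensional fibre would yield a non-constant locally trivial one-parameter family all of whose members map bimeromorphically to the fixed $\widebar X$, and then density of projective fibres \cite[Corollary~6.10]{BL18} together with \cite[Proposition~5]{Nam02} forces $\widebar X$ to be Moishezon, hence projective, a contradiction — this mirrors \cite[Proposition~5.24]{BL21} and is essentially the ``hands-on'' alternative you sketch, except that the rigidity you worry about is replaced by a projectivity argument. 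Your route instead makes the lemma a formal consequence of Proposition~\ref{prop defo}: since the base point lies in $\Def^\lt(\widebar X)$, the fibre $p^{-1}(0)\cap\Def^\lt(X)$ is contained in $\Def^\lt(X,N)$ by item (1), and injectivity of $p$ on $\Def^\lt(X,N)$ from item (2) reduces it to the single point $0$, so the standard local finiteness criterion for holomorphic map germs (a germ is finite once the base point is isolated in its fibre) applies. This is shorter and avoids both Namikawa's finiteness theorem and the density-of-projective-fibres argument, at the price of resting entirely on Proposition~\ref{prop defo}, whose proof the paper itself only sketches via \cite{BGL20} and \cite[Proposition~5.22]{BL18}; since that proposition precedes the lemma in the paper and its proof does not invoke the finiteness of $p$, there is no circularity, and your argument stands as a legitimate, more formal alternative to the paper's proof.
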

\begin{proof}
If $X$ is projective, then $p$ is finite by Namikawa's work \cite[Theorem~1]{Nam06}. Suppose that $\widebar X$ is non-projective and $p$ is not finite. Then there is a one-parameter family $\sX \to \widebar X\times \Delta$ over a disk $\Delta$ such that $\sX \to \Delta$ is locally trivial, the classifying map $\Delta \to \Def^\lt(X)$ is not constant and the induced morphism $X_t\to \widebar{X}$ is birational for all $t\in \Delta$.  By \cite[Corollary~6.10]{BL18}, there is $t\in \Delta$ such that $\sX_t$ is projective. As $\pi:\sX_t\to \widebar X$ is birational,  $\widebar X$ is Moishezon and projective by \cite[Proposition~5]{Nam02}, contradiction.
\end{proof}

\begin{lemma}\label{lemma generic fiber isomorphic}
In the situation of Section \ref{section deformations of contractions}, let $t\in \Def^\lt(X)$ be a very general point. Then the corresponding contraction $\pi_t:\scrX_t \to \widebar \scrX_{p(t)}$ is an isomorphism. 
\end{lemma}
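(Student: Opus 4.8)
The plan is to show that for a very general $t\in \Def^\lt(X)$ the contraction $\pi_t:\scrX_t\to\widebar\scrX_{p(t)}$ contracts nothing, so that by properness and bimeromorphy it must be an isomorphism. By Proposition~\ref{prop defo}, applied to $\pi$ and the negative definite lattice $N\subset H^2(X,\Q)$ from \eqref{eq:H2}, the image $\Def(\widebar X,\pi)=p(\Def^\lt(X))$ is identified with the locus $\Def^\lt(X,N)$ where all classes of $N$ stay of type $(1,1)$; moreover $p:\Def^\lt(X,N)\to \Def^\lt(\widebar X)$ is an isomorphism. Since $\Def^\lt(X,N)$ is a proper closed analytic subspace of $\Def^\lt(X)$ whenever $N\neq 0$ (because $N$ is negative definite, it is not contained in $H^{1,1}$ of a generic locally trivial deformation — the positive part of the Hodge structure moves and must become non-orthogonal to some class of $N$), a very general $t\in\Def^\lt(X)$ does not lie in $\Def^\lt(X,N)$. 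Wait — this is not quite the right reduction, since Lemma~\ref{lemma generic fiber isomorphic} is about the map $\pi_t$ for $t$ generic \emph{in all of $\Def^\lt(X)$}, where $\widebar\scrX_{p(t)}$ is the deformation of $\widebar X$ induced by $p$, which need not be locally trivial. So instead I would argue as follows.

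First I would record that diagram~\eqref{eq defo morphism}, restricted to $\Def^\lt(X)$, produces over each $t\in\Def^\lt(X)$ a proper bimeromorphic morphism $\pi_t:\scrX_t\to\widebar\scrX_{p(t)}$, and that the locus $Z\subset\Def^\lt(X)$ where $\pi_t$ fails to be an isomorphism is closed analytic (the non-isomorphism locus of a proper morphism in a family is closed; alternatively use that $\scrX_t\to\widebar\scrX_{p(t)}$ is an isomorphism iff its exceptional locus is empty, and the exceptional locus forms a closed subset of the total space whose image in the base is closed by properness). Hence it suffices to show $Z\neq\Def^\lt(X)$, i.e. that $\pi_t$ is an isomorphism for at least one (hence a very general) $t$.

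For this I would invoke the key input that $\pi:X\to\widebar X$ is the contraction of the prime exceptional divisor(s), or more intrinsically that the exceptional divisor $E$ of $\pi$ has $[E]\in N$ with $q_X(E)<0$. By Proposition~\ref{proposition contracted curves deform}, a contracted curve $R$ deforms over all of $\Def^\lt(X,N)$, and the class $[R]$ pairs nontrivially with $N$; conversely, over a point $t\in\Def^\lt(X)\setminus\Def^\lt(X,N)$ no nonzero class of $N$ remains of type $(1,1)$, so in particular $[E]$ is not algebraic on $\scrX_t$ and $[R]^\vee$ is not $(1,1)$, hence the contracted curves cannot survive as curve classes and $\pi_t$ cannot contract a divisor. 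Combined with the fact that $\pi_t$ is bimeromorphic and $\scrX_t$, $\widebar\scrX_{p(t)}$ are normal with $\widebar\scrX_{p(t)}$ having only the mild singularities coming from $\widebar X$ (terminal, hence $\pi_t$, if not an isomorphism, would have to contract a divisor, as $X$ is $\Q$-factorial in the relevant situation — or, without $\Q$-factoriality, one uses that a nontrivial bimeromorphic morphism between symplectic varieties with terminal singularities has divisorial exceptional locus by the symplectic analogue of the negativity lemma), we conclude $\pi_t$ is an isomorphism. Finally, since $\Def^\lt(X,N)$ is a proper closed subspace of $\Def^\lt(X)$ (as $N\neq 0$ and $N$ is negative definite, a general deformation of the Hodge structure makes $q_X|_{H^{1,1}}$ meet $N$ nontrivially by the surjectivity of the local period map onto a domain of dimension $h^{1,1}$), the complement is dense and a very general $t$ lies in it.

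The main obstacle I anticipate is the passage, over a general $t$, from ``no class of $N$ is of type $(1,1)$'' to ``$\pi_t$ is an isomorphism'' — in other words, ruling out that $\pi_t$ contracts something even though the original exceptional class has become transcendental. The clean way to handle this is to use that $\Def^\lt(X,N)=p^{-1}(\Def^\lt(\widebar X))\cap\Def^\lt(X)$ (Proposition~\ref{prop defo}(1)), so for $t\notin\Def^\lt(X,N)$ the fiber $\widebar\scrX_{p(t)}$ is \emph{not} a locally trivial deformation of $\widebar X$; but the geometry of $\pi_t$ is constrained by the universal property of diagram~\eqref{eq defo morphism} together with \cite[Proposition~11.4]{KM92}, forcing the exceptional locus of $\pi_t$ to be a locally trivial deformation of that of $\pi$, which would in turn force $t\in\Def^\lt(X,N)$ — a contradiction. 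I would make this precise by an argument on the relative cycle space / using that the exceptional divisor $E$ deforms sideways in $\scrX\to\Def^\lt(X)$ exactly over $\Def^\lt(X,N)$ (combine Proposition~\ref{proposition contracted curves deform} with the fact that $[E]$ stays $(1,1)$ precisely on that locus, $[E]$ being $\Q$-effective and rigid in its cohomology class since $q_X(E)<0$). This pins down $Z=\Def^\lt(X,N)\subsetneq\Def^\lt(X)$ and completes the proof.
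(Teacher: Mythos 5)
There is a genuine gap, and one step of your outline is actually false. You assert that for $t\in\Def^\lt(X)\setminus\Def^\lt(X,N)$ \emph{no} nonzero class of $N$ remains of type $(1,1)$; this fails as soon as $\rank N\geq 2$, since $\Def^\lt(X,N)$ is the locus where \emph{all} of $N$ stays $(1,1)$, while the Hodge locus of a single class of $N$ is in general a strictly larger (divisorial) subset. Over such intermediate points a partial contraction can survive, so your tentative identification of the non-isomorphism locus $Z$ with $\Def^\lt(X,N)$ need not hold; $Z$ is rather expected to be a countable union of Hodge loci. More importantly, the decisive step --- ruling out that $\pi_t$ contracts anything once the relevant classes have become transcendental --- is exactly the step you leave as a plan (``an argument on the relative cycle space'', ``the exceptional locus of $\pi_t$ would have to be a locally trivial deformation of that of $\pi$'') and is not carried out. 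Finally, you import hypotheses that are not available in the setting of Section~\ref{section deformations of contractions}: the lemma concerns an arbitrary proper bimeromorphic morphism between primitive symplectic varieties, with no terminality or $\Q$-factoriality and no assumption that $\Exc(\pi)$ is a divisor (the paper explicitly stresses, after Theorem~\ref{theorem galois cover without group}, that no such hypotheses are needed), so appeals to prime exceptional divisors, to ``$\pi_t$ would have to contract a divisor'', or to a negativity-lemma-type statement for terminal symplectic varieties are not legitimate here.

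The intended argument is much shorter and sidesteps all of this by using ``very general'' directly. At a very general $t$ the period point is very general, so by the local Torelli theorem for primitive symplectic varieties (\cite[Proposition~5.5]{BL18}) the fiber $\scrX_t$ has Picard number zero, i.e. $H^{1,1}(\scrX_t)\cap H^2(\scrX_t,\Q)=0$. By the BBF-duality of Remark~\ref{rmk:BBduality}, the class of any curve in $\scrX_t$ would give a nonzero rational $(1,1)$-class, so $\scrX_t$ contains no curves at all. But if $\pi_t$ were not an isomorphism its exceptional locus would be nonempty and covered by curves (positive-dimensional fibers of a proper bimeromorphic morphism of normal complex varieties), a contradiction. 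No identification of $Z$, no deformation theory of $E$ or of the contracted curves, and no structure theory of the contraction is required; this is also why the statement holds without any terminality, projectivity, or $\Q$-factoriality assumptions.
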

\begin{proof}
If $\pi_t$ is an isomorphism there is nothing to prove. Let us assume that the exceptional locus of $\pi_t$ is non-empty. By the local Torelli theorem \cite[Proposition~5.5]{BL18}, the variety $\scrX_t$ has Picard number zero. Using the BBF form as in Remark \ref{rmk:BBduality}, we see that $\scrX_t$ cannot contain any curves. The exceptional locus of $\pi_t:\scrX_t\to\widebar \scrX_t$ is however covered by curves if it is non-empty. Thus, the latter must hold and $\pi_t$ is an isomorphism.
\end{proof}

%We now prove the ``first half'' of Theorem~\ref{theorem galois cover}, that is, the Galois-ness of $p$. 
Recall from \cite[Theorem~4.7]{BL18} that the subspace $\Def^\lt(X) \subset \Def(X)$ is smooth of dimension $h^{1,1}(X)$. In particular, $p:\Def^\lt(X)\to \Def(\widebar X,\pi)$ factors through the normalization $D$ of $\Def(\widebar X,\pi)$.

\begin{theorem}\label{theorem galois cover without group}
Let $\pi:X \to \widebar X$ be a bimeromorphic morphism between primitive symplectic varieties. Then the induced morphism $q:\Def^\lt(X)\to D$ onto the normalization $D \to \Def(\widebar X,\pi)$ is a finite Galois cover. Moreover, the Galois group acts on $H^2(X,\Z)$ via locally trivial Hodge monodromy operators. The action is trivial on  $H^{2,0}(X)$ and faithful on $H^{1,1}(X)$.
\end{theorem}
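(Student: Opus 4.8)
The plan is to combine the geometric picture from the theory of simultaneous resolutions of ADE families (à la Brieskorn--Slodowy, adapted by Namikawa \cite{Nam10}) with the deformation-theoretic comparison already available in the excerpt. First I would reduce to a local/global statement along the codimension-two strata: by the Kaledin stratification \eqref{eq kaledin germs}, away from the dissident locus $\Sigma_0$ the morphism $\pi:X \to \widebar X$ looks, transversally, like a simultaneous (partial) resolution of a family of ADE surface singularities, and the deformation theory of $\widebar X$ in the locally trivial direction is controlled stratum-by-stratum by $B \in \sB$. The key input is that $\Def^\lt(X)$ is smooth of dimension $h^{1,1}(X)$ by \cite[Theorem~4.7]{BL18}, that $p:\Def^\lt(X) \to \Def(\widebar X)$ is finite by Lemma~\ref{lemma deflt finite}, and that over the very general point of $\Def^\lt(X)$ the contraction $\pi_t$ is an isomorphism by Lemma~\ref{lemma generic fiber isomorphic}. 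From the last fact, $q:\Def^\lt(X) \to D$ is generically injective; being finite and dominant onto a normal target, and with smooth (hence normal) source, it follows by Zariski's main theorem / the theory of finite morphisms between normal varieties that $q$ is finite and that the function field extension $\C(\Def^\lt(X))/\C(D)$ is purely inseparable -- but in characteristic zero this forces $q$ to be birational, so one must be more careful: in fact I expect the correct statement is that $q$ is finite and that its Galois closure is obtained by the Weyl group action, and the "generically iso" fact only says the cover is connected. So the real content is to exhibit the deck transformations.

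The key step is therefore to produce the Galois group as $\Pi_{B \in \sB} W_B$ acting on $\Def^\lt(X)$ over $D$. For this I would use the identification of $\Def^\lt(X)$ near a point with (an open subset of) the ambient $H^1(X,T_X) \cong H^{1,1}(X)$, and the period map: by the local Torelli theorem \cite[Proposition~5.5]{BL18} the locally trivial period map is a local isomorphism onto an open subset of the period domain attached to $H^2(X,\Z)$, and similarly for $\widebar X$ using the sublattice $c^*H^2(\widebar X,\Q)$ from \eqref{eq:H2}. Under these period identifications, $\Def^\lt(X) \to \Def^\lt(\widebar X)$ becomes (locally) the restriction map of periods from the period domain of $H^2(X)$ to that of the sublattice $c^*H^2(\widebar X) = N^\perp$, and $D = \Def(\widebar X,\pi)$ corresponds to periods that extend to $H^2(\widebar X)$. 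The fibers of this restriction are exactly orbits of the group of isometries of $H^2(X,\Z)$ that act trivially on $N^\perp$ and fix the symplectic class -- and Namikawa's analysis of the folded root system (via the transversal ADE picture and the monodromy automorphism $\tau$ producing the folding \eqref{eq weyl group}) identifies this group with $\Pi_{B \in \sB} W_B$. That each $w \in W_B$ is realized by an actual locally trivial monodromy operator is the Brieskorn--Slodowy-type statement: deforming $\widebar X$ within $\Def(\widebar X,\pi)$ around a loop that permutes the exceptional components over the stratum $B$ induces the reflection, and this is where one genuinely uses that the family of ADE singularities along $B$ has a simultaneous resolution after base change by the Weyl cover. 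The statements that the action is trivial on $H^{2,0}(X)$ and faithful on $H^{1,1}(X)$ then follow because $W_B$ acts trivially on $N^\perp$ (which contains $H^{2,0} \oplus H^{0,2}$, being $q$-positive) hence on $\sigma$, and faithfully on $N \subset H^{1,1}$ since a nontrivial element of a Weyl group moves some root.

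Concretely, the steps in order: (1) record that $q:\Def^\lt(X)\to D$ is finite (Lemma~\ref{lemma deflt finite}) and surjective by definition of $\Def(\widebar X,\pi)$ and its normalization; (2) via the period maps for $X$ and $\widebar X$ and the orthogonal decomposition \eqref{eq:H2}, identify the fiber of $q$ over a point with an orbit of $G:=\{g \in \O(H^2(X,\Z),q_X) : g|_{N^\perp}=\id\}$, using Proposition~\ref{prop defo} to control the locally trivial locus; (3) invoke Namikawa's computation \cite[\S1]{Nam10} together with the folded root system setup of \S\ref{section folded dynkin} to identify $G$ with $\Pi_{B\in\sB}W_B$ and, crucially, to show every element of $G$ is realized by a locally trivial Hodge monodromy operator -- this uses simultaneous resolution of the transversal ADE families along the strata, which is the heart of the matter; (4) conclude that $G$ acts on $\Def^\lt(X)$ with quotient $D$, hence $q$ is a finite Galois cover with group $G$; (5) read off triviality on $H^{2,0}$ and faithfulness on $H^{1,1}$ from the explicit action. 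The main obstacle I anticipate is step (3): transporting Namikawa's simultaneous-resolution argument -- originally for symplectic resolutions of nilpotent orbit closures and for irreducible symplectic manifolds -- to a possibly non-symplectic-resolution partial contraction $\pi$ of a singular $\widebar X$, controlling what happens along the dissident locus $\Sigma_0$ (of codimension $\geq 4$), and checking that the monodromy genuinely lands in the locally trivial monodromy group $\Mt(X)$ rather than merely the orthogonal group; the terminality hypothesis on $X$ is exactly what is used here to rule out extra components and to guarantee the transversal picture is ADE with no further collapsing.
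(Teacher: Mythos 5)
Your route proves (at best) a special case of the statement, not the statement itself. You build the Galois property out of the transversal ADE picture along the Kaledin strata, Namikawa's folded Weyl groups and simultaneous resolution, and you say yourself that terminality is ``exactly what is used'' there; moreover Namikawa's argument also needs projectivity (smoothness of $\Def(\widebar X)$ via \cite[Theorem~1]{Nam06}). But Theorem~\ref{theorem galois cover without group} carries no terminality, $\Q$-factoriality or projectivity hypothesis --- the paper stresses this immediately after the statement --- and the identification of the Galois group with $\Pi_{B\in\sB}W_B$ is deliberately postponed to Theorem~\ref{thm:refl-are-mon}, which is proved later under exactly those extra hypotheses and which, per Remark~\ref{rmk:implies}, recovers the present theorem only in the terminal case. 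The paper's proof is instead purely topological/Hodge-theoretic: on the locus $U\subset\Def(\widebar X,\pi)$ where $\pi_t$ is an isomorphism and $p$ is unramified (nonempty by Lemma~\ref{lemma generic fiber isomorphic}), the triviality of the local system $R^2f_*\Z_{\scrX}$ on the smooth germ $\Def^\lt(X)$ forces $V=p^{-1}(U)\to U$ to factor through the cover $U_K$ attached to the kernel of the monodromy representation of $\pi_1(U)$ on $H^2(\widebar\scrX_t,\Z)$; Markman's period-map argument, run with the singular local Torelli theorem, shows $V\cong U_K$, so $V\to U$ is Galois, the deck action extends over $\Def^\lt(X)$ via the period map, and finiteness (Lemma~\ref{lemma deflt finite}) identifies $\Def^\lt(X)/G$ with the normalization $D$; triviality on $H^{2,0}$ comes from Lemma~\ref{lemma monodromy fixes cohomology downstairs}. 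None of this uses the stratified ADE geometry, which is why no terminality is needed.

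Inside your own scheme there are also concrete unjustified steps. First, Lemma~\ref{lemma generic fiber isomorphic} does not give generic injectivity of $q$: over $U$ the map is an unramified cover whose degree equals the order of the deck group, and the deck transformations permute points at which the contraction is an isomorphism, so ``$\pi_t$ is an isomorphism at very general $t$'' says nothing about injectivity (you half-retract this, but the pivot you make is where the real work lies). Second, your step (2), identifying the fibers of $q$ with orbits of the full group $\{g\in\O(H^2(X,\Z),q_X): g|_{N^\perp}=\id\}$ (with $N$ as in \eqref{eq:H2}), is not justified and is in general false: the deck group has to be \emph{constructed}, and it is a priori a proper subgroup of that stabilizer; producing it is exactly what the kernel-of-monodromy cover and the isomorphism $V\cong U_K$ achieve in the paper. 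Third, your step (3), which you call the heart of the matter, is the content of Theorem~\ref{thm:refl-are-mon} and genuinely breaks down without terminality: in Example~\ref{example:counterex} the transversal type of $\widebar S$ is $A_2$, yet $\Def^\lt(S)\to\Def(\widebar S,\pi)$ is an isomorphism and the reflection in the exceptional divisor is not even integral, so the stratified Weyl-group computation does not compute the deck group of $q$ in the generality in which the theorem is stated.
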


We would like to point out that in this theorem there is no hypothesis on $X$ being terminal or $\Q$-factorial.

\begin{proof}
Let $U\subset \Def(\widebar X,\pi)$ be the biggest open subset such that $\pi_t:\scrX_t \to\widebar \scrX_{p(t)}$ is an isomorphism for every $t\in V:=p^{-1}(U) \subset \Def^\lt(X)$, and $p:V \to U$ is unramified over $U$. Then $U$ is non-empty by Lemma~\ref{lemma generic fiber isomorphic}. Let us denote by $f:\scrX \to \Def^\lt(X)$ and $g:\widebar \scrX \to \Def(\widebar X,\pi)$ the restrictions of the universal families. By \cite[Lemma~2.4]{BL21} respectively \cite[Proposition~5.1]{AV21}, the sheaf $\V:=\left(R^2f_*\Z_{\scrX}\right)$ is a local system on $\Def^\lt(X)$.  Further, the constructible sheaf $\U:=\left(R^2g_*\Z_{\widebar \scrX}\right)$ becomes a local system after restricting it to $U$ by definition of the latter. Moreover, by unobstructedness of $\Def^\lt(X)$, the local system $\V$ is trivial and hence the canonical isomorphism 
\[
p^{-1}\U\vert_U \to \V\vert_V
\]
shows that $p^{-1}\U$ is a trivial local system over $U$. In particular, if $K\subset \pi_1(U,t)$ for some point $t\in U$ is the kernel of the monodromy representation $\pi_1(U,t) \to \O(H^2(\widebar \scrX_t,\Z))$ and $U_K\to U$ is the associated topological Galois cover, then we have a factorization $V\to U_K \to U$. Arguing via the (locally trivial) period map of $X$ as in \cite[p.~193, proof of Lemma 1.2]{Mar10galois} and using the ``singular'' local Torelli \cite[Proposition~5.5]{BL18} instead of the classical one, we deduce that $V \to U_K$ is an isomorphism and thus $V\to U$ is Galois. The Galois action can be extended to $\Def^\lt( X)$ again via the period map as on p.~194 of \emph{op. cit.} by using the ``singular'' local Torelli instead of the classical one.  If $G=\Gal(p)$ is the Galois group, we consider the diagram

\[
\xymatrix{
\Def^\lt(X) \ar[r]^q \ar@/^20px/[rr]^p & \Def^\lt(X)/G \ar[r]^{\exists\, r} & \Def(\widebar X,\pi)\\
V \ar[r]  \ar@{^(->}[u]& V/G \ar@{=}[r]  \ar@{^(->}[u] & U \ar@{^(->}[u]  \\
}
\]
First note that $p$ factors through $q$ as the latter is a quotient by $G$ and the former is $G$-invariant. We read off that the morphism $r$ is bimeromorphic, and as both $p$ and $q$ are finite (we invoke Lemma~\ref{lemma deflt finite} for $p$ here) and $\Def^\lt(X)/G$ is normal,  $r:\Def^\lt(X)/G \to \Def(\widebar X,\pi)$ is the  normalization. Also the rest of the claim is rather formal and we refer to Markman's paper. For the triviality on $H^{2,0}$, we use the Lemma~\ref{lemma monodromy fixes cohomology downstairs} below. 
\end{proof}

Note that every complex space is locally contractible, so unobstructedness of locally trivial deformations of $X$ was not needed to show that the local system $\V$ is trivial. We used it however to deduce a factorization of $p$ through the normalization of $\Def(\widebar X,\pi)$. Let us have a closer look at the elements of the Galois group.

\begin{lemma}\label{lemma monodromy fixes cohomology downstairs}
The elements of the Galois group $G$ of the cover $p$ act on $H^2(X,\Z)$ fixing the subspace $H^2(\widebar{X},\Z)$. 
\end{lemma}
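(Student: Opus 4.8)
The statement to prove is Lemma~\ref{lemma monodromy fixes cohomology downstairs}: the Galois group $G$ of the cover $p : \Def^\lt(X) \to \Def(\widebar X, \pi)$ acts on $H^2(X,\Z)$ fixing the subspace $c^*H^2(\widebar X, \Z)$, where $c = \pi$. The plan is to identify, geometrically, which cohomology classes are fixed by $G$: they should be exactly those that stay ``visible'' downstairs, i.e. those coming from $\widebar X$. I would argue as follows.

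First I would recall the setup from the proof of Theorem~\ref{theorem galois cover without group}. Over the open set $V = p^{-1}(U) \subset \Def^\lt(X)$ on which $\pi_t : \scrX_t \to \widebar\scrX_{p(t)}$ is an isomorphism, we have the universal families $f : \scrX \to \Def^\lt(X)$ and $g : \widebar\scrX \to \Def(\widebar X, \pi)$, with local systems $\V = R^2 f_* \Z_{\scrX}$ (trivial, by unobstructedness) and $\U = R^2 g_* \Z_{\widebar\scrX}$ (a local system after restriction to $U$). The key point is that the morphism of families $\scrX \to \widebar\scrX$ over $p$ (diagram~\eqref{eq defo morphism} restricted to $\Def^\lt(X)$) induces a morphism of sheaves $p^{-1}\U \to \V$, which over $U$ is an isomorphism $p^{-1}\U|_U \xrightarrow{\sim} \V|_V$ realizing $\V|_V$ as pulled back from $U$. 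Dually, pullback gives an injection of local systems on $\Def^\lt(X)$: the constant subsheaf $\underline{c^* H^2(\widebar X,\Z)}$ embeds into $\V$, and over $V$ this is exactly the image of the (locally constant family of) pullback maps $(\pi_t)^* : H^2(\widebar\scrX_{p(t)},\Z) \to H^2(\scrX_t,\Z)$.

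Next comes the main mechanism. The Galois group $G = \Gal(p)$ acts on $\Def^\lt(X)$ over $\Def(\widebar X, \pi)$, hence it acts on $\V$ compatibly with its action on the base, and — since $\V$ is a trivial local system, with fiber canonically $H^2(X,\Z)$ — it acts on $H^2(X,\Z)$ by a group of isometries; this is precisely the monodromy realization from Theorem~\ref{theorem galois cover without group}. Because the morphism of families $\scrX \to \widebar\scrX$ is $G$-equivariant (with $G$ acting trivially on $\widebar\scrX$), the induced morphism $p^{-1}\U \to \V$ is $G$-equivariant as well. Therefore the subsheaf of $\V$ that is the image of $p^{-1}\U$ — equivalently, the constant subsheaf $\underline{c^* H^2(\widebar X,\Z)}$ — is $G$-stable, and since it comes from the $G$-invariant sheaf $\U$ on the quotient, $G$ acts trivially on it. Concretely: on $V$, for $\gamma \in G$ we have $\gamma|_{\scrX_t}^* \circ \pi_t^* = \pi_{\gamma t}^*$ under the identification $\widebar\scrX_{p(t)} = \widebar\scrX_{p(\gamma t)}$, so the composition $\pi_t^* : H^2(\widebar X,\Z) \to H^2(X,\Z)$ lands in the $\gamma$-fixed subspace; spreading this over $V$ (where everything is locally constant and $\V$ trivial) and using that $V$ is dense, the fixed locus of $\gamma$ on $H^2(X,\Z)$ contains $c^* H^2(\widebar X,\Z)$, as claimed. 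One should note the identification $H^2(\widebar X,\Z) \cong H^2(\widebar\scrX_{p(t)},\Z)$ via parallel transport in the family $\widebar\scrX \to \Def(\widebar X,\pi)$ over $U$, which is precisely the ambiguity already appearing in $\U|_U$ being a local system; and that $c^* H^2(\widebar X,\Z)$ is the $\Z$-structure of the summand $c^* H^2(\widebar X,\Q)$ from~\eqref{eq:H2}, so the statement is intrinsic to $X$.

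The main obstacle I anticipate is bookkeeping the compatibilities cleanly: making sure the $G$-action on the family $\scrX$, the $G$-action on the trivial local system $\V$, the morphism of families covering $p$, and the base-point identifications are all compatible, so that the equation $\gamma^* \circ \pi_t^* = \pi_{\gamma t}^*$ genuinely holds with all the arrows going where one wants. This is exactly the type of argument Markman carries out in \cite[Lemma~1.2]{Mar10galois} for irreducible symplectic manifolds via the period map, and the honest content here is checking that the substitution of ``singular local Torelli'' \cite[Proposition~5.5]{BL18} for the classical one — already used in the proof of Theorem~\ref{theorem galois cover without group} — carries through, together with the identification of $\V$ as a trivial local system (from unobstructedness, \cite[Theorem~4.7]{BL18}). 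No genuinely new difficulty beyond what was handled there should arise; the proof is essentially formal once the trivial-local-system picture of the previous theorem is in hand, which is why we expect it to be short.
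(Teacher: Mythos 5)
Your overall framing (work with the families $f:\scrX\to\Def^\lt(X)$, $g:\widebar\scrX\to\Def(\widebar X,\pi)$, the trivial local system $\V=R^2f_*\Z_{\scrX}$, the comparison with $\U=R^2g_*\Z_{\widebar\scrX}$ over $U$, and $G$-equivariance) is the same as the paper's, but the central mechanism you invoke has a genuine gap, in fact two related errors. First, over $V=p^{-1}(U)$ the map $\pi_t:\scrX_t\to\widebar\scrX_{p(t)}$ is an \emph{isomorphism}, so the image of $p^{-1}\U\vert_U\to\V\vert_V$ is all of $\V\vert_V$, not the constant subsheaf $\underline{\pi^*H^2(\widebar X,\Z)}$; your identification of these two subsheaves is false, and with it the assertion that the relevant subspace is ``the image of $p^{-1}\U$''. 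Second, the principle ``it comes from the $G$-invariant sheaf $\U$ on the quotient, hence $G$ acts trivially on it'' is not valid: the $G$-action on the flat sections of $p^{-1}\U\vert_U$ is exactly the monodromy representation of $\U\vert_U$ (that is how the Galois group is identified with monodromy in Theorem~\ref{theorem galois cover without group}), which is far from trivial. Only classes that extend to single-valued sections of $\U$ over all of $U$ are monodromy-invariant, hence $G$-fixed upstairs. Finally, the identification $H^2(\widebar X,\Z)\cong H^2(\widebar\scrX_{p(t)},\Z)$ ``via parallel transport in $\widebar\scrX$ over $U$'' does not exist: the central point $0$ does not lie in $U$, $\U$ is only constructible there, and the two groups even have different ranks ($b_2(\widebar X)<b_2(X)=b_2(\widebar\scrX_{p(t)})$ when $\pi$ is not an isomorphism), so there is no such transport, only a specialization-type map.

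The missing idea — which is precisely what the paper's proof supplies and what makes the lemma true — is the extension step: by Lojasiewicz's theorem (the germ $\widebar\scrX$ retracts onto its central fiber, \cite[Theorem I.8.8]{BHPV04}), the restriction $H^2(\widebar\scrX,\Z)\to H^2(\widebar X,\Z)$ is an isomorphism, so every class $\alpha$ on $\widebar X$ extends to a class on the total space $\widebar\scrX$ and therefore defines a \emph{global} section of $\U$ over $U$; such a section is automatically invariant under the monodromy of $\U\vert_U$, and pulling it back to $\scrX$ and using the triviality of $\V$ identifies its value in every fiber with $\pi^*\alpha\in H^2(X,\Z)$, which is therefore fixed by $G$. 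Your concrete formula $\gamma^*\circ\pi_{\gamma t}^*=\pi_t^*$ is correct and can be used at this point, but without the extension of $\alpha$ to $\widebar\scrX$ (or equivalently to a section of $\U$) it only compares the full images of two isomorphisms and yields nothing; so as written the proposal does not close the argument.
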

\begin{proof}
As in the proof of Theorem~\ref{theorem galois cover without group}, we consider the open set $U\subset \Def(\widebar X,\pi)$ over which $\scrX_t \to \widebar \scrX_{p(t)}$ is an isomorphism and recall that the restriction of $\U=\left(R^2g_*\Z_{\widebar \scrX}\right)$ to this open set is a local system. Let us consider the diagram
\[
\xymatrix{
X\ar[r] \ar[d] & \scrX \ar[d]\\
{\widebar{X}} \ar[r] & {\widebar{\scrX}}\\
}
\]
and the induced pullback morphisms on the second cohomology. By Lojasiewicz's theorem \cite{Loj64}, see also \cite[Theorem I.8.8]{BHPV04}, the morphism $H^2(\widebar{\scrX},\Z)\to H^2(\widebar X,\Z)$ is an isomorphism, hence classes in $H^2(\widebar X,\Z)$ extend to sections in  $\U\vert_U$ and thus their restriction to a general fiber is monodromy invariant. The monodromy of the local system acts on $H^2(X,\Z)$ by the triviality of the pullback $p^{-1}\U_U$. By the discussion above, we see that $H^2(\widebar X,\Z)$ is invariant under this action.
\end{proof}

The proof of the following result essentially proceeds as in \cite[Paragraph (1.3)]{Nam10}.

\begin{theorem}\label{thm:refl-are-mon}
Let $\pi:X \to \widebar X$ be a birational morphism between projective primitive symplectic varieties. Suppose that $X$ has terminal singularities. Then the Galois group $G$ of the finite Galois cover $p:\Def^\lt(X)\to \Def(\widebar X,\pi)$ is isomorphic to
$\Pi_{B\in \mathcal B}W_B$.
\end{theorem}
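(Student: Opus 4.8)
The plan is to identify the Galois group $G$ of the cover $p:\Def^\lt(X)\to\Def(\widebar X,\pi)$ with the group $\prod_{B\in\mathcal B}W_B$ of $\tau$-invariant Weyl group elements attached to the codimension-two strata of the singular locus $\Sigma\subset\widebar X$. The computation is local on $\Def(\widebar X,\pi)$ near the base point, so by Theorem~\ref{theorem galois cover without group} it suffices to understand the degree of $p$ and the action of $G$ on $H^2(X,\Z)$, which is already known to be faithful on $H^{1,1}(X)$ and trivial on $H^{2,0}(X)$.

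First I would reduce to a local model. Since $X$ has terminal singularities and $\pi$ is birational, the only divisorial part of the exceptional locus is empty (terminal contractions are small in codimension one, and here all exceptional curves have classes spanning the negative-definite lattice $N$ of \eqref{eq:H2}); the contracted locus sits over $\Sigma$. Using Kaledin's stratification \eqref{eq kaledin germs}, étale-locally over a point $p\in B$ of a maximal stratum, $\widebar X$ looks like (ADE surface singularity) $\times$ (polydisc), and $X$ over it looks like the corresponding simultaneous partial resolution. The universal locally trivial deformation of $X$ restricted to such a chart is governed by the $\tau$-invariant part of the deformation of the ADE singularity — this is exactly the content of Namikawa's analysis in \cite{Nam10}. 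The key input is that, by \cite[Theorem~1]{Nam06} combined with the local triviality, $\Def^\lt(X)$ and $\Def(\widebar X,\pi)$ are smooth of the same dimension $h^{1,1}(X)$, and the map $p$ is étale away from a divisor; the ramification divisor is the union, over $B\in\mathcal B$, of the reflection hyperplanes of the folded root system $\Phi_B$. By the classical Chevalley–Shephard–Todd picture for the semiuniversal deformation of an ADE singularity — where the Weyl group $W(\Phi_B)$ acts and the quotient by $W(\Phi_B)$ recovers the base of the versal deformation of the singularity — together with Namikawa's folding argument accounting for the graph automorphism $\tau$ induced by the monodromy of the fibers of $X\to\widebar X$ over $B$, one gets that near a generic point of the stratum the cover $p$ looks like the quotient map for $W_B=\{w\in W(\Phi_B)\mid \tau w\tau^{-1}=w\}$.

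Next I would globalize. The strata $B\in\mathcal B$ are disjoint and, by the codimension bound $\codim_{\widebar X}\Sigma_0\ge 4$ on the dissident locus, the corresponding ramification divisors in $\Def(\widebar X,\pi)$ are in ``general position'' in the sense that the local monodromy groups around them generate their direct product. Concretely: the Galois group $G$ is a quotient of $\pi_1(U,t)$ where $U$ is the open locus over which $\pi_t$ is an isomorphism (as in Theorem~\ref{theorem galois cover without group}), and $\pi_1(U,t)$ is generated by small loops around the components of $\Def(\widebar X,\pi)\setminus U$, which are precisely the reflection hyperplanes indexed by roots in $\bigsqcup_B\Phi_B$. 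The relations are the ``braid''/Coxeter relations coming from the local ADE pictures within each $B$, and there are no cross-relations between different $B$'s because the corresponding divisors only meet along the deformations of the dissident locus, which has high enough codimension to not affect $\pi_1$. Hence $G\cong\prod_{B\in\mathcal B}W_B$. That each generator acts on $H^2(X,\Z)$ as a monodromy Hodge isometry, trivially on $H^{2,0}$, is Theorem~\ref{theorem galois cover without group} and Lemma~\ref{lemma monodromy fixes cohomology downstairs}; that the action is faithful on $H^{1,1}(X)$ pins down that the abstract group is not further collapsed.

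The main obstacle I expect is the globalization step: showing that the local contributions of distinct strata $B$ assemble into a genuine direct product with no hidden identifications, and that the folded root system datum $(\Phi_B,\tau)$ is correctly read off from the geometry of $X\to\widebar X$ over each $B$ (in particular that $\tau$ is exactly the monodromy graph automorphism and that the terminality of $X$ forces the partial resolution over $B$ to be the full $\tau$-folded minimal resolution, so that $W_B$ and not a proper subgroup appears). This is where the terminality hypothesis is essential — as flagged after Theorem~\ref{thm:mon-intro}, the statement fails without it — and where I would lean most heavily on \cite{Nam10,Mar10galois}, transporting their arguments to the singular setting via the smoothness of $\Def^\lt(X)$ from \cite[Theorem~4.7]{BL18} and the deformation-of-contractions results of Section~\ref{section deformations of contractions}.
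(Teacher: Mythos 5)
Your local analysis over the strata (Kaledin's transversal ADE structure, terminality forcing $\pi$ to be the minimal resolution over $\widebar X\setminus\Sigma_0$, Slodowy's description of $\Def(S_B)\to\Def(\widebar S_B)$, and Namikawa's folding by the monodromy automorphism $\tau$) matches the ingredients the paper uses. The genuine gap is in your globalization step. You argue that $G$ is presented by loops around the discriminant components with ``no cross-relations between different $B$'s because the corresponding divisors only meet along the deformations of the dissident locus, which has high enough codimension to not affect $\pi_1$.'' This confuses codimension in $\widebar X$ with codimension in the Kuranishi space: the bound $\codim_{\widebar X}\Sigma_0\geq 4$ says nothing about the base germ $\Def(\widebar X,\pi)$, where the discriminant components attached to distinct strata (and to distinct roots within one stratum) are hypersurfaces through the origin meeting in codimension two --- exactly the codimension that produces relations in $\pi_1$ of the complement. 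Moreover, knowing the local monodromy at generic points of the discriminant (a single reflection each) together with faithfulness on $H^{1,1}(X)$ bounds neither the group from above nor from below; it does not rule out hidden identifications between different $B$'s, nor elements invisible near the strata, so the conclusion $G\cong\prod_B W_B$ does not follow from what you establish.

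The paper (following Namikawa) closes exactly this gap by a different device: Poisson deformation theory yields compatible morphisms $\Def^\lt(X)\to\prod_B\Def(S_B)$ and $\Def(\widebar X,\pi)\to\prod_B\Def(\widebar S_B)$, and the induced map
\[
\iota:\Def^\lt(X)\longrightarrow \Def(\widebar X,\pi)\times_{\prod_B\Def(\widebar S_B)}\prod_B\Def(S_B)
\]
is shown to be an isomorphism; this exhibits $p$ as the base change of the product of Slodowy's Galois covers, whose restriction over the image of $\Def(\widebar X,\pi)$ has group $W_B$ by Namikawa's Lemma~1.2, whence $G=\prod_B W_B$ with no $\pi_1$ computation at all. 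Proving $\iota$ is an isomorphism is where projectivity enters, via smoothness of $\Def(\widebar X,\pi)$ inside the smooth $\Def(\widebar X)$ (Namikawa's unobstructedness); your proposal asserts this smoothness but ``combined with local triviality'' is not a justification, and more importantly it never formulates the fiber-product statement that actually carries the proof. A smaller but telling slip: your parenthetical claim that the exceptional locus of $\pi$ has no divisorial part is false --- the whole application (Theorem~\ref{theorem reflection}) is to divisorial contractions; what terminality really gives is $\codim X^{\sing}\geq 4$, hence that $\pi$ is a crepant (minimal) resolution transversally to each stratum $B$.
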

\begin{proof}
Recall from Section \ref{section folded dynkin} that locally analytically around each point in $\widebar U:=\widebar X \setminus \Sigma_0$ the variety $\widebar X$ is either smooth or has transversal ADE-singularities as in \eqref{eq kaledin germs}. By terminality of singularities, $\pi$ is a resolution over points in $\widebar U$. But an ADE-singularity has a unique crepant resolution and the ADE type is moreover constant at the general point  of a component $B\in \sB$ so that near such a point we can identify $\pi$ with a map
\begin{equation}\label{eq transversal minimal resolution}
\pi_B: S_B\times  \Delta^{2n-2}  \to \widebar S_B \times  \Delta^{2n-2}
\end{equation}
induced by the minimal resolution $S_B\to \widebar S_B$ of an ADE surface singularity $(S_B,0)$. Here, $\Delta^{2n-2}\subset \C^{2n-2}$ is a small polydisk. Let us write $T_B:= S_B\times \Delta^{2n-2}$ and $\widebar T_B:=\widebar S_B \times \Delta^{2n-2}$ and choose such identifications. In particular, we have chosen inclusions $\widebar T_B \subset \widebar U$ and $T_B\subset U:=\pi^{-1}(\widebar U)$ for each $B$.

Each deformation of $\widebar X$ determines upon choosing a lift of the symplectic form $\sigma$ a Poisson deformation of $(\widebar X,\sigma)$ which in turn determines a Poisson deformation of $\widebar T_B$ for each $B$. We refer to \cite{Nam10} and references therein for the notion of a Poisson deformation; we will however formulate as much as possible in the language of ordinary deformations. As explained in \cite[p.~741]{Nam10}, a Poisson deformation  of $\widebar T_B$ is uniquely determined by the underlying flat deformation. Finally, Poisson deformations of $\widebar T_B$ are in bijective correspondence with ordinary flat deformations of $\widebar S_B$. Put together, we obtain a morphism $\Def^\lt(\widebar X) \to \Def(\widebar S_B)$ for each $B$ and similarly for $X$. In fact, we obtain the following commutative diagram 
\begin{equation}\label{eq:Nami24}
\xymatrix{
\Def^{\lt}(X) \ar[r]\ar[d] & \Pi_{B\in \mathcal B}\ \Def (S_B) \ar[d]\\
\Def(\widebar X,\pi) \ar[r] & \Pi_{B\in \mathcal B}\ \Def (\widebar S_B).\\
}
\end{equation}
%Notice that $\PDef^\lt (T_B)= \PDef (T_B)$ as $T_B$ is smooth.
From diagram (\ref{eq:Nami24}) we deduce the existence of a natural map
\begin{equation}\label{eq:iota}
  \iota  :\Def^{\lt}(X) \to \Def(\widebar X,\pi) \times_{\prod_{B\in \mathcal B}\ \Def (\widebar S_B)}\ \prod_{B\in \mathcal B}\ \Def(S_B),
\end{equation}
which we check to be an isomorphism as in \cite{Nam10}. This is where the projectivity assumption is needed. It allows to deduce that $\Def(\widebar X, \pi)$ is smooth by embedding it in the smooth ambient space $\Def(\widebar X)$,  see \cite[Theorem~1]{Nam06}.

%To conclude we observe that by \cite[Lemma 1.6 and Proposition 3.1]{Nam11} the Galois group of the finite Galois cover 
While $\Def(S_B)\to \Def(\widebar S_B)$ is a Galois cover with Galois group $W(\Phi_B)$ (in the notation of Section \ref{section folded dynkin}), see~\cite{Slo80}, over the image of $\Def(\widebar X,\pi)$ it restricts to a cover with Galois group isomorphic to $W_B$ by \cite[Lemma~1.2]{Nam10}. Thus, we conclude that $G=\Pi_{B\in \mathcal B} W_B$ as claimed.
\end{proof}
\begin{remark}\label{rmk:implies}
Notice that Theorem \ref{thm:refl-are-mon}  in particular proves Theorem~\ref{theorem galois cover without group} in the terminal case, by considering  the isomorphism in (\ref{eq:iota}). 
\end{remark}
\begin{remark}\label{rmk:crucial}
What is crucial in the proof of Theorem~\ref{thm:refl-are-mon} is the smoothness of the Kuranishi space $\Def(\widebar X)$. This has been shown by Namikawa for projective symplectic varieties in \cite[Theorem~1]{Nam06} and by Bakker--Lehn  (reducing to Namikawa) in \cite[Proposition~5.24]{BL21} for symplectic varieties admitting an irreducible symplectic resolution. 
\end{remark}

% Recall that we want to show that the reflection in a prime exceptional divisor is a monodromy Hodge isometry. The difficult part is to show that such a reflection is an element of the Galois group, the analog of \cite[Lemma~4.10]{Mar10galois}. 

%-------------------------------------------
\begin{theorem}\label{theorem reflection}
Let $X$ be a projective primitive symplectic variety with terminal singularities and let $E$ be a prime exceptional divisor on $X$. Then the reflection
\[
R_E: H^2(X,\Q)\to H^2(X,\Q), \quad \alpha \mapsto \alpha -2 \frac{q(E,\alpha)}{q(E,E)} E
\]
is integral and a monodromy Hodge isometry.
\end{theorem}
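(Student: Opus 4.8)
The plan is to deduce Theorem~\ref{theorem reflection} from Theorem~\ref{thm:refl-are-mon} by passing to a birational model on which $E$ can be contracted, and then identifying the reflection $R_E$ with an element of the Galois group produced there. Concretely, since being a locally trivial monodromy Hodge isometry is invariant under locally trivial deformation and under conjugation by locally trivial parallel transport operators, we may use \cite[Theorem 1.2]{LMP21} to replace $X$ by a locally trivial deformation $X'$ of $X$ on which the deformed divisor $E'$ admits a birational contraction $\pi:X'\to \widebar{X}$. It suffices to prove the statement for $X'$ and $E'$; we therefore assume from the start that we have a birational contraction $\pi:X\to\widebar X$ contracting exactly $E$, and that $\pi$ has relative Picard rank one (this last reduction: replace $\widebar X$ by the contraction of a single extremal ray meeting $E$; the exceptional locus of that contraction is a divisor since $E$ is $\Q$-Cartier with $q_X(E)<0$, hence it is $E$ itself).

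Next I would analyze the decomposition \eqref{eq:H2}: we have $H^2(X,\Q)=\pi^*H^2(\widebar X,\Q)\oplus N$ with $N$ negative definite, defined over $\Z$, of type $(1,1)$, and in the relative Picard rank one case $N$ is one-dimensional, spanned by a rational multiple of $[E]$ (indeed $[E]$ is $q_X$-orthogonal to $\pi^*H^2(\widebar X,\Q)$ by the projection formula, since $E$ is contracted, and $q_X(E)<0$ places it in $N$). Therefore the reflection $R_E$ acts as $-1$ on $N$ and as the identity on $\pi^*H^2(\widebar X,\Q)$; in particular $R_E$ is an isometry of $(H^2(X,\Q),q_X)$ which is automatically a Hodge isometry since both summands are sub-Hodge-structures (here $N$ is purely $(1,1)$). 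The content to be extracted from Theorem~\ref{thm:refl-are-mon} is then: the Galois group $G=\Pi_{B\in\mathcal B}W_B$ of $p:\Def^\lt(X)\to\Def(\widebar X,\pi)$ acts on $H^2(X,\Z)$ by monodromy Hodge isometries (Theorem~\ref{theorem galois cover without group}), fixing $\pi^*H^2(\widebar X,\Z)$ pointwise (Lemma~\ref{lemma monodromy fixes cohomology downstairs}) and acting faithfully on the complement, which here is $N_{\Z}$, a rank-one lattice. The automorphism group of a rank-one lattice preserving the form is $\{\pm 1\}$, so $G$ is either trivial or $\Z/2$; it is nontrivial because $\pi$ is not an isomorphism (the folded Weyl group $W_B$ of a nonempty component is nontrivial — when $\codim E=1$, i.e. the surface case of \eqref{eq transversal minimal resolution} is an $A_1$ configuration, $W_B=\Z/2$; in general $W_{\Exc}$ acting faithfully on the negative part forces some generator to act as $-1$ on the line $N$). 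Hence the nontrivial element of $G$ acts on $H^2(X,\Z)$ as the identity on $\pi^*H^2(\widebar X,\Z)$ and as $-1$ on $N_\Z$, which is exactly $R_E$. This simultaneously shows $R_E$ is integral (it preserves $H^2(X,\Z)$, being induced by a parallel transport operator) and that it is a locally trivial monodromy Hodge isometry.

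The step I expect to be the main obstacle is controlling what the Galois group does on the negative-definite summand $N$ — that is, matching the abstract generator of $W_B$ with the geometric reflection $R_E$, and in particular checking the relative-Picard-rank-one reduction cleanly so that $N$ really is the rank-one lattice $\Z\cdot[E]_{\tf}$ up to finite index rather than a larger negative-definite lattice. One must be careful that the contraction of a single extremal ray meeting $E$ may contract additional curves inside $E$ but no additional divisor, so that $\mathcal B$ has a single relevant component and the fibre components over its general point form an $A_1$-configuration (a single $\P^1$, possibly with a $\Z/2$ monodromy identification), giving $W_B\cong\Z/2$ generated by the reflection in the class of that $\P^1$, whose image in $H^2(X,\Z)$ under $c\mapsto c^\vee$ is proportional to $[E]$. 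Once this local picture is pinned down — essentially the content already packaged in Section~\ref{section folded dynkin} together with Namikawa's analysis in \cite{Nam10} — the identification $R_E\in G$ is forced by the rank-one computation, and integrality plus the monodromy and Hodge properties follow at once. One final bookkeeping point: one should note that the argument produces $R_E$ defined over $H^2(X,\Z)_{\tf}$, and that this is what is meant by ``integral'' in the presence of torsion, cf. Remark~\ref{remark torsion cohomology}.
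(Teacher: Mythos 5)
Your identification of $R_E$ with the unique nontrivial element of the Galois group---acting as the identity on the pullback of $H^2(\widebar X,\Z)$ by Lemma~\ref{lemma monodromy fixes cohomology downstairs} and faithfully on the rank-one negative summand $N$ by Theorem~\ref{theorem galois cover without group}, with nontriviality coming from Theorem~\ref{thm:refl-are-mon}---is exactly the mechanism the paper uses. The genuine gap is in your very first reduction: you contract $E$ on a locally trivial deformation of $X$ itself by invoking \cite[Theorem 1.2]{LMP21}, but that contraction result (as recalled in Section~\ref{section prime exceptional}) is for projective \emph{$\Q$-factorial} primitive symplectic varieties, whereas Theorem~\ref{theorem reflection} only assumes $X$ terminal. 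Your auxiliary extremal-ray reduction suffers from the same problem, and moreover the claim that the exceptional locus of that extremal contraction is a divisor and equals $E$ is asserted, not proved. The paper circumvents all of this by first passing to a $\Q$-factorialization $\pi:Y\to X$---a small map, so $\pi^*E$ is still prime exceptional---and only then contracting $\pi^*E$ on a birational model of $Y$ via \cite[Theorem 1.2]{LMP21}.

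This creates a second step your proposal does not address: the Galois/monodromy statement is then obtained on $Y$, not on $X$, and must be transferred. Integrality of $R_E$ on $H^2(X,\Z)_\tf$ follows because $H^2(X,\Z)$ embeds isometrically into $H^2(Y,\Z)$ compatibly with $\pi^*E$, so $R_{\pi^*E}$ restricts to $R_E$. But to see that $R_E$ is a \emph{monodromy} operator of $X$ (arising from loops in locally trivial families of $X$, not of $Y$) the paper uses Proposition~\ref{prop defo} to view $\Def^\lt(X)$ as a subspace of $\Def^\lt(Y)$ meeting the branch locus of the double cover $p:\Def^\lt(Y)\to\Def(\widebar Y,c)$---which is the Hodge locus of $\pi^*E$, by \cite[Corollary~2.29]{BGL20}---transversally, so that the restricted cover is still a nontrivial double cover and its Galois involution acts through the locally trivial monodromy of $X$ itself. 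Without this transfer, or some substitute for the missing $\Q$-factoriality, your argument only establishes the theorem under the additional hypothesis that $X$ is $\Q$-factorial.
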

\begin{proof}
Let $\pi\,:\,Y\to X$ be a $\Q$-factorialization of $X$. Notice that this map is small, hence $\pi^*E$ is still prime exceptional on $Y$. Up to replacing $Y$ with a different birational model, by \cite[Theorem 1.2]{LMP21} we can suppose that $\pi^*E$ can be contracted through a map $c\,:Y\to \widebar{Y}$. With the notation of the previous theorem, $B$ is irreducible and $W_B$ contains only a single non trivial element of order $2$, which by Lemma~\ref{lemma monodromy fixes cohomology downstairs} fixes the hyperplane $H^2(\widebar Y,\Z)$. As by Theorem~\ref{theorem galois cover without group} the representation of $W_B$ on $H^2(Y,\Z)$ is faithful, the nontrivial element is precisely the reflection in $\pi^*E$. In particular, the latter is integral. 

As $\pi^*E$ lies inside the isometric image of $H^2(X,\Z)$ in $H^2(Y,\Z)$ and the BBF-form is compatible with pullback along birational maps, the reflection $R_{\pi^*E}$ restricts to $R_E$ on $H^2(X,\Z)_\tf$. In particular, it leaves $H^2(\widebar X,\Z)$ invariant and is integral on the cohomology of $X$. By Proposition~\ref{prop defo}, we can identify $\Def^\lt(X)$ with a subspace of $\Def^\lt(Y)$ that intersects the branch locus of $p:\Def^\lt(Y)\to \Def(\widebar Y, c)$ transversally. Note that the branch locus of $p$ is just the Hodge locus of $\pi^*E$. Indeed, by Theorem~\ref{thm:refl-are-mon} the map $p$ is a non-trivial branched cover and one is essentially left to show that the contraction exists exactly along the Hodge locus, which again follows from \cite[Corollary~2.29]{BGL20}. Hence, the restriction of the double cover $p$ to $\Def^\lt(X)$ is still a nontrivial double cover so that restriction gives an isomorphism of Galois groups. This implies that $R_E$ is also a monodromy operator. The fact that it is a Hodge isometry follows from Theorem \ref{theorem galois cover without group}.
\end{proof}

\begin{remark}\label{remark dual curve}
Using adjunction, we can also see that the class of the general curve $R$ in the ruling of $E$ interpreted as an element in $H_2(X,\Q)^\vee$ is equal to $-2 \frac{q(E,\bullet)}{q(E,E)}$. This uses the terminality of $X$ (which implies that the  $R$ does not meet $X^\sing$) and the fact that the normal bundle of the curve inside of $E$ is trivial.
\end{remark}

% \begin{corollary}\label{cor:non-proj}
% Let $X$ be a primitive symplectic variety with $\Q$-factorial and terminal singularities and let $E$ be a prime exceptional divisor on $X$. Then the reflection
% \begin{equation}\label{eq reflection}
% R_E: H^2(X,\Q)\to H^2(X,\Q), \quad \alpha \mapsto \alpha - \frac{q(E,\alpha)}{q(E,E)} E
% \end{equation}
% is integral and a monodromy Hodge isometry.
% \end{corollary}
% \begin{proof}
% \margincom{C: our theorem 1.2 needs projectivity.}We argue as in \cite[Proof of Theorem 3.24]{AV15}. We consider a small deformation $X'$ of $X$ in the Hodge locus of $E$ which is also projective. By \cite[Theorem 1.2, item (2.b)]{LMP21} $E$ deforms along to a prime exceptional $E'\subset X'$. We can now apply Theorem \ref{theorem reflection} to the reflection $r_{E'}$ which of course equals $r_E$ by the topological nature of the monodromy group.\margincom{GL: $E$ deforms on a open subset. We only have to check that in this open we do have a projective point in the Hodge locus, which should follow regardless of the singularities from a density statement eg the result with Giovanni on density.}  
% \end{proof}

One can ask whether the hypothesis on $X$ having terminal singularities in Theorem~\ref{thm:refl-are-mon} can be dropped. %But in this case, the analog of the morphism \eqref{eq transversal minimal resolution} is no longer a resolution of singularities, hence it is clear \margincom{GL: Why is that so clear?} that the Galois group cannot be the full $\Pi_{B\in \mathcal B}W_B$. 
%\margincom{C: use $\Pi$ or $\prod$???} 
The following simple example shows that even the reflections generated by the exceptional divisors are not in general in the Galois group, i.e. there is no analog of Theorem~\ref{theorem reflection} without the terminality hypothesis.

\begin{example}\label{example:counterex}
Let $\wt S$ be a K3 surface containing an $A_2$ string of $(-2)$-curves $C_1, C_2$ and let $\wt S \to S \to \widebar S$ be the successive contractions of $C_1$ in the first and the image of $C_2$ in the second step. Let us consider the induced maps between the full Kuranishi spaces 

\[
\xymatrix{
\Def(\wt S) \ar[r]_{p_1} \ar@/^6mm/[rr]^p & \Def(S) \ar[r]_{p_2} & \Def(\widebar S).
}
\]
Then $\Gal(p)=S_3$ and $\Gal(p_1)=S_2$, the Weyl groups of the $A_2$ and $A_1$ root systems. The map $p_2$ consequently has degree $3$ but we will see that it becomes an isomorphism upon restriction to $\Def^\lt(S)$. 
%\begin{figure}
 %   \centering
  %  \includegraphics[height=8cm]{./reflections.png}
   % \caption{Reflections in $\Def(\wt S)$.}
    %\label{figure reflections}
%\end{figure}\margincom{C: The figure is crap, just there for testing reasons. But I think we should include one.}
Let us describe the action of the Weyl group $S_3$ in the period domain. There, the action is generated by $r_1, r_2$, the reflections in $C_1,C_2$. We may identify $\Def^\lt(S)$ with $C_1^\perp$, the Hodge locus of $C_1$. This is pointwise fixed by $r_1$ but it is not fixed by $r_2$ because $C_1$ and $C_2$ are not perpendicular. In fact, the $S_3$-orbit of $C_1^\perp$ also contains $C_2^\perp$ and $(C_1+C_2)^\perp$. We see that under the quotient map $p$, the space $\Def^\lt(S)$ is mapped isomorphically onto its image. Hence, $\Def^\lt(S)\to \Def(\widebar S,\pi)$ is an isomorphism where $\pi:S\to \widebar S$ is the contraction of $C_2$. %A transversal slice to the common Hodge locus of $C_1,C_2$ is shown in Figure~\ref{figure reflections}.

Let $D_2$ denote the image of $C_2$ in $S$. The fact that $r_2$ does not preserve $C_1^\perp$ is equivalent to the reflection $\widebar r$ in $D_2$ on the second cohomology of $S$ not being integral. Indeed, $D_2$ is 2-factorial and $2D_2$ is primitive with square $(-6)$ and divisibility $1$. For an explicit example, take e.g. a smooth quartic surface $\wt S$ containing two incident lines $C_1,C_2$ and let $H$ be the hyperplane class. Then, with the notation above, $r_{D_2}(H)= H+\frac{D_2}{3}$. 
In particular, the analog of Theorem~\ref{theorem reflection} cannot hold for $S$. 
\end{example}

%For completeness, we include the following result that settles the general case. We are grateful to Professor Namikawa for suggesting the correct statement. Recall from \cite[Corollary~1.4.3]{BCHM10} that a projective variety with log terminal singularities admits a proper birational morphism from a terminal model. 

%\margincom{GL: I would rather suppress this part and leave it to be done later by some students, post-docs, someone else or even ourselves in our darkest days ;)}
%\begin{theorem}\label{theorem namikawa}
%Let $\pi:X \to \widebar X$ be a birational morphism between projective primitive symplectic varieties and let $\rho:Y\to X$ be a %terminal model. Then Galois group $G$ of the finite Galois cover $p:\Def^\lt(X)\to \Def(\widebar X,\pi)$ is isomorphic to $N/G$ %where $G=\Gal(\rho)$ and $N$ is the normalizer of $G$ in $\Gal(\pi\circ \rho)$.
%\end{theorem}
%Note that $Y$ is automatically a primitive symplectic variety.
%\begin{proof}
%TBD
%\end{proof}

%This result is in line with Example~\ref{example:counterex}. There, we have $\Gal(\pi\circ\rho)=S_3$ and $G=\Z/2\Z$ which is its own normalizer.

%-------------------------------------------
%-------------------------------------------
%-------------------------------------------
\section{Monodromy and cones}\label{section wall divisors}
%-------------------------------------------
%-------------------------------------------
%-------------------------------------------

%-------------------------------------------
\subsection{Monodromy operators induced by birational maps}\label{section bir monodromy}
%-------------------------------------------
We will show in this paragraph that birational maps between $\Q$-factorial terminal projective primitive symplectic varieties induce monodromy operators. Note that for those varieties, a birational map induces a morphism between the second cohomology groups (by $\Q$-factoriality)  which is an isomorphism (by terminality, cf. \cite[Corollary 3.54]{KM98}).

\begin{theorem}\label{theorem birational monodromy}
Let $\phi:X\ratl X'$ be a birational map between  projective primitive symplectic varieties with $\Q$-factorial terminal singularities. Then $\phi_* \in \Mth(X)$.
\end{theorem}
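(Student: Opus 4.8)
The plan is to reduce the statement to the case of a birational \emph{morphism} — i.e. to the situation handled by Theorem~\ref{thm:refl-are-mon} and Theorem~\ref{theorem galois cover without group} — by factoring $\phi$ suitably. Since $X$ and $X'$ are $\Q$-factorial and terminal, any birational map $\phi:X\ratl X'$ is an isomorphism in codimension one, so $\phi_*:H^2(X,\Z)_\tf\to H^2(X',\Z)_\tf$ is a well-defined isometry for the BBF forms (it preserves $q$ because it preserves the Hodge structures and intersection numbers on a common resolution, and by $\Q$-factoriality there is no loss of divisors). The goal is thus to exhibit this isometry as a locally trivial parallel transport operator between $X$ and $X'$.

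First I would take a common resolution, or better, pass to $\Q$-factorial terminalizations and use the theory of relative MMP: $\phi$ decomposes into a finite sequence of flops. So it suffices to treat a single flop $X \ratl X'$, i.e. a diagram $X \xrightarrow{c} \widebar X \xleftarrow{c'} X'$ where $c,c'$ are small birational contractions of relative Picard rank one to a common (projective) primitive symplectic variety $\widebar X$ (that $\widebar X$ is again primitive symplectic, in particular Kähler, follows from Remark~\ref{remark definition symplectic}(4)). Now I would apply the machinery of Section~\ref{section deformations of contractions}: by Proposition~\ref{prop defo} the map $p:\Def^\lt(X,N)\to\Def^\lt(\widebar X)$ is an isomorphism, and likewise $p':\Def^\lt(X',N')\to\Def^\lt(\widebar X)$ is an isomorphism, where $N, N'$ are the one-dimensional negative-definite lattices attached to $c, c'$. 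Composing, we get an isomorphism $\psi:\Def^\lt(X,N)\xrightarrow{\sim}\Def^\lt(X',N')$ over $\Def^\lt(\widebar X)$. By Lemma~\ref{lemma generic fiber isomorphic} (applied to both $c$ and $c'$), over a very general point $t$ of $\Def^\lt(\widebar X)$ both $c_t:\scrX_t\to\widebar\scrX_t$ and $c'_{\psi(t)}:\scrX'_{\psi(t)}\to\widebar\scrX_t$ are isomorphisms; hence $\scrX_t\cong\scrX'_{\psi(t)}$, and this isomorphism is induced by $\phi$ (it is the identity away from the exceptional loci, which are empty). This gives a deformation of $X$ and of $X'$ to a common fibre, over which the two families are connected. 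Concatenating the parallel transport from $X$ along a path in $\Def^\lt(X,N)$ to $\scrX_t$, the isomorphism $\scrX_t\cong\scrX'_{\psi(t)}$, and the parallel transport back from $\scrX'_{\psi(t)}$ to $X'$ along a path in $\Def^\lt(X',N')$, produces a locally trivial parallel transport operator $H^2(X,\Z)\to H^2(X',\Z)$. One then checks it agrees with $\phi_*$: on the subspace $c^*H^2(\widebar X,\Z)$ this is clear by naturality of parallel transport and the fact (Lemma~\ref{lemma monodromy fixes cohomology downstairs}) that $H^2(\widebar X)$ is carried along, and on the complementary line $N$ it follows by comparing with the corresponding line $N'$ under $\phi_*$ (up to sign, which is pinned down by the orientation/effectivity of the contracted curve class, cf.\ Remark~\ref{remark dual curve}). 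Since locally trivial parallel transport operators preserve the Hodge structure when the endpoints have the same one — and here the identification is by the birational map $\phi$ over a fixed deformation — we get $\phi_*\in\Mth(X)$ (identifying $\Mth(X)=\Mth(X')$ via $\phi_*$ as is standard, or rather: $\phi_*\circ(\text{the parallel transport loop})$ lies in $\Mth(X)$, equivalently the composite around a loop built from $\phi$ and the above is in $\Mth$).

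Finally, to handle a general $\phi$ I would write it as $\phi=\phi_k\circ\cdots\circ\phi_1$ with each $\phi_i$ a flop (using that $X$ and $X'$ are minimal models in the same birational class, so connected by flops — this is where the $\Q$-factorial terminal hypothesis and \cite{BCHM10}-type results on the relative MMP for the morphism to a common ample model enter), and compose the resulting monodromy operators, which form a group; $\Mth(X)$ being a group, the composite $\phi_*=(\phi_k)_*\circ\cdots\circ(\phi_1)_*$ lies in it.

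\medskip

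\textbf{Main obstacle.} The technically delicate point is the reduction of an arbitrary birational map to a chain of flops, and then of each flop to the relative-Picard-rank-one contraction picture of Section~\ref{section deformations of contractions} with a \emph{negative definite} one-dimensional $N$: one must know that the relevant contractions $c:X\to\widebar X$ land in the category of primitive symplectic (in particular Kähler/projective) varieties and that $\widebar X$ is again of the right type, and one must control the sign ambiguity when matching the two embedded lines $N$ and $N'$ inside $H^2(\widebar X)^\perp$ so that the composite parallel transport operator is genuinely $\phi_*$ and not $\phi_*$ twisted by the reflection $R_N$. The cleanest way around the sign issue is exactly Remark~\ref{remark dual curve}: the class of the ruling curve is an \emph{effective} curve class on both sides, and $\phi_*$ is the unique isometry fixing $c^*H^2(\widebar X)$ and sending the effective ray to the effective ray, which is what the geometric parallel transport also does.
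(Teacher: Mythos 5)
Your overall strategy (deform until the birational map becomes an isomorphism, parallel transport, and compare with $\phi_*$) is the right circle of ideas --- it is the non-separatedness mechanism the paper exploits through \cite[Theorem~6.16]{BL18} --- but two steps of your reduction have genuine gaps. First, the construction of the common deformation is wrong as written: over $\Def^\lt(\widebar X)\cong\Def^\lt(X,N)$ the contraction $c_t:\scrX_t\to\widebar\scrX_t$ is \emph{never} an isomorphism, because Proposition~\ref{proposition contracted curves deform} says precisely that the contracted (flopping) curve deforms over all of $\Def^\lt(X,N)$, so $c_t$ contracts it for every $t$ in that locus. Lemma~\ref{lemma generic fiber isomorphic} concerns a very general point of the full space $\Def^\lt(X)$, whose image in $\Def(\widebar X,\pi)$ lies \emph{outside} $\Def^\lt(\widebar X)$ (these are non-locally-trivial deformations of $\widebar X$ that remove the flopping locus). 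To make $c_t$ and $c'_t$ isomorphisms you must leave the Hodge locus of $N$, and then matching the $X$-side and $X'$-side families over a common one-parameter deformation of $\widebar X$ is exactly the nontrivial content of \cite[Theorem~6.16]{BL18}, which the paper's proof invokes directly for an arbitrary birational map --- making the detour through Kawamata's decomposition into flops unnecessary.

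Second, and this is the crux, the identification of the resulting parallel transport operator $P$ with $\phi_*$ is not established. Both agree on $c^*H^2(\widebar X)$ and carry $N$ to $N'$, so the whole question is the sign on $N$, i.e.\ whether $P=\phi_*$ or $P=\phi_*\circ R_N$. Your proposed sign-pinning is backwards: for a flop, $\phi_*$ sends the effective flopping-curve ray to the \emph{negative} of the effective flopped-curve ray (a divisor negative on $R$ has strict transform positive on the flopped curve $R'$); an isometry fixing $c^*H^2(\widebar X)$ and sending effective ray to effective ray would preserve relative ampleness, forcing $\phi$ to be an isomorphism. Moreover, Remark~\ref{remark dual curve} concerns ruling curves of \emph{prime exceptional divisors} and does not apply to flopping curves, and you give no argument for what $P$ itself does on $N$ --- at the very general fibre there are no curves, so effectivity cannot be tracked along the way. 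The paper settles exactly this point by specializing the graph of the isomorphism over the punctured disk to a cycle $\Gamma_0=\Gamma_\phi+\sum_i Y_i$ and showing, via the argument of \cite[Corollary~5.2]{Huy99} and \cite[Proof of Theorem~2.5]{Huy03}, that the extra components $Y_i$ map to codimension $\geq 2$ and hence do not affect the action on $H^2$; some substitute for this step is indispensable in your approach as well.
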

\begin{proof}
By \cite[Theorem~6.16]{BL18}, there are one-parameter deformations $\scrX$, $\scrX'$ of $X$, $X'$ over a disk $\Delta$ such that $\scrX$ and $\scrX'$ are birational over $\Delta$ such that the birational map is an isomorphism over the pointed disk $\Delta^\times$. In particular, $X$ and $X'$ are inseparable in the moduli space. It follows from the proof that $\scrX$, $\scrX'$ are projective over $\Delta$ and the birational map between them induces the map $\phi$ on the central fiber. Let us consider the graph $\Gamma\subset \scrX\times_\Delta\scrX'$ of the birational map. Its central fiber contains the graph of $\phi$ but possibly also other things, i.e.
\[
\Gamma_0=\Gamma_\phi + \sum_{i} Y_i
\]
where $Y_i \subset X\times X'$ are cycles of dimension $\dim X$. By construction, the correspondence $\Gamma_0$ acts on $H^2(X,\Q) \to H^2(X',\Q)$ by a monodromy operator. By \cite[Corollary~5.2 item (ii)]{Huy99}, the $Y_i$'s map to subvarieties of positive codimension in $X$ and $X'$ and only if they map to a divisor, they contribute to the action of the correspondence $[\Gamma_0]_*$ on $\Pic_\Q(X)$. 
The argument in \cite[Proof of Theorem 2.5]{Huy03} applies {\it verbatim} here and shows precisely that the $Y_i$'s map to codimension $\geq 2$. The conclusion follows.
\end{proof}

With this result, the following definition makes sense.

\begin{definition}
Let $X$ be a projective primitive symplectic variety. We denote by $\Mtb(X)$ the subgroup of $\Mth(X)$ consisting of all monodromy operators induced by birational maps.
\end{definition}

%-------------------------------------------
\subsection{Cones in the real Picard group}\label{section cones}
%-------------------------------------------

Both the birational and the ordinary cone conjecture are statements about cones in the real Picard group which we now define. 

\begin{definition}
Let $X$ be a projective primitive symplectic variety and let $\Pic(X)_\R:=\Pic(X)\tensor \R$ be the real Picard group. Inside $\Pic(X)_\R$ we consider the following cones:
\begin{enumerate}
    \item The {\it ample cone} $\Amp(X)$ of $X$, i.e. the cone generated by all ample (integral) Cartier divisors on $X$. 
    \item The {\it nef cone} $\Nef(X)$ of $X$, i.e. the closure of the ample cone. Note that this is usually strictly bigger than the cone generated by all nef (integral) divisors).
    \item The {\it movable cone} $\Mov(X)$ of $X$, i.e. the cone generated by all effective (integral) divisors without fixed part. This cone is neither closed nor open in general. We denote by $\Movbar(X)$ its closure and by $\Movint(X)$ its interior. 
    \item The {\it positive cone} $\Pos(X)$ of $X$ which is the connected component containing $\Amp(X)$ of the cone of positive vectors of $\Pic(X)_\R$ with respect to the BBF form.
    \item If $X$ has $\Q$-factorial terminal singularities, we define the {\it birational ample cone} $\BAmp(X)$ of $X$ to be the the union of all $f^*\Amp(X')$ where $f:X\ratl X'$ is a birational map to another primitive symplectic variety $X'$ with $\Q$-factorial terminal singularities. Similarly, one defines $\BNef(X)$.
    \item The {\it effective cone} $\Eff(X)$ of $X$ which is the cone spanned by all effective divisors and the {\it big cone} $\Big(X)$ which is the cone spanned by all big divisors.
\end{enumerate}
\end{definition}

We  have the following inclusions:
\begin{equation}\label{eq cones inclusions}
    \Amp(X) \subset  \BAmp(X) \subset\Movint(X) \subset \Pos(X).
\end{equation}
The lefthand side inclusion is clear. 
The middle inclusion follows from the fact that birational maps between terminal $K$-trivial varieties are isomorphisms in codimension one (cf. \cite[Corollary 3.54]{KM98}). 
The righthand side inclusion is given in particular by the following lemma, which we record here also for later use. 

\begin{lemma}\label{lem:mov-pos}
Let $X$ be a projective primitive symplectic variety.
If $D$ is a divisor in the interior of the movable cone of $X$, 
then  $q_X(D, D')> 0$ for all $D'$ prime divisors on $X$.
\end{lemma}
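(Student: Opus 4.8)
The plan is to reduce the statement to the known description of the interior of the movable cone via the Zariski/divisorial stable base locus decomposition, exploiting that a movable class has no divisorial base locus. Concretely, let $D$ lie in the interior of $\Mov(X)$. First I would observe that $D$ can be written as a positive $\R$-linear combination of effective movable integral divisors $D_i$, i.e. divisors whose base locus has codimension $\geq 2$ (equivalently, whose linear systems $|mD_i|$ have no fixed component for $m$ divisible enough). It then suffices to show $q_X(D_i, D') \geq 0$ for every prime divisor $D'$, with strictness recovered at the end from $D$ being in the \emph{interior}.

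The key computation is the following: if $M$ is a movable effective divisor and $D'$ is a prime divisor, then $q_X(M, D') \geq 0$. To see this, recall that by Proposition~\ref{proposition bbf form} the BBF pairing is, up to a positive scalar, computed by the Fujiki-type relation $q_X(\alpha,\beta) \sim \int_X \alpha\cdot\beta\cdot(\sigma\bar\sigma)^{n-1}$ on the $(1,1)$-part, possibly with a correction term that vanishes for classes orthogonal to $\sigma$; for $(1,1)$-classes this reduces to a genuine intersection-theoretic pairing against the fixed positive class $(\sigma\bar\sigma)^{n-1}$. Passing to a resolution $\rho: Y \to X$ where pullbacks make sense, one writes $q_X(M,D')$ as an intersection number $\rho^*M \cdot \rho^*D' \cdot \rho^*(\sigma\bar\sigma)^{n-1}$ on $Y$; since $|mM|$ has no fixed component, a general member $M_m \in |mM|$ does not contain $D'$, so $M_m|_{D'}$ is an effective divisor on $D'$, and intersecting with the strictly positive transcendental class $(\sigma\bar\sigma)^{n-1}$ gives a nonnegative number. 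Hence $q_X(M, D') \geq 0$, and therefore $q_X(D, D') \geq 0$ for all prime $D'$.

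To upgrade nonnegativity to strict positivity I would use that $D$ is in the \emph{interior} of $\Mov(X)$: given a prime divisor $D'$, there is $\varepsilon > 0$ with $D - \varepsilon [D']$ still in $\Mov(X)$ (after clearing denominators, $D - \varepsilon D'$ is a positive combination of movable effective integral divisors). Applying the nonnegativity just proved to this class paired against $D'$ yields $q_X(D,D') \geq \varepsilon\, q_X(D', D')$. If $q_X(D',D') \geq 0$ we would need a little more care, but here one notes that if $q_X(D',D') \geq 0$ then $D'$ lies in the closure of the positive cone, and pairing the interior-movable (hence strictly $q_X$-positive, by \eqref{eq cones inclusions} and the signature $(3, b_2-3)$ with the light cone structure) class $D$ with any nonzero nef-adjacent class is strictly positive by the Hodge-index-type inequality on $H^{1,1}$; whereas if $q_X(D',D') < 0$, then $D'$ is (up to scaling) a prime exceptional class and $q_X(D,D') \geq \varepsilon q_X(D',D')$ is not yet enough, so instead I pair $D + \varepsilon D'$ — which is still movable for small $\varepsilon$ only if $D'$ is movable, which it is not — so the correct move is: $D$ lies in $\Pos(X)$ by \eqref{eq cones inclusions}, so $q_X(D,D) > 0$, and then the signature $(3, b_2-3)$ together with $q_X(D,\cdot) \geq 0$ on the prime divisor $D'$ and the fact that $D'$ is not proportional to $D$ (as $D'$ is not movable) forces $q_X(D,D') > 0$ by the light-cone inequality $q_X(x,y)^2 \geq q_X(x,x)q_X(y,y)$ with equality only for proportional classes in the relevant case, handling the sign.

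The main obstacle I expect is the strictness argument and the case analysis according to the sign of $q_X(D',D')$: the nonnegativity $q_X(D,D') \geq 0$ is the robust heart of the proof and follows cleanly from the movability-plus-positivity-of-$(\sigma\bar\sigma)^{n-1}$ argument, but pinning down \emph{strict} positivity requires invoking either the hyperbolic geometry of the positive cone (signature, reverse Cauchy--Schwarz) or a separate argument that a prime divisor $D'$ cannot be a limit of movable classes unless $q_X(D',D') \geq 0$ — and the latter is exactly the content one wants, so some care is needed to avoid circularity. I would therefore structure the final step purely via the interior hypothesis ($D - \varepsilon D' \in \Mov(X)$) plus the already-established nonnegativity, plus the elementary fact $q_X(D,D) > 0$ from $\Mov^\circ(X) \subset \Pos(X)$, and conclude $q_X(D, D') > 0$ by contradiction: if it were $0$, then $q_X(D - \varepsilon D', D - \varepsilon D') = q_X(D,D) + \varepsilon^2 q_X(D',D')$, and one derives a contradiction with $D$ being positive and $D-\varepsilon D'$ also needing to pair nonnegatively against $D$, $D'$ and itself, depending on the sign of $q_X(D',D')$.
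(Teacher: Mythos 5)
Your nonnegativity step ($q_X(M,D')\geq 0$ for $M$ movable and $D'$ prime) is essentially sound, and it is a genuinely different route from the paper's: the paper obtains it in one line from the Boucksom--Zariski decomposition of \cite{KMPP19} (the negative part is exactly the divisorial base locus, so a movable class coincides with its $q_X$-nef positive part), whereas you compute the pairing directly against $(\sigma\bar\sigma)^{n-1}$ using a general member of $|mM|$ not containing $D'$. That computation does work, but on a singular $X$ it needs a bit of care that you elide: $D'$ must be $\Q$-Cartier for $q_X(D,D')$ to be defined, and one must know that the class $[M_m]\cdot[D']$ is represented by an effective cycle (or positive current) before pairing it with the weakly positive form $(\sigma\bar\sigma)^{n-1}$; also beware that the pullback of $[D']$ to a resolution may share exceptional components with the pullback of $M_m$.

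The genuine gap is in the strictness step, precisely in the essential case $q_X(D',D')<0$ (prime exceptional classes, which are the whole point here). Perturbing by $-\varepsilon D'$ only gives $q_X(D,D')\geq \varepsilon\, q_X(D',D')$, which is vacuous when $q_X(D',D')<0$; the reverse Cauchy--Schwarz inequality $q_X(x,y)^2\geq q_X(x,x)q_X(y,y)$ for $x$ of positive square gives no information when $q_X(D',D')<0$, since the right-hand side is then negative; and the final ``contradiction'' is never actually derived --- the facts you assemble ($D\in\Mov(X)^\circ$, $q_X(D,D)>0$, and $q_X$-nefness of movable classes) are perfectly consistent with $q_X(D,D')=0$ and $q_X(D',D')<0$. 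The missing idea is to perturb in the direction of an \emph{ample} class rather than $D'$: since $D$ is interior, $D-\varepsilon A\in\Mov(X)$ for $A$ ample and $0<\varepsilon\ll 1$, so your nonnegativity gives $0\leq q_X(D-\varepsilon A,D')=q_X(D,D')-\varepsilon\, q_X(A,D')$, and since $q_X(A,D')>0$ for any effective divisor $D'$ (standard positivity of the BBF pairing of an ample class against an effective class), this forces $q_X(D,D')>0$ with no case analysis on the sign of $q_X(D',D')$. This is exactly how the paper concludes.
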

\begin{proof}
To start with recall that the Boucksom-Zariski decomposition holds for any effective $\Q$-divisor $D$ on primitive symplectic varieties, by \cite[Theorem 1.1]{KMPP19}. Moreover if $D=P+N$ is such a decomposition, 
for all integers $k>0$ such that $kP$ and $kD$ are integral,
we have an isomorphism 
$$
 H^0(X,\mathcal O_X(kP))\cong H^0(X,\mathcal O_X(kD)),
$$
which implies that 
\begin{equation}\label{eq:star}
    N \text{ coincides with the divisorial base locus of } D.
\end{equation}

Therefore if $D$ is a movable line bundle on $X$, 
then $D$ is $q_X$-nef (i.e. $q_X(D, D')\geq 0$ for all $D'$ prime divisors on $X$).
Now suppose that $q_X(D,D')=0$ for some prime divisor $D'$. Since $D$ lies in the interior of the movable cone, for $0<\varepsilon \ll 1$ and $A$ an ample class, then $D-\varepsilon A\in \Mov(X)^\circ$ which yields the following contradiction  
$$
q_X(D-\varepsilon A, D')= -\varepsilon q_X( A, D')<0.
$$
\end{proof}
The following lemma adds another inclusion to  chain (\ref{eq cones inclusions}).

\begin{lemma}\label{lemma pos in big}
Let $X$ be a projective primitive symplectic variety. Then $\Pos(X) \subset \Big(X)$.
\end{lemma}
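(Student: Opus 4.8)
\textbf{Proof plan for Lemma \ref{lemma pos in big}.}
The plan is to show that every class in the positive cone has strictly positive top self-intersection, and then to invoke a Riemann--Roch / asymptotic estimate to deduce bigness. More precisely, let $\alpha \in \Pos(X) \cap \Pic(X)_\R$; after scaling we may assume $\alpha$ is rational, so some multiple $D = m\alpha$ is an integral Cartier divisor with $q_X(D) > 0$. The first step is to observe that since the signature of $q_X$ on $H^2(X,\R)$ is $(3, b_2(X)-3)$ and the positive cone is (one of the two components of) the set $\{q_X > 0\}$ intersected with the $(1,1)$-part, any ample class $A$ satisfies $q_X(A) > 0$ and lies in the same component as $D$; hence $q_X(D, A) > 0$ by the light-cone inequality (two elements of the same component of a cone over which the form is positive pair positively). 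In particular $D$ is pseudoeffective-adjacent: $q_X(D,A) > 0$ for all ample $A$.

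The second step is to convert the condition $q_X(D) > 0$ into an estimate on $h^0(X, \sO_X(kD))$. By the Fujiki-type relation available for primitive symplectic varieties (see Proposition~\ref{proposition bbf form}, where $q_X$ is defined so that $q_X(\alpha)^n$ is proportional to $\int_X \alpha^{2n}$ up to the Fujiki constant $c_X > 0$), one has $\int_X D^{2n} = c_X\, q_X(D)^n > 0$. Then I would apply the asymptotic Riemann--Roch theorem on the normal projective variety $X$ (or pass to a resolution $Y \to X$ and use that $H^2(X,\Z)\hookrightarrow H^2(Y,\Z)$ is compatible with intersection numbers, as in Remark~\ref{remark torsion cohomology}): the Euler characteristic $\chi(X, \sO_X(kD))$ grows like $\frac{k^{2n}}{(2n)!}\int_X D^{2n} + O(k^{2n-1})$, which is positive and of top order. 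To upgrade $\chi$ to $h^0$, I would use that $D$ lies in the same $q_X$-component as an ample class together with the standard dichotomy: either $D$ or $-D$ is big once $\int_X D^{2n} > 0$ (this is Siu-type / asymptotic Riemann--Roch reasoning on a projective variety), and $-D$ cannot be pseudoeffective because $q_X(-D, A) < 0$ while the pairing of a pseudoeffective class with an ample class is $\ge 0$. Hence $D$ — and therefore $\alpha$ — is big, i.e. $\Pos(X) \subseteq \Big(X)$.

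The step I expect to be the main obstacle is the passage from the numerical positivity $\int_X D^{2n} > 0$ to actual bigness in the possibly singular setting: on a smooth projective variety this is classical (asymptotic Riemann--Roch plus the big/pseudoeffective dichotomy), but here $X$ has $\Q$-factorial terminal — in particular rational — singularities, so one must be slightly careful that higher cohomology does not swamp $h^0$. The cleanest route is probably to pull back to a resolution $Y \to X$, where $\pi^*D$ has the same top self-intersection, run the argument there to conclude $\pi^*D$ is big on $Y$, and then push down: $H^0(Y, \sO_Y(k\pi^*D)) \hookrightarrow H^0(X, \sO_X(kD))^{\vee\vee}$-type comparisons, or simply note that bigness of $\pi^*D$ on $Y$ forces bigness of $D$ on $X$ since $\pi_*\sO_Y(k\pi^*D) \supseteq \sO_X(kD)$ and volumes are birational invariants. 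Alternatively, one can cite directly that the Boucksom--Zariski theory of \cite{KMPP19} — already invoked in Lemma~\ref{lem:mov-pos} — gives the characterization of the big cone as the interior of the pseudoeffective cone, and that $\Pos(X)$ is contained in that interior because $q_X(D,A) > 0$ for the ample generator $A$ forces $D$ to be in the interior of $\Eff(X)$; this is the shortest path and avoids the resolution step entirely.
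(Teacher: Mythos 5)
Your reduction to an integral class $D$ with $q_X(D)>0$ and the Fujiki computation $\int_X D^{2n}=c_X\,q_X(D)^n>0$ are fine, but the step you yourself flag as the obstacle is a genuine gap, not a technicality: in dimension $>2$, asymptotic Riemann--Roch does \emph{not} yield the dichotomy ``$\int_X D^{2n}>0\Rightarrow D$ or $-D$ is big''. The Euler characteristic $\chi(kD)$ can be dominated by intermediate cohomology; for instance on a product of three general curves, $D=a_1F_1+a_2F_2+a_3F_3$ with $a_1>0$, $a_2,a_3<0$ has $D^3>0$ while neither $\pm D$ is even pseudoeffective. The statement you want is hyperk\"ahler-specific (for irreducible symplectic manifolds it is a theorem of Huybrechts, whose proof needs the projectivity criterion/Demailly--P\u{a}un input, not just Riemann--Roch), and in the singular setting it is essentially the content of the lemma itself. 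Your two proposed repairs do not close the gap: passing to a resolution $Y\to X$ preserves the numerics but destroys the symplectic structure, so the same unproved dichotomy is needed on $Y$; and the appeal to \cite{KMPP19} is circular, because the Boucksom--Zariski decomposition takes a pseudoeffective class as \emph{input} --- the assertion that $q_X(D,A)>0$ for $A$ ample forces $D$ into the interior of $\Eff(X)$ is exactly what has to be proved, and pairing positively with an ample class via $q_X$ does not by itself give (pseudo)effectivity.

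The paper avoids the higher-cohomology problem altogether by a deformation argument: for integral $\alpha\in\Pos(X)$ one deforms $X$ along the Hodge locus of $L_\alpha$ to a very general nearby fiber $X_t$ of Picard rank one, where the projectivity criterion \cite[Theorem~6.9]{BL18} makes $L_t$ ample, so $h^0(X_t,L_t^{\otimes m})$ grows like $m^{\dim X}$; semicontinuity of $h^0$ then transfers this growth to $L_\alpha$ on $X$, giving bigness, and openness plus convexity of $\Pos(X)$ reduces the general case to this one. If you want to salvage a direct argument on $X$ itself, you would need a singular analogue of Huybrechts' ``$q_X(L)>0\Rightarrow L$ or $L^{-1}$ big'', which is not something you can extract from asymptotic Riemann--Roch alone.
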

\begin{proof}
Since $\Pos(X)$ is open and convex, it is enough to show that rational, even integral classes in $\Pos(X)$ are big. Let $\alpha$ be such a class and let $L_\alpha$ be the corresponding line bundle. Take a general one parameter deformation in the Hodge locus of $L_\alpha$. Then on a nearby fiber $X_t$ with Picard rank one, the corresponding line bundle $L_{t}$ is ample by the projectivity criterion \cite[Theorem~6.9]{BL18}. In particular, the dimension of $H^0(X_t,L_t^{\tensor m})$ grows as a polynomial of degree $\dim X$ in $m$. By semicontinuity, sections of powers of $L_\alpha^{\tensor m}$ grow at least as much and thus $\alpha \in \Big(X)$.
\end{proof}

Furthermore, let us observe that on any projective variety $X$ the big cone is contained in the effective cone 
\begin{equation}\label{eq big in eff}
    \Big(X) \subset \Eff(X).
\end{equation}

%-------------------------------------------
%-------------------------------------------
\subsection{A Hodge-theoretic global Torelli}\label{ss:torelli}
%-------------------------------------------

In the proof of the decomposition of the monodromy group $\Mth(X)$ as a semidirect product of $\Mtb(X)$ with the subgroup of reflections in prime exceptional divisors we will need the following Hodge-theoretic version of the global Torelli theorem for projective primitive symplectic varieties with $\Q$-factorial terminal singularities. For the analogous statement in the smooth case, see \cite[Theorem 1.3, item (2)]{Mar11}. 

\begin{theorem}\label{thm:hodge-torelli}
Let $X$ and $X'$ be projective primitive symplectic varieties with $\Q$-factorial terminal singularities such that they are equivalent under locally trivial deformations.
Let 
$$
p:H^2(X,\Z) \to H^2(X',\Z)
$$  
be a locally trivial parallel transport operator which is a Hodge isometry.  Assume $b_2(X) \geq 5$. 
Then there is a birational map $f:X\ratl X'$ with $f_*=p$ if and only if  the restriction  of $p$ to $\Pic(X)$ satisfies
\begin{equation}\label{eq:equiv}
  p(\BAmp(X))\cap \BAmp(X') \not= \emptyset.
\end{equation}
\end{theorem}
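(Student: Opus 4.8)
The strategy is to follow the template of Markman's proof \cite[Theorem 1.3]{Mar11} in the smooth case, replacing each smooth ingredient by its singular counterpart established above. The ``only if'' direction is essentially formal: if $f:X\ratl X'$ is a birational map with $f_*=p$, then since $X,X'$ have $\Q$-factorial terminal singularities, $f$ is an isomorphism in codimension one, so $f^*$ identifies $\BAmp(X')$ with $\BAmp(X)$ (by the very definition of the birational ample cone as the union of pullbacks of ample cones over all birational models with $\Q$-factorial terminal singularities). Hence $p(\BAmp(X))=\BAmp(X')$ and in particular \eqref{eq:equiv} holds, using that $\BAmp(X')$ is a nonempty open cone.

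For the ``if'' direction, suppose $\alpha\in p(\BAmp(X))\cap\BAmp(X')$. First I would reduce to the case where $\alpha$ is itself an ample class. Pick $\beta\in\BAmp(X)$ with $p(\beta)=\alpha$; by definition of $\BAmp(X)$ there is a birational map $g:X\ratl X_1$ to another $\Q$-factorial terminal primitive symplectic variety with $\beta\in g^*\Amp(X_1)$, and by Theorem~\ref{theorem birational monodromy} the induced map $g_*\in\Mth(X)$; similarly there is $h:X'\ratl X_1'$ with $\alpha\in h^*\Amp(X_1')$ and $h_*\in\Mth(X')$. Replacing $X$ by $X_1$, $X'$ by $X_1'$, and $p$ by $h_*\circ p\circ g_*^{-1}$ (still a locally trivial parallel transport operator which is a Hodge isometry, by composition), we are reduced to showing: if $p:H^2(X,\Z)\to H^2(X',\Z)$ is a locally trivial parallel transport Hodge isometry carrying an ample class $a\in\Amp(X)$ to an ample class $a'=p(a)\in\Amp(X')$, then $p=f_*$ for a birational (in fact now one expects an iso-in-codimension-one) map. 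For this I would invoke the Hodge-theoretic global Torelli theorem for primitive symplectic varieties in its ``isomorphism'' form (from \cite{BL18,BL21}, which requires $b_2\geq5$): since $X$ and $X'$ are equivalent under locally trivial deformations and $p$ is a parallel transport Hodge isometry sending one Kähler/ample class into the ample cone of $X'$, the global Torelli theorem produces a bimeromorphic — indeed, since ample goes to ample, an actual biregular — isomorphism $f$ with $f^*=p$ (or $f_*=p$ after the usual identification); here one may need to first pass to the associated parallel transport in a locally trivial family connecting $X$ to $X'$, apply local Torelli \cite[Proposition 5.5]{BL18} plus the Hausdorffness/separatedness analysis à la Verbitsky to upgrade to a global statement.

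The main obstacle is the passage, in the reduction step, from a class merely in $\BAmp$ to an honest ample class on a birational model together with the matching up of the monodromy: one must know that the birational maps $g,h$ chosen to witness membership in the birational ample cone do induce \emph{parallel-transport} monodromy operators compatible with $p$, which is exactly the content of Theorem~\ref{theorem birational monodromy}, and that after the reduction the resulting $p$ still sends ample to ample rather than only into $\overline{\Mov}$ or $\Pos$. A secondary subtlety is that the global Torelli theorem for singular symplectic varieties as available in \cite{BL18,BL21} is stated for locally trivial families and one should check it applies verbatim to produce the birational map with the prescribed action on $H^2$; but given the hypotheses ($b_2\geq5$, $\Q$-factorial terminal, equivalence under locally trivial deformation), this is exactly the setting in which that machinery was built, so I expect only bookkeeping rather than a genuine new difficulty here. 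Finally, one should remark that the birational map $f$ produced is automatically an isomorphism in codimension one and hence $f_*$ preserves the whole birational ample cone, recovering $p(\BAmp(X))=\BAmp(X')$, which closes the logical loop.
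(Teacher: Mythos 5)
Your ``only if'' argument and your reduction to the case of an ample class going to an ample class are both fine; indeed that reduction (using Theorem~\ref{theorem birational monodromy} to know that the auxiliary maps $g_*,h_*$ are parallel transport Hodge isometries) is exactly the way the hypothesis \eqref{eq:equiv} is meant to be exploited. The gap is at the heart of the ``if'' direction: after the reduction you defer everything to an ``isomorphism form'' of global Torelli for singular symplectic varieties asserting that a locally trivial parallel transport Hodge isometry sending an ample class to an ample class is induced by an isomorphism $\phi$ with $\phi_*$ equal to the \emph{given} isometry. No such statement with the prescribed action on $H^2$ is available to cite: what \cite{BL18} provides (and what the paper invokes as Theorems~1.1/1.2 and 6.16 of \cite{BL18}) is the existence of \emph{some} bimeromorphic map between inseparable marked varieties, with no control on its action on cohomology. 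In fact, in this paper the refined ``isomorphism with $f_*=p$'' statement is a \emph{consequence} of Theorem~\ref{thm:hodge-torelli} (see the remark immediately after it, and its use in Theorem~\ref{thm:semidirect}, item (4)), so as written your plan is essentially circular: the hard content is precisely to produce a map inducing the given $p$, and neither the reduction nor ``local Torelli plus Hausdorffness à la Verbitsky'' supplies it, since separatedness arguments only yield some birational map.

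The missing mechanism, which is what the paper's proof consists of, is the degeneration-of-the-graph argument: choosing markings so that $(X,\varphi\circ p)$ and $(X',\varphi)$ are inseparable points with the same period, one gets by \cite[Theorem~6.16]{BL18} one-parameter locally trivial deformations isomorphic over the punctured disk, and the graphs $\Gamma_t$ of these isomorphisms degenerate to a cycle $\Gamma_0=Z+\sum_i Y_i\subset X\times X'$ with $[\Gamma_0]_*=p$ and $Z$ the graph of a birational map $f:X\ratl X'$; then the argument of \cite[Cor.~5.2]{Huy99} and \cite[proof of Thm.~2.5]{Huy03} shows that the $Y_i$ map to codimension at least two, so that $p=[\Gamma_0]_*=f_*$ on $H^2$. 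It is in this last step that the positivity furnished by \eqref{eq:equiv} (in your reduced setting, ample mapping to ample) is consumed, killing possible divisorial corrections to $f_*$ --- and it cannot be dispensed with, since otherwise every element of $\Mth(X)$, e.g.\ a reflection $R_E$ in a prime exceptional divisor, would lie in $\Mtb(X)$, contradicting the semidirect product decomposition. Your proposal omits this argument entirely, so the key identity $f_*=p$ remains unjustified.
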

\begin{proof}
The ``only if'' direction follows from Theorem~\ref{theorem birational monodromy}. 
Let $\varphi$ be a marking of $X'$. By hypothesis, $(X',\varphi)$ and $(X,  \varphi\circ p)$ lie in the same connected component of the marked moduli space and have the same period. Hence, by \cite[Theorem~1.1]{BL18} (which uses the hypothesis $b_2(X) \geq 5$), the varieties $X$ and $X'$ are birational so that by \cite[Theorem~6.16]{BL18}, there are one parameter deformations $\scrX$ and $\scrX'$ of $X$ and $X'$ over a disk $\Delta$ which are isomorphic over the pointed disk and such that the graph $\Gamma_t \subset \scrX_t\times \scrX'_t$ of this isomorphism for $t\neq 0$ degenerates to a cycle
\[
\Gamma_0 = Z + \sum_i Y_i \subset X \times X'
\]
where the correspondence $[\Gamma_0]_*:H^2(X,\Q) \to H^2(X',\Q)$ coincides with $p$ and $Z$ is the graph of a birational map $f:X \ratl X'$. 
Again, as in the proof of Theorem \ref{theorem birational monodromy}, we invoke the argument spelled out in
 \cite[Proof of Theorem 2.5]{Huy03} to deduce that the $Y_i$'s map to codimension $\geq 2$. Hence $[\Gamma_0]_*=p=f_*$ and the conclusion follows.
\end{proof}
Notice that if $f:X\dashrightarrow X'$ is the birational map such that $f_*=p$ given by Theorem~\ref{thm:hodge-torelli} and $p(\Amp(X))\cap \Amp(X')\neq \emptyset$, then $f$ must be an isomorphism.

%-------------------------------------------

%-------------------------------------------
%-------------------------------------------
%-------------------------------------------
\section{The semidirect product decomposition}\label{section semidirect}
%-------------------------------------------
%-------------------------------------------
%-------------------------------------------

Let $X$ be a projective primitive symplectic variety with $\Q$-factorial, terminal singularities. The main goal of this section is to show that the group $\Mth(X)$ of Hodge monodromy operators is equal to the semidirect product of the normal subgroup generated by reflections in prime exceptional divisors (cf. Theorem \ref{theorem reflection}) and of the subgroup $\Mtb(X)$ of {monodromy
operators induced by birational maps from $X$ to itself} (cf. Theorem \ref{theorem birational monodromy}). 

From such result, as it is done in the smooth case in \cite[Section~6]{Mar11}, we will deduce the birational cone conjecture in Section \ref{sec:bircone}. We follow Markman's approach \cite[Sections~5 and 6]{Mar11} and using our results from previous sections we arrive at the proof of the cone conjecture. 

\begin{definition}\label{def:se}
A line bundle $L\in \Pic(X)$ is called {\it stably exceptional} if there exists a closed analytic subset $Z\subset \Def(X,L)$ of positive codimension such that for all
$t\in \Def(X,L)\setminus Z$ the line bundle $L_t$ equals $\mathcal O_{X_t}(a E_t)$ where  
$E_t$ is a prime exceptional divisor on $X_t$ and $a\in \mathbb Z_{>0}$ the Cartier index  of $E_t$.
\end{definition}

Notice that on special points (i.e. along $Z$) a stably exceptional line bundle $L$ may have $h^0 >1$, see Example~\ref{example stably exceptional degenerates}. Moreover, even when $h^0 =1$ the divisor may be reducible or non-reduced, i.e. not {\it prime} exceptional, and for this reason we prefer not to use the terminology {\it stably prime exceptional} introduced in \cite[Definition 6.4]{Mar11} in the smooth case. It is important to notice that the class of a stably exceptional line bundle is in the effective cone by the semicontinuity of $h^0$.

%-------------------------------------------
\subsection{Moduli spaces of primitive symplectic varieties}\label{section moduli}
%-------------------------------------------

 Given a lattice $\Lambda$ with quadratic form $q$, a $\Lambda$-marking of $X$ is an isomorphism $\phi : (H^2(X, \Z)_\tf , q_X ) \cong (\Lambda, q)$.
 
 By \cite[Theorem 4.11]{BL18} the space $\Def^\lt(X)$ is smooth, and by patching together these smooth local charts of locally trivial Kuranishi families via the local Torelli theorem proved in \cite[Corollary 5.8]{BL18} we can define (exactly as in the smooth case) the analytic coarse moduli space of $\Lambda$-marked primitive symplectic varieties. We will denote it by $\mathfrak M^{\rm lt}_\Lambda$.

\begin{proposition}\label{prop:prime-is-se}
Let $X$ be a projective primitive symplectic variety with terminal and $\Q$-factorial singularities and $E\subset X$ a prime exceptional divisor. Then
\begin{enumerate}
\item[(1)] The line bundle $\mathcal O_X(E)$ is stably exceptional. 
\item[(2)] Let $X'$ be a primitive symplectic variety, locally trivial deformation equivalent to $X$ and $g:H^2(X,\mathbb Z)\to H^2(X',\mathbb Z)$ a locally trivial parallel trasport operator which is a Hodge isometry. Set $\alpha:=g([E])\in H^{1,1}(X',\mathbb Z)$. Then either $\alpha$ or $-\alpha$ is the class of a stably exceptional line bundle on $X'$. 
\end{enumerate}
\end{proposition}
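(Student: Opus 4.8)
The plan is to prove (1) first and then derive (2) by transport of structure along $g$. For (1), I would argue that the family of reflections/contractions behaves well in a locally trivial family: by Theorem~\ref{theorem reflection} the reflection $R_E$ is an integral monodromy Hodge isometry, and by \cite[Theorem 1.2]{LMP21} combined with \cite[Corollary~2.29]{BGL20} the divisor $aE$ (with $a$ the Cartier index) can be contracted on a locally trivial birational model. The key point is that the Hodge locus of $[aE]$ inside $\Def^\lt(X)$ equals $\Def(X,L)$ for $L=\sO_X(E)$, and that over a dense open subset of this Hodge locus the line bundle $L_t$ remains the class of a \emph{prime} exceptional divisor $E_t$ of the same Cartier index $a$. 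To see this I would use that the contraction $c:X\to\widebar X$ of $aE$ deforms over the whole Hodge locus (Proposition~\ref{proposition contracted curves deform} and Proposition~\ref{prop defo}, with $N$ spanned by $[aE]^\vee$): the relative Picard rank one contraction deforms, its exceptional divisor $E_t$ deforms as a prime divisor over a general $t$, and $q_{X_t}(E_t)=q_X(E)<0$ stays negative because $q$ is a parallel transport invariant. The locus $Z$ where $E_t$ fails to be prime (e.g. becomes reducible, or $h^0$ jumps as in Example~\ref{example stably exceptional degenerates}) is a proper closed analytic subset by semicontinuity, which is exactly the definition of stably exceptional.

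For (2), I would use that $g$ is a locally trivial parallel transport operator: by definition it is realized by a path in $\mathfrak M^{\rm lt}_\Lambda$ connecting $(X,\phi)$ to $(X',\phi\circ g^{-1})$, or more concretely by a chain of locally trivial deformations. Since being stably exceptional is a statement about the behaviour of the line bundle over the (locally trivial) Kuranishi space, and since $\Def(X,L)\cong \Def(X', L')$ under the identification induced by parallel transport (both being cut out inside $\Def^\lt$ by the same Hodge condition on the parallel-transported class $\alpha=g([E])$), the generic fibre $L'_t$ over $\Def(X',\sO_{X'}(\alpha))$ is again $\sO_{X'_t}(a E'_t)$ for $E'_t$ prime exceptional. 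The only subtlety is a sign: $g([E])$ is a $(1,1)$-class of negative square, but it need not itself be effective on $X'$; however exactly one of $\pm\alpha$ lies in the effective cone (they cannot both be effective since $q(\alpha)<0$ forces $\alpha,-\alpha$ to be represented by rigid divisors with disjoint, hence opposite, support considerations — more precisely, an effective class of negative BBF square is represented by an exceptional divisor, and both $\alpha$ and $-\alpha$ effective would contradict $q(\alpha)=q(-\alpha)<0$ via Lemma~\ref{lem:mov-pos} or the Boucksom--Zariski decomposition). Replacing $\alpha$ by $-\alpha$ if necessary, the argument for (1) applied on $X'$ shows that $\alpha$ (resp.\ $-\alpha$) is the class of a stably exceptional line bundle.

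The main obstacle I anticipate is controlling the generic fibre over the Hodge locus precisely enough: one must show not merely that \emph{some} effective divisor with class $a[E_t]$ exists for general $t$, but that it is an irreducible reduced divisor whose Cartier index is exactly $a$ and whose BBF square is negative, so that it qualifies as a prime exceptional divisor in the sense of Definition~\ref{definition prime exceptional divisor}. This requires combining the local triviality of the deformation of the contraction $c:X\to\widebar X$ (so that the exceptional fibre, hence $E_t$, varies in a locally trivial way and stays irreducible generically) with the deformation invariance of the Cartier index, which should follow from the fact that $\widebar\sX\to\Def^\lt(\widebar X,c)$ is locally trivial and the Cartier index is determined by the transversal ADE type along the codimension-two stratum $B$, which is constant in the family by Theorem~\ref{thm:refl-are-mon}. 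Once this is in place, both (1) and (2) follow by purely formal transport-of-structure arguments, and the sign issue in (2) is handled by the dichotomy between effective and anti-effective classes of negative square.
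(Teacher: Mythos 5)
Your part (2) is essentially the paper's own argument: transport the class into the marked moduli space via $\varphi':=\varphi\circ g^{-1}$, observe that $(X',\varphi')$ lies in the Hodge locus of $e=\varphi([E])$, and conclude from the generic behaviour along that locus, the sign being forced because at most one of $\pm\alpha$ can pair positively with an ample class (your phrase ``$\Def(X,L)\cong\Def(X',L')$ under parallel transport'' is not literally correct --- these are germs at different points --- but what you need, namely that both are germs of the same global Hodge locus in one connected component of $\mathfrak M^{\lt}_\Lambda$, is exactly what the paper uses). The genuine gap is in part (1), at its crux. First, the contraction $c:X\to\widebar X$ of (a multiple of) $E$ that you propose to deform via Proposition~\ref{prop defo} and Proposition~\ref{proposition contracted curves deform} need not exist on $X$ itself: \cite[Theorem 1.2]{LMP21} only provides it on a birational model which is a locally trivial deformation of $X$, so those propositions cannot be applied on $X$ as written, and transferring the conclusion back to the germ $\Def(X,L)$ at $X$ requires an argument of the same nature as the one you are trying to give. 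Second, and more seriously, the statement that for general $t$ in the Hodge locus the divisor representing $L_t$ is irreducible and reduced, of negative square and of Cartier index exactly $a$, is not a consequence of ``semicontinuity''; you flag this yourself as the main obstacle, and your sketched resolution (local triviality of the deformed contraction plus constancy of the transversal ADE type via Theorem~\ref{thm:refl-are-mon}) is left unproven and imports machinery that is not needed.

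The paper's proof of (1) deforms no contraction at all: by \cite[Theorem 1.2]{LMP21} the divisor $E$ is uniruled, a general ruling curve $R$ satisfies $R^\vee\in\Q_{>0}[E]$ (so $\Hdg_{[R]}=\Hdg_{[E]}$), $R$ deforms along its entire Hodge locus producing on each $X_t$ a uniruled divisor $E_t$ ruled by the deformed curves, and by item (2.b) of that theorem $E_t$ is prime on an open subset of the Hodge locus --- which is precisely the content of being stably exceptional. To complete your route you would either have to quote that statement (making the contraction argument superfluous) or actually prove generic primality and the Cartier index claim, neither of which your proposal does.
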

\begin{proof}
By \cite[Theorem 1.2]{LMP21}, the divisor is uniruled, and if $R$ is a general curve in the ruling of $E$ we have that 
$R^\vee \in \Q_{>0} [E]$ (in particular, the two Hodge loci coincide $\Hdg_{[R]}=\Hdg_{[E]}$), the curve $R$ deforms along its Hodge locus and for all $t\in \Hdg_{[R]}$ the variety $X_t$ contains a uniruled divisor $E_t$ ruled by deformations $R_t$ of $R$. By \cite[Theorem 1.2, item (2.b)]{LMP21}, the divisor $E_t$ is prime on an open set of the Hodge locus, so that $\mathcal{O}_X(E)$ is stably prime exceptional and item (1) holds.

(2) Let now $\varphi:H^2(X,\mathbb Z)\to \Lambda$ be a marking and $e:=\varphi([E])$. 
Let $\mathfrak M\subset \mathfrak M^{\rm lt}_\Lambda$ be a connected component of the  locally trivial $\Lambda$-marked moduli space (defined in Section~\ref{section moduli}) that contains 
$(X,\varphi)$ and let $\mathfrak p$ be the period map. 
Consider the marking $\varphi'$ on $X'$ given by 
$$
 \varphi':= \varphi \circ g^{-1}.
$$ 
Notice that by construction $(\varphi')^{-1}(e)= \alpha=g([E])$. We deduce that
$$
 (X',\varphi')\in (\mathfrak p_{|\mathfrak M})^{-1}(e^\perp)= \Hdg_{[E]}. 
$$
For general $t\in \Hdg_{[R]}$ we have $\rho(X_t)=1$ and again the deformation $R_t$ of $R$ verifies $R_t^\vee = \Q_{>0} [E_t]$.
Therefore, on such $X_t$ any parallel transport of $\alpha$ has a non-zero (possibly negative) multiple which is exceptional. 
\end{proof} 
By Theorem~\ref{theorem reflection}, reflections $$R_\ell : \alpha\mapsto \alpha - 2\frac{q_X(\alpha,\ell)}{q_X(\ell)}\ell
$$ 
by classes $\ell$ of prime exceptional line bundles are monodromy Hodge isometries. As stably exceptional line bundles are exceptional on a small deformation insider their Hodge locus, parallel transport gives that $R_\ell$ is already a monodromy Hodge isometry if $\ell$ is only \emph{stably exceptional}. Here we verify the following.

\begin{lemma}\label{lem:normality}
The subgroup $W_X \subset \Mth(X)$ generated by  reflections 
by classes $\ell$ of stably exceptional line bundles is normal.
\end{lemma}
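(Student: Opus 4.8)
The goal is to show that the subgroup $W_X \subset \Mth(X)$ generated by reflections $R_\ell$ in classes $\ell$ of stably exceptional line bundles is normal. The natural strategy is to verify directly that the conjugate $g R_\ell g^{-1}$ of such a generator by an arbitrary element $g \in \Mth(X)$ is again a reflection of the same type, hence lies in $W_X$. The algebraic content of this is the standard fact that for any isometry $g$ of a quadratic space and any vector $\ell$ of nonzero square, $g R_\ell g^{-1} = R_{g(\ell)}$; this is immediate from the formula for $R_\ell$ and the fact that $g$ preserves $q_X$. So the real point is a geometric one: I must show that if $\ell$ is the class of a stably exceptional line bundle on $X$ and $g \in \Mth(X)$, then $g(\ell)$ (or $-g(\ell)$) is again the class of a stably exceptional line bundle on $X$.

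\textbf{Key steps.} First I would record the algebraic identity $g R_\ell g^{-1} = R_{g(\ell)}$ for $g \in \O(H^2(X,\Z),q_X)$ and any $\ell$ with $q_X(\ell)\neq 0$, noting also that $R_\ell = R_{-\ell}$ so the sign ambiguity is harmless. Second, and this is the heart of the matter, I would apply Proposition~\ref{prop:prime-is-se}(2) with $X' = X$ and with the Hodge isometry being $g$ itself: since $g \in \Mth(X)$ is a locally trivial parallel transport operator that is a Hodge isometry, and since $\ell$ is (by parallel transport inside its Hodge locus, as explained in the paragraph preceding the lemma) reducible to the prime exceptional case, Proposition~\ref{prop:prime-is-se}(2) tells us that either $g(\ell)$ or $-g(\ell)$ is the class of a stably exceptional line bundle on $X$. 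Strictly speaking Proposition~\ref{prop:prime-is-se}(2) is phrased for $[E]$ with $E$ prime exceptional; to apply it to a merely stably exceptional $\ell$ I would first pass to a very general point $t$ in the Hodge locus of $\ell$ where $L_t = \sO_{X_t}(aE_t)$ with $E_t$ prime exceptional, apply the proposition there to $g_t(\ell)$ (transporting $g$ along the deformation), and then transport the resulting stably exceptional class back to $X$ — the class of a stably exceptional line bundle remains stably exceptional under locally trivial parallel transport Hodge isometries, essentially by definition of $\Def(X,L)$ and the fact that the Hodge locus is preserved. Third, combining the two steps: $g R_\ell g^{-1} = R_{g(\ell)} = R_{\pm g(\ell)}$ is a reflection in the class of a stably exceptional line bundle, hence an element of $W_X$; since conjugates of generators of $W_X$ lie in $W_X$, the subgroup $W_X$ is normal.

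\textbf{Main obstacle.} The only genuine subtlety is bookkeeping the passage between $\ell$ being stably exceptional and the literal hypothesis of Proposition~\ref{prop:prime-is-se}(2), which concerns the class of an honest prime exceptional divisor $E$. One must make sure that (a) the Hodge locus $\Hdg_\ell$ makes sense and that $g$ transports into the Hodge locus of $g(\ell)$ compatibly, and (b) that ``stably exceptional'' is stable under the relevant parallel transport — both of which follow from the constructions in Section~\ref{section deformations of contractions} and Definition~\ref{def:se}, together with the observation (already made just before the lemma) that $R_\ell$ is a monodromy Hodge isometry whenever $\ell$ is stably exceptional. Once this is set up, the argument is the formal ``conjugates of generators lie in the subgroup'' argument and is entirely routine; I expect the write-up to be short.
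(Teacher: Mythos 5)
Your proposal is correct and follows essentially the same route as the paper: the conjugation identity $gR_\ell g^{-1}=R_{g(\ell)}$ combined with Proposition~\ref{prop:prime-is-se}, item~(2), to see that $\pm g(\ell)$ is again stably exceptional, hence $R_{g(\ell)}\in W_X$. Your extra bookkeeping reducing a stably exceptional class to the prime exceptional case at a very general point of its Hodge locus is a point the paper leaves implicit, so no changes are needed.
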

\begin{proof}
Let $R_\ell $ be a reflection by a stably exceptional class $\ell$. 
We take $g\in \Mth(X)$ and consider the image of a cohomology class $\alpha$ under the monodromy operator $gR_\ell g^{-1}$. Set $\beta:=g^{-1}(\alpha)$.
Notice that as $g$ is an isometry we have
$$
q_X(g^{-1}(\alpha), \ell)= q_X(\alpha, g(\ell)). 
$$
Therefore we deduce the following:
$$
 \alpha \mapsto g(R_\ell(g^{-1}(\alpha)))=g(\beta -  2\frac{q_X(\beta,\ell)}{q_X(\ell)}\ell)=\alpha - 2\frac{q_X(\alpha,g(\ell))}{q_X(g(\ell))}g(\ell)= R_{g(\ell)}(\alpha).
$$
Since by Proposition \ref{prop:prime-is-se}, item (2), $g(\ell)$ or its opposite is a stably exceptional class, the conclusion now follows. 
\end{proof}

\begin{definition}\label{def:ex-chamber}
Let $X$ be a primitive symplectic variety and $q_X$ its BBF quadratic form. 
\begin{enumerate}
\item The normal subgroup $W_X\subset \Mth(X)$ from Lemma~\ref{lem:normality} will be referred to as the \emph{Weyl group} of $X$.
\item The {\it fundamental exceptional chamber} $\FE X$ is the subcone of classes $\alpha \in \Pos(X)$ such that $q_X(\alpha, [E])>0$ for every prime exceptional divisor $E\subset X$. 
\item The {\it fundamental uniruled chamber} $\FU X$ is the subcone of classes $\alpha \in \Pos(X)$ such that $q_X(\alpha, [D])>0$ for every uniruled  divisor $D\subset X$. 
\item The {\it stably exceptional chamber} $\SE X$ is the subcone of classes $\alpha \in \Pos(X)$ such that $q_X(\alpha, \ell)>0$ for every class $\ell\in \Pic(X)$ of a stably exceptional line bundle on $X$. 
\item An {\it  exceptional chamber} of the positive cone $\Pos(X)$ is a subset of the form $g \cdot \FE X$, for some $g\in \Mth(X)$.
\end{enumerate}
\end{definition}

Recall from \cite[Theorem~1.1]{KMPP19} that (in particular) every effective $\Q$-Weil divisor $D$ on a primitive symplectic variety has a Zariski decomposition $D=P(D)+N(D)$ where $P(D)$ is $q_X$-nef, i.e. it pairs nonnegatively with all effective Cartier divisors. %Note that such a product makes sense for Weil divisors,\margincom{C: why the comment on Weil divisors if we dont use it (or do we?) Gio: no, we don't. \\ GL: I suggest to cut the comment.} either on a $\Q$-factorialization or after restriction to the regular part.
We need the following property of the positive part.

\begin{proposition}\label{proposition kmpp}
Let $D$ be an effective $\Q$-Cartier divisor on a projective primitive symplectic variety $X$ and let $D=P(D)+N(D)$ be its Zariski decomposition in the sense of \cite{KMPP19}. If $D\in \Pos(X)$, then $P(D)$ is movable.
\end{proposition}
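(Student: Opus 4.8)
The plan is to derive the statement from the properties of the Boucksom--Zariski decomposition already recalled in (the proof of) Lemma~\ref{lem:mov-pos}, together with the bigness of $D$. First I would observe that, by Lemma~\ref{lemma pos in big}, the class $D$ lies in $\Big(X)$, so $D$ is big. Since $H^0(X,\sO_X(kP(D)))\cong H^0(X,\sO_X(kD))$ for every $k>0$ with $kP(D)$ and $kD$ integral (this is part of \cite[Theorem~1.1]{KMPP19}, cf.\ the proof of Lemma~\ref{lem:mov-pos}), the $\Q$-divisor $P(D)$ is big as well; in particular, once $k$ is sufficiently divisible, $kP(D)$ is linearly equivalent to an effective divisor.

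Next I would fix an integer $k>0$ such that $kD$, $kN(D)$ and $kP(D)$ are all integral. By \eqref{eq:star}, the fixed divisorial part of the linear system $|kD|$ equals $kN(D)$. On the other hand, multiplication by a section of $\sO_X(kN(D))$ vanishing exactly along $kN(D)$ realizes the isomorphism $H^0(X,\sO_X(kP(D)))\to H^0(X,\sO_X(kD))$, so every effective divisor in $|kD|$ is of the form $E+kN(D)$ with $E\in|kP(D)|$, and conversely. Writing $|kP(D)|$ as its fixed divisorial part $F$ plus a system with no common divisorial component, one gets that the fixed divisorial part of $|kD|$ is $F+kN(D)$; comparing with \eqref{eq:star} forces $F=0$, i.e.\ $\Bs|kP(D)|$ has codimension $\geq 2$. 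Hence $kP(D)$ is the class of an effective divisor whose linear system has no divisorial fixed part, so $kP(D)\in\Mov(X)$ and therefore $P(D)\in\Mov(X)$.

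I do not expect any serious obstacle here; the argument is essentially a repackaging of \eqref{eq:star}. The only points needing a little care are the bookkeeping of Cartier indices (so that all the $\Q$-divisors involved become integral simultaneously and the global-sections isomorphism of the Zariski decomposition applies) and the elementary additivity of fixed divisorial parts used above. The hypothesis $D\in\Pos(X)$ enters, through Lemma~\ref{lemma pos in big}, only to ensure that $P(D)$ is big, hence in particular non-zero and effective.
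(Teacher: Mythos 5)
There is a genuine gap, and it sits exactly at the step ``comparing with \eqref{eq:star} forces $F=0$''. What the isomorphism $H^0(X,\sO_X(kP(D)))\cong H^0(X,\sO_X(kD))$ actually gives --- and this is all that \eqref{eq:star} rests on in the proof of Lemma~\ref{lem:mov-pos} --- is that every member of $|kD|$ contains $kN(D)$, together with your own bookkeeping identity $\mathrm{Fix}|kD|=kN(D)+\mathrm{Fix}|kP(D)|$. The containment you need, namely that the divisorial base locus of $D$ is no larger than $N(D)$, is equivalent to $\mathrm{Fix}|kP(D)|=0$, i.e.\ to the movability of $P(D)$ --- precisely the statement to be proved. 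So the argument is circular: you read \eqref{eq:star} as a finite-level equality and then use it to establish that very equality. That no formal fixed-part bookkeeping can close this gap is shown by Zariski's classical example (blow up $\P^2$ in $12$ general points of a smooth cubic): there the nef and big divisor $D$ equals its own positive part, the section isomorphism holds trivially, and yet the strict transform of the cubic is a fixed component of $|mD|$ for every $m$. Movability of the positive part is therefore not a formal consequence of the decomposition plus the section isomorphism; it uses the $K$-trivial geometry. A further warning sign is that in your argument $D\in\Pos(X)$ only serves to make $P(D)$ nonzero and effective (effectivity is already part of \cite[Theorem~1.1]{KMPP19}), whereas the paper remarks right after the proposition that dropping the positivity assumption would require the hyperkähler SYZ conjecture.

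The paper's proof is correspondingly more substantial: after reducing to $D=P(D)$, one shows that a $D$-MMP involves no divisorial contractions --- if a $D$-negative curve $R$ covered a divisor $E$, then by Corollary~\ref{corollary curve deforms} $R$ deforms to a Picard rank one point of its Hodge locus, forcing $[R]=\lambda E^\vee$ with $\lambda>0$ and the contradiction $0>R\cdot D=\lambda q_X(E,D)\geq 0$ --- so by \cite[Corollary~1.4.2]{BCHM10} one reaches a model $\phi:X\ratl X'$, isomorphic to $X$ in codimension one, on which $D'=\phi_*D$ is nef. Bigness of $D'$ (Lemma~\ref{lemma pos in big}; this is where $D\in\Pos(X)$ genuinely enters) together with Kawamata's base-point-free theorem gives that $D'$ is semi-ample, hence movable, and movability descends to $D$ because $\phi$ is small. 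Some such use of bigness and base-point-freeness (or an equivalent input) is unavoidable here.
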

\begin{proof}
As by \cite[Theorem~1.1]{KMPP19} the divisor $P(D)$ is an effective $\Q$-Cartier with $q_X(P(D)) \geq q_X(D)$, we may assume $D=P(D)$. Notice that  only flips appear in any $K_X+D=D$ log-MMP on $X$. Indeed suppose by contradiction that  a $D$-negative curve $R$ covers a divisor $E$. By the cone theorem $R$ can be contracted. By Corollary \ref{corollary curve deforms} the curve $R$ deforms along its Hodge locus, at the general point $t$ of which the Picard number $\rho(X_t)$ is one. This means that the deformation $R_t$ of $R$ has class dual to a (positive) multiple of $E_t$, the deformation of $E$. In conclusion we have that $R=\lambda E^\vee, \lambda >0$.
Then $0>R\cdot D=\lambda q_X(E,D)\geq 0$, contradiction. 
By \cite[Corollary~1.4.2]{BCHM10}, any MMP with scaling for an appropriate ample divisor terminates and there is a birational model $\phi:X\ratl X'$ of $X$ such that $\phi$ is an isomorphism in codimension one and $D':=\phi_*D$ is nef on $X'$. Moreover, $D'$ is big by Lemma~\ref{lemma pos in big} so by Kawamata's base-point-free theorem \cite[Theorem~6.1]{Kaw85} we obtain that $D'$ is semi-ample. In particular, $D'$ is movable and, since $\phi$ is an isomorphism in codimension one, so is $D$.
\end{proof}

By passing to a $\Q$-factorialization \cite[Corollary~1.4.3]{BCHM10}, we could also make sense of the previous statement for $\Q$-Weil divisors. As we do not need this case in what follows, we have decided to restrict our attention to $\Q$-Cartier divisors. Note that if the hyperkähler SYZ-conjecture holds for primitive symplectic varieties, the positivity assumption in Proposition~\ref{proposition kmpp} is superfluous. See \cite[Section~21.4]{GHJ03} or \cite[Conjecture 4.1]{Saw03} for the statement and a discussion of the conjecture in the smooth case. With the same notation as above, we have:

\begin{corollary}\label{corollary kmpp}
Let $D$ be an effective $\Q$-Cartier divisor on $X$. Then $P(D)\in\Movbar(X)$.
\end{corollary}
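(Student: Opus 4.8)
The plan is to deduce this from Proposition~\ref{proposition kmpp} by perturbing with an ample class and passing to a limit. Write $P:=P(D)$; we may assume $P\neq 0$. By \cite[Theorem~1.1]{KMPP19} the class $P$ is an effective $\Q$-Cartier divisor which is $q_X$-nef, so pairing $P$ with its own prime components gives $q_X(P)\ge 0$, and pairing with an ample class $A$ (using the positivity of $q_X(\,\cdot\,,A)$ on nonzero effective classes, a fact used e.g.\ in the proof of Lemma~\ref{lem:mov-pos}) shows that $P$ lies in the closed positive cone. Hence for every rational $t>0$ the divisor $P+tA$ is effective $\Q$-Cartier with
\[
q_X(P+tA)=q_X(P)+2t\,q_X(P,A)+t^2q_X(A)>0,
\]
and on the same side of the light cone as $A$, so $P+tA\in\Pos(X)$. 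By Proposition~\ref{proposition kmpp} the positive part $P_t:=P(P+tA)$ is movable, i.e.\ $P_t\in\Mov(X)\subseteq\Movbar(X)$.

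It then remains to let $t\to 0^+$ and to check $P_t\to P$. Writing $N_t:=N(P+tA)=P+tA-P_t\ge 0$, one has $N_t\le P+A$ for $0<t\le 1$, so the classes $N_t$ stay in a fixed compact set (they are supported on the finitely many prime components of $P+A$ with bounded coefficients); along a subsequence $t_j\to 0$ we get $N_{t_j}\to N_0\ge 0$ and $P_{t_j}\to P_0:=P-N_0\in\Movbar(X)$, the latter cone being closed. To conclude I would show $N_0$ is a nonnegative multiple of $P_0$, so that $P=P_0+N_0\in\Movbar(X)$: the orthogonality $q_X(P_t,N_t)=0$ of the Zariski decomposition gives $q_X(P_0,N_0)=0$ in the limit, while $q_X(N_{t_j})\le 0$ forces $q_X(P_0)=q_X(P)-q_X(N_0)\ge 0$; if $q_X(P_0)>0$ then $q_X(P_0,N_0)=0$ forces $N_0=0$, and if $q_X(P_0)=q_X(N_0)=0$ then $P_0$ and $N_0$ are $q_X$-orthogonal boundary classes of the positive cone, hence proportional since $q_X$ is Lorentzian on $\Pic(X)_\R$.

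The main difficulty is exactly this last limiting step. Proposition~\ref{proposition kmpp} only applies in the \emph{open} positive cone, so it has to be used on the perturbations $P+tA$ and not on $P$ itself, and one must control the Boucksom--Zariski decomposition of $P+tA$ as $t\to 0$; such decompositions need not vary continuously, and a priori a divisorial negative part could survive in the limit. The only dangerous case is $q_X(P(D))=0$, with $P(D)$ on the boundary of the positive cone, and there the signature of the BBF form is what saves the day. (Alternatively, if one grants that the positive part of a Boucksom--Zariski decomposition has empty divisorial base locus --- which \eqref{eq:star} should yield --- then $P(D)\in\Mov(X)$ directly and no perturbation is needed.)
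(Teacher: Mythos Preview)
Your overall strategy---perturb $P(D)$ by a small ample class, apply Proposition~\ref{proposition kmpp}, and pass to the limit---is exactly the paper's approach. However, you missed the one observation that makes the limit immediate, and your substitute argument has a genuine gap.

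The key point is that $P:=P(D)$ is \emph{$q_X$-nef} by the very definition of the Boucksom--Zariski decomposition, and $A$ is ample, hence $P+tA$ is $q_X$-nef for every $t>0$. By uniqueness of the Zariski decomposition, this forces
\[
P(P+tA)=P+tA,\qquad N(P+tA)=0.
\]
So $P_t=P+tA$ and $N_t=0$ identically; there is nothing to control in the limit, and $P+tA\in\Mov(X)$ for all rational $t>0$ gives $P\in\Movbar(X)$ at once. This is what the paper's one-line proof does (implicitly replacing $D$ by $P(D)$ first).

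Your limiting argument, by contrast, does not go through as written. The assertion ``if $q_X(P_0)>0$ then $q_X(P_0,N_0)=0$ forces $N_0=0$'' is false without further input: on a Lorentzian space, the orthogonal complement of a positive vector is negative \emph{definite}, so $N_0\in P_0^\perp$ only gives $q_X(N_0)\le 0$, which you already had. To salvage it you would need $q_X(N_0)=0$, and for that you must use that $P=P(D)$ is $q_X$-nef and $N_0$ is effective, yielding $0\le q_X(P,N_0)=q_X(P_0,N_0)+q_X(N_0)=q_X(N_0)\le 0$. You never invoke this. Once you do, the observation in the previous paragraph makes the whole detour unnecessary.

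Finally, your parenthetical alternative is in fact the cleanest route: apply \eqref{eq:star} to the effective $\Q$-Cartier divisor $P(D)$ itself. Since $P(D)$ is $q_X$-nef, its own negative part vanishes, hence by \eqref{eq:star} its divisorial base locus is empty and $P(D)\in\Mov(X)$ outright, without any perturbation or appeal to Proposition~\ref{proposition kmpp}.
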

\begin{proof}
We have $P(D)\in \widebar{\Pos(X)}$, so applying Proposition~\ref{proposition kmpp} to $D+\frac{1}{n}A$ for some ample divisor $A$ we obtain the result as $n\to \infty$.
\end{proof}

\begin{proposition}\label{prop:fe-in-bdp}
Let $X$ be a projective primitive symplectic variety with $\Q$-factorial terminal singularities. 
 Then 
$$
 {\BAmp(X)}\subset \FE X \subset \FEbar X \subset \widebar{\BAmp(X)} = \Movbar(X).
$$
\end{proposition}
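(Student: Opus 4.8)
The plan is to establish the four inclusions/equality from left to right, using the Zariski decomposition machinery and the Hodge-theoretic Torelli theorem as the main inputs. The chain $\BAmp(X) \subset \FE X$ is the key geometric step: if $\alpha \in \BAmp(X)$, then $\alpha = f^*(\alpha')$ for some birational $f : X \ratl X'$ to another $\Q$-factorial terminal primitive symplectic variety with $\alpha'$ ample on $X'$; since $f$ is an isomorphism in codimension one, $f$ identifies prime divisors on $X$ with prime divisors on $X'$ and preserves the BBF form, so it suffices to check that an ample class $\alpha'$ on $X'$ satisfies $q_{X'}(\alpha', [E']) > 0$ for every prime exceptional divisor $E'$. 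This in turn follows from Remark~\ref{remark dual curve} (or directly from \cite[Theorem 1.2]{LMP21}): the class $[E']^\vee$ is a positive multiple of the class of the ruling curve $R'$ of $E'$, so $q_{X'}(\alpha', [E']) = \lambda\,\alpha' \cdot R' > 0$ since $\alpha'$ is ample. The inclusion $\FE X \subset \FEbar X$ is trivial (closure).

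The heart of the matter is the last equality $\widebar{\BAmp(X)} = \Movbar(X)$, together with $\FEbar X \subset \widebar{\BAmp(X)}$. For $\widebar{\BAmp(X)} \subset \Movbar(X)$: every class in $\BAmp(X)$ is, after pulling back along an isomorphism in codimension one, ample hence movable, and $\Movbar(X)$ is closed. For the reverse inclusion $\Movbar(X) \subset \widebar{\BAmp(X)}$, I would argue that it suffices (since both cones live inside $\Pos(X)$ by \eqref{eq cones inclusions} and Lemma~\ref{lem:mov-pos}, and $\Pos(X) \subset \Big(X) \subset \Eff(X)$) to show that a movable big class $D$, which we may take to be an effective $\Q$-Cartier divisor with $P(D) = D$, lies in $\widebar{\BAmp(X)}$. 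Running a $D$-MMP with scaling as in the proof of Proposition~\ref{proposition kmpp} — noting again that only flips occur because a $D$-negative curve covering a divisor $E$ would force $0 > R \cdot D = \lambda q_X(E,D) \geq 0$ — one reaches a birational model $\phi : X \ratl X'$, an isomorphism in codimension one, on which $D' = \phi_* D$ is nef and big, hence semiample by Kawamata's base-point-free theorem, hence (after perturbation) in $\BAmp(X')$ pulled back, so $D \in \widebar{\BAmp(X)}$.

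Finally, for $\FEbar X \subset \widebar{\BAmp(X)}$: given $\alpha$ in the interior of $\FE X$ (it is enough to treat interior rational classes, since $\FE X$ is open in $\Pos(X)$ and we take closures), $\alpha$ is big by Lemma~\ref{lemma pos in big}, so write its Zariski decomposition $\alpha = P(\alpha) + N(\alpha)$. By Corollary~\ref{corollary kmpp}, $P(\alpha) \in \Movbar(X) = \widebar{\BAmp(X)}$. The point is to show $N(\alpha) = 0$: the negative part $N(\alpha) = \sum a_i E_i$ is supported on prime divisors of negative-definite intersection, and one checks these must be prime exceptional (a component $E_i$ of the negative part has $q_X(E_i) < 0$ by negative-definiteness of the Zariski-negative part in the singular setting of \cite{KMPP19}, and is $\Q$-Cartier since $X$ is $\Q$-factorial); then $q_X(\alpha, E_i) = q_X(P(\alpha), E_i) + q_X(N(\alpha), E_i)$, where $q_X(P(\alpha),E_i) \geq 0$ because $P(\alpha)$ is $q_X$-nef, while $q_X(N(\alpha), E_i) \leq 0$ for each $i$ would not immediately contradict $q_X(\alpha, E_i) > 0$ unless one pairs more carefully — so instead pair $\alpha$ with $N(\alpha)$ itself: $q_X(\alpha, N(\alpha)) = q_X(P(\alpha), N(\alpha)) + q_X(N(\alpha)) \leq q_X(N(\alpha)) \leq 0$ (using $P(\alpha)$ $q_X$-nef and $N(\alpha)$ effective, plus negative semi-definiteness), whereas $q_X(\alpha, N(\alpha)) = \sum a_i q_X(\alpha, E_i) > 0$ if $N(\alpha) \neq 0$, since each $E_i$ is prime exceptional and $\alpha \in \FE X$. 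This forces $N(\alpha) = 0$, so $\alpha = P(\alpha) \in \widebar{\BAmp(X)}$, completing the chain.

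\textbf{Main obstacle.} I expect the delicate point to be the identification $\widebar{\BAmp(X)} = \Movbar(X)$, specifically the termination and semiampleness step in the MMP argument for $\Movbar(X) \subset \widebar{\BAmp(X)}$, which requires that the minimal model program with scaling behaves well on projective primitive symplectic varieties with $\Q$-factorial terminal singularities (invoking \cite[Corollary 1.4.2]{BCHM10} and Kawamata's base-point-free theorem), and the verification via Corollary~\ref{corollary curve deforms} that no divisorial contractions occur along the way. The rest of the argument is largely formal manipulation of cones and Zariski decompositions, following Markman's treatment in the smooth case \cite[Section~5]{Mar11}.
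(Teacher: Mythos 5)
Your proposal follows essentially the same route as the paper: $\BAmp(X)\subset\FE X$ by transporting to a model where the class is ample, $\Movbar(X)=\widebar{\BAmp(X)}$ via the MMP argument of Proposition~\ref{proposition kmpp} (only flips, then base-point-freeness), and $\FEbar X\subset\Movbar(X)$ by reducing to rational classes and killing the negative part of the Boucksom--Zariski decomposition through a pairing argument, exactly as in \cite[Section~6]{Mar11} adapted via \cite{KMPP19}. Two small points need repair, though neither affects the architecture. First, your key inequality $q_X(P(\alpha),N(\alpha))\leq 0$ is justified by ``$P(\alpha)$ is $q_X$-nef and $N(\alpha)$ is effective'', which gives the \emph{opposite} sign ($\geq 0$); what you actually need, and what the paper uses, is that the decomposition of \cite[Theorem~1.1]{KMPP19} is $q_X$-orthogonal, so $q_X(P(\alpha),N(\alpha))=0$ and hence $q_X(\alpha,N(\alpha))=q_X(N(\alpha))\leq 0$, which then contradicts $q_X(\alpha,N(\alpha))>0$ when $N(\alpha)\neq 0$. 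Second, your reduction to rational classes invokes openness of $\FE X$, which is not available at this stage (it is cut out by possibly infinitely many strict inequalities; openness only comes out of the later Vinberg--Shvartsman analysis, Theorem~\ref{thm:poly}, whose proof depends on this proposition). The paper instead perturbs an element of $\FE X$ by arbitrarily small ample classes, which stays in $\FE X$ by the first inclusion and convexity and produces the needed dense set of rational classes; alternatively, note that $\FE X$ is convex with nonempty interior (it contains $\Amp(X)$), so rational classes in its interior are dense in $\FEbar X$. With these two adjustments your argument is complete and coincides with the paper's.
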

\begin{proof}
For the first inclusion consider $\alpha\in {\BAmp(X)}$ and a prime exceptional divisor $E$ on $X$. Then there is a birational $f:X \ratl X'$ to a projective primitive symplectic variety $X'$ with $\Q$-factorial terminal singularities such that $f_*\alpha$ is in the ample cone of $X'$. Then $f_* E$ is still effective on $X'$ and we are reduced to showing that an effective divisor pairs positively with an ample divisor. This immediately follows from the usual formulas for the BBF form, see e.g. \cite[Exercise~23.2]{GHJ03} which is derived the same way as in the smooth case. Another way of proving this is to use the Boucksom-Zariski decomposition in the singular case \cite[Theorem 1.1]{KMPP19}, from which one deduces that the intersection cannot be $<0$. But it cannot be zero either by the Hodge index theorem and the conclusion follows. 
Let us prove the equality in the above chain. Clearly, a divisor which is ample on a birational model is movable. It suffices to show that an element in the interior of $\Mov(X)$ is nef on a birational model of $X$. Note that such an element is $q_X$-positive by \eqref{eq cones inclusions}. The nefness on a birational model is seen as in the proof of Proposition~\ref{proposition kmpp}.

An element of $\FEbar X$ is a limit of rational vectors in $\FE X$. Indeed, an element of $\FE X$ can be made rational by adding arbitrarily small classes in $\Amp(X)$ which by the first inclusion and convexity does not take us out of $\FE X$. It thus suffices to show that each rational vector in $\FE X$ already lies in $\Movbar(X)$. Let $D$ be such a class. By definition, $\FE X \subset \Pos(X)$, hence $D$ is effective by Lemma~\ref{lemma pos in big} and \eqref{eq big in eff}. We may therefore apply \cite[Theorem~1.1]{KMPP19} to write $D=P(D)+N(D)$ where $P(D)$ is $q_X$-nef, $N(D)$ is $q_X$-exceptional, and the decomposition is $q_X$-orthogonal. In particular, $N(D)$ is a sum of prime exceptional divisors, so from the assumption $D\in \FE X$ we deduce
\[
0 \leq q_X(D,N(D))= q_X(N(D)).
\]
But $q_X(N(D)) \leq 0$ with equality if and only if $N(D)=0$, cf. \cite[Theorem~1.1]{KMPP19}. Hence $D=P(D)$ and it is movable by Proposition~\ref{proposition kmpp}. The desired inclusion follows.
\end{proof}

%-------------------------------------------

\begin{proposition}\label{prop:exc=sexc}
Let $X$ be a projective primitive symplectic variety with terminal and $\Q$-factorial singularities. Then $\FE X =\SE X$.
\end{proposition}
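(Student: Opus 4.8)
\textbf{Proof proposal for Proposition~\ref{prop:exc=sexc}.}

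The plan is to prove the two inclusions $\FE X \subset \SE X$ and $\SE X \subset \FE X$ separately, the first being the substantial one. For $\SE X \subset \FE X$, I would simply note that every prime exceptional divisor $E$ determines a stably exceptional line bundle $\sO_X(E)$ by Proposition~\ref{prop:prime-is-se}, item (1); hence the defining conditions for $\SE X$ form a (possibly larger) set of linear inequalities than those for $\FE X$, and so $\SE X \subset \FE X$ is immediate.

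The reverse inclusion $\FE X \subset \SE X$ is the heart of the matter. Let $\alpha \in \FE X$ and let $\ell \in \Pic(X)$ be the class of a stably exceptional line bundle $L$; I must show $q_X(\alpha,\ell)>0$. By Definition~\ref{def:se}, for a very general $t$ in $\Def(X,L)$ the line bundle $L_t$ is $\sO_{X_t}(aE_t)$ for a prime exceptional divisor $E_t$ and the Cartier index $a>0$; in particular $\ell$ is, up to the positive scalar $a$ and up to parallel transport in $\Def(X,L)$ (which is a locally trivial Hodge isometry fixing $\ell$ and preserving $q_X$), the class of a genuine prime exceptional divisor on a deformation of $X$. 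The difficulty is that $\alpha$ need not remain a $(1,1)$-class on $X_t$, so I cannot directly transport the inequality $q_X(\alpha,[E])>0$ from the generic fiber. Instead the strategy is to work on $X$ itself: by Proposition~\ref{prop:fe-in-bdp} we have $\FE X \subset \FEbar X \subset \Movbar(X)$, and moreover $\BAmp(X) \subset \FE X$ with $\FE X$ open-ish in $\Pos(X)$, so after perturbing by a small ample class I may assume $\alpha$ is a rational class in $\FE X \cap \Movint(X)$, in fact that $\alpha$ lies in the interior of $\Mov(X)$. Now I invoke Lemma~\ref{lem:mov-pos}: a class in the interior of the movable cone pairs strictly positively with \emph{every} prime divisor $D'$ on $X$.

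So the remaining point is to reduce the pairing $q_X(\alpha,\ell)$ to a pairing of $\alpha$ with effective (prime) divisors on $X$. The class $\ell$ of a stably exceptional line bundle is effective by semicontinuity of $h^0$ (as recalled after Definition~\ref{def:se}), so $\ell$ is represented by an effective $\Q$-Cartier divisor $\Delta = \sum n_i D_i'$ with $n_i > 0$ and $D_i'$ prime. Then $q_X(\alpha,\ell) = \sum n_i q_X(\alpha, [D_i'])$, and each term is strictly positive by Lemma~\ref{lem:mov-pos}, hence $q_X(\alpha,\ell)>0$ for $\alpha$ in the interior of $\Mov(X) \cap \FE X$. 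Finally I pass to the general $\alpha \in \FE X$ by a density/limit argument exactly as in the proof of Proposition~\ref{prop:fe-in-bdp}: every class in $\FE X$ is a limit of rational classes in $\FE X$, each of which can be pushed into $\Movint(X)$ by adding an arbitrarily small ample class without leaving $\FE X$ (using $\BAmp(X) \subset \FE X$ and convexity), so the inequality $q_X(\alpha,\ell) \geq 0$ holds on all of $\FE X$; and since $\alpha \in \Pos(X)$ while $\ell$ has negative square (being stably exceptional), the Hodge index theorem forbids $q_X(\alpha,\ell)=0$, giving strict positivity. The main obstacle I anticipate is handling the boundary case cleanly — making sure the perturbation argument keeps $\alpha$ inside $\FE X$ and that the non-vanishing at the end is correctly justified via Hodge index — but the structural input (effectivity of $\ell$, Lemma~\ref{lem:mov-pos}, and Proposition~\ref{prop:fe-in-bdp}) should make this routine.
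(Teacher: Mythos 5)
Your inclusion $\SE X \subset \FE X$ is exactly the paper's (via Proposition~\ref{prop:prime-is-se}, item (1)), and your perturbation step is fine as far as it goes: for $\alpha\in\FE X$, an ample class $A$ and small $\eps>0$, the class $\alpha+\eps A$ lies in $\Movint(X)$ (using $\FE X\subset\Movbar(X)$ from Proposition~\ref{prop:fe-in-bdp}), so Lemma~\ref{lem:mov-pos} applied to the prime components of an effective representative of $\ell$ gives $q_X(\alpha+\eps A,\ell)>0$. The genuine gap is the last step. Letting $\eps\to 0$ only yields $q_X(\alpha,\ell)\geq 0$, and your appeal to the Hodge index theorem to rule out equality is false: on $\Pic(X)_\R$ the BBF form has signature $(1,\rho(X)-1)$, so the orthogonal complement of a class $\alpha\in\Pos(X)$ is negative definite, and a negative class $\ell$ can perfectly well be orthogonal to a positive class. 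Indeed the walls $\ell^\perp$ of stably exceptional classes do meet $\Pos(X)$ — this is precisely why the set $\mathcal U$ and the chamber decomposition before Theorem~\ref{thm:poly} are nontrivial. So your argument does not exclude that $\alpha\in\FE X$ lies on the wall $\ell^\perp$, which is exactly what the proposition has to rule out.

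The missing idea is to extract from $\ell$ a nonzero piece consisting of prime exceptional divisors and pair $\alpha$ with it directly, so that the strict inequality comes from the definition of $\FE X$ rather than from a limit. The paper does this with the Boucksom--Zariski decomposition of \cite[Theorem~1.1]{KMPP19}: writing $\ell=P(\ell)+N(\ell)$, one has $N(\ell)\neq 0$ because $q_X(\ell)<0$ while $q_X(P(\ell))\geq 0$; since $N(\ell)$ is a sum of prime exceptional divisors, $q_X(\alpha,N(\ell))>0$ holds by definition of $\FE X$, while $q_X(\alpha,P(\ell))\geq 0$ because both classes lie in $\overline{\Pos(X)}$, whence $q_X(\alpha,\ell)>0$. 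Alternatively, you could salvage your own route without any perturbation or limit: in the effective decomposition $\ell=\sum_i n_i[D_i']$, every component with $q_X(D_i')\geq 0$ lies in $\overline{\Pos(X)}$ (it pairs positively with an ample class), hence pairs $\geq 0$ with $\alpha$; and at least one component must satisfy $q_X(D_i')<0$, i.e.\ be prime exceptional (otherwise $\ell\in\overline{\Pos(X)}$, contradicting $q_X(\ell)<0$), and that component pairs $>0$ with $\alpha$. Either way, the strictness must come from a prime exceptional summand of $\ell$, not from a Hodge-index argument.
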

\begin{proof}
 First of all by Proposition \ref{prop:prime-is-se}, item (1), we have 
$$
 \SE X\subset \FE X. 
$$
Conversely, let $\alpha\in \FE X$. For any stably exceptional class $\ell$, we consider its Boucksom-Zariski decomposition (whose existence in the singular case is ensured by \cite[Theorem 1.1]{KMPP19}):
$$
\ell = P(\ell) + N(\ell). 
$$
Notice that the negative part $N(\ell)$ cannot be zero since  $q_X(\ell)<0$ while $q_X(P(\ell))\geq 0$. Moreover, $N(\ell)$ is the sum of prime exceptional divisors so that $q_X(\alpha,N(\ell)) >0$ by the hypothesis that $\alpha\in \FE X$. We also have $q_X(\alpha,P(\ell))\geq 0$ as both classes belong to the closure of $\Pos(X)$. Putting both inequalities together, we conclude $q_X(\alpha,\ell)\geq q_X(\alpha,N(\ell)) >0$ and hence $\alpha\in\SE X$.
\end{proof}

\begin{example}\label{example stably exceptional degenerates}
Note that a stably exceptional class does not always specialize to a sum of prime exceptional divisors (if this were the case, the equality $\FE E=\SE X$ would be true for trivial reasons). As a counterexample we may take an elliptic K3 surface with a section and an $I_2$-fiber (i.e. two smooth rational curves meeting in two distinct points). Let $s$ be the class of the section, $f$ the class of the general fiber, and $e$ the class of the rational curve in the $I_2$-fiber that meets the section. We have
\[
e^2=s^2=-2, \quad f^2=0=e.f, \quad f.s=1=e.s.
\]
We deduce that $\alpha:=s+2e+2f$ is a primitive class of square $-2$. Generically on its Hodge locus, it is effective and irreducible. Thus, $\alpha$ is stably exceptional. If   $E_t$ are the $(-2)$-curves on general nearby fibers in the Hodge locus of $\alpha$, then $E_t$ specializes to an element in the linear system corresponding to $\alpha$. If $L_\alpha$ denotes the corresponding line bundle, one verifies that $H^0(X,L_\alpha)=H^0(X,L_{2f})$, so every specialization of $E_t$ will have a summand with square zero. By moving to a nearby point in the common Hodge locus of $s+2f$ and $e$, one obtains an example where the specialization always contains a square positive summand. In both cases, we see that a stably exceptional bundle may have a higher-dimensional space of sections at special points of its Hodge locus.
\end{example}

%\color{red} RECALL\margincom{C: what exactly? Do we have to?} EXPLICITLY THE RESULTS BY VINBERG AND SHVARTSMAN \color{black}
Let us put
\begin{equation}
\mathcal U:= \Pos(X) \setminus \bigcup \{ \ell^\perp : \ell\ {\rm stably\ exceptional\ class}\}.
\end{equation}
Consider now a real vector space endowed with an inner product as follows 
\[
V:=\left(\R^{r+1}, (\cdot,\cdot)\right),  \quad \textrm{ where } \quad  (x,x):=x_0^2-\sum_{i>0}x_i^2
\]
and 
\[
\mathbb H^n:=\{x\in V  \mid  x^2=1,\ x_0>0\}.
\]
The key to study the monodromy group are the results in hyperbolic geometry due to Vinberg and Shvartsman \cite{vinberg} on 
discrete groups of motions (i.e. subgroups of $\O^+(V)$ with finite stabilizers and discrete orbits) of $\mathbb H^n$ generated by reflections. They prove the existence of a wall and chamber decompositions of $\mathbb H^n$ associated to reflections such that each chamber is a generalized convex polyhedron\footnote{Here, a \emph{generalized} convex polyhedron is the intersection of a countable number of half spaces.} which is a fundamental domain for a hyperbolic reflection group. These results are naturally applied to the case $V:=H^{1,1}(X,\Z)\otimes \R$ once we have the relevant results Theorem \ref{thm:hodge-torelli}, Lemma~\ref{lem:normality}, Propositions~\ref{prop:fe-in-bdp} and~\ref{prop:exc=sexc}. 
We refer the reader to \cite[Section 6.3]{Mar11} for further details.  
\begin{theorem}\label{thm:poly}
The fundamental exceptional chamber $\FE X$ is equal to the connected component of $\mathcal U$ containing the ample cone. 
In particular, $\FE X$ is the interior of a generalized convex polyhedron. 
\end{theorem}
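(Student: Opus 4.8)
The plan is to follow Markman's strategy from \cite[Section~6.3]{Mar11} closely, using the singular analogues established above. The starting point is the identification $\FE X = \SE X$ from Proposition~\ref{prop:exc=sexc}: this lets us describe the fundamental exceptional chamber purely in terms of the hyperplanes $\ell^\perp$ for $\ell$ a stably exceptional class, which are precisely the reflection hyperplanes of the Weyl group $W_X$ (Lemma~\ref{lem:normality}, together with Theorem~\ref{theorem reflection} and the parallel-transport remark preceding Lemma~\ref{lem:normality} which guarantees $R_\ell$ is a monodromy Hodge isometry for $\ell$ stably exceptional). First I would record that the set of such hyperplanes, viewed inside the positive cone $\Pos(X) \subset V := H^{1,1}(X,\Z)\otimes\R$, is locally finite: indeed the stably exceptional classes have bounded negative square (their negative parts in the Zariski decomposition are sums of prime exceptional divisors, and $q_X$ is negative definite on the span of the negative part), hence their duals form a discrete set of vectors in the negative cone, and $W_X$ acts on $\mathbb H^n$ as a discrete reflection group in the sense of Vinberg--Shvartsman \cite{vinberg}.

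With local finiteness in hand, the Vinberg--Shvartsman theory gives a wall-and-chamber decomposition of $\mathbb H^n$ (equivalently, of $\Pos(X)$ up to scaling) into generalized convex polyhedra, each of which is a fundamental domain for $W_X$, and the chambers are exactly the connected components of $\mathcal U = \Pos(X) \setminus \bigcup\{\ell^\perp : \ell \text{ stably exceptional}\}$. So the second assertion — that $\FE X$ is the interior of a generalized convex polyhedron — follows once we prove the first assertion, namely that $\FE X$ is the component of $\mathcal U$ containing the ample cone. For that, one inclusion is immediate: by definition $\FE X$ consists of $\alpha\in\Pos(X)$ with $q_X(\alpha,[E])>0$ for every prime exceptional $E$, and by $\FE X = \SE X$ this is the same as $q_X(\alpha,\ell)>0$ for every stably exceptional $\ell$; in particular $\FE X \subset \mathcal U$, it is convex hence connected, and it contains $\Amp(X)$ by Proposition~\ref{prop:fe-in-bdp}, so $\FE X$ is contained in the component of $\mathcal U$ containing the ample cone.

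The reverse inclusion is the main obstacle: I need to show that the component of $\mathcal U$ containing $\Amp(X)$ is not strictly larger than $\FE X$, i.e.\ that if $\alpha$ lies in this component then in fact $q_X(\alpha,\ell)>0$ (and not $<0$) for every stably exceptional $\ell$. The point is that each $\SE X = \FE X$ is cut out by the system of inequalities $q_X(\cdot,\ell) > 0$, one for each stably exceptional $\ell$, where the sign is fixed by the normalization that $\ell$ (rather than $-\ell$) is effective; since $\Amp(X) \subset \SE X$ all these inequalities hold with the $>0$ sign on the ample cone, and a connected subset of $\mathcal U$ containing a point of $\SE X$ cannot cross any wall $\ell^\perp$, so every inequality retains its sign throughout the component — giving that the whole component lies in $\SE X = \FE X$. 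The only subtlety is to ensure that the collection of walls $\{\ell^\perp\}$ genuinely separates distinct chambers, which is exactly the local finiteness established in the first step; this is where invoking \cite[Section~6.3]{Mar11} together with the singular inputs (Theorem~\ref{thm:hodge-torelli}, Lemma~\ref{lem:normality}, Propositions~\ref{prop:fe-in-bdp} and~\ref{prop:exc=sexc}) does the work, and I would present the argument as a direct transcription of Markman's with those substitutions flagged.
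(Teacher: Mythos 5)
Your overall route is the same as the paper's: identify $\FE X$ with $\SE X$ (Proposition~\ref{prop:exc=sexc}), view the hyperplanes $\ell^\perp$ as the mirrors of the reflections generating $W_X$, invoke the Vinberg--Shvartsman chamber decomposition through \cite[Section~6.3]{Mar11}, and conclude using convexity together with $\Amp(X)\subset\FE X$ from Proposition~\ref{prop:fe-in-bdp}. Your sign-constancy argument for the reverse inclusion is correct and in fact makes explicit what the paper compresses into the sentence that $\FE X$, being convex and contained in $\mathcal U$, ``must be one of the connected components of $\mathcal U$.''

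The one genuine problem is your justification of local finiteness. You assert that stably exceptional classes have bounded negative square and deduce this from the fact that their negative Zariski parts lie in a negative definite sublattice; that inference is invalid, since a negative definite sublattice contains vectors of arbitrarily negative square, so nothing there bounds $q_X(\ell)$. Worse, the boundedness statement itself is not available at this stage: the only such bound in the paper is Proposition~\ref{prop:bounded_wall}, which needs $b_2(X)\geq 5$ (an assumption Theorem~\ref{thm:poly} does not carry) and whose proof passes through Lemma~\ref{lem:wall_are_extremal} and Theorem~\ref{thm:semidirect}, hence through Theorem~\ref{thm:poly} itself, so appealing to it here would be circular. Fortunately the step is unnecessary: $W_X$ preserves the lattice of integral $(1,1)$-classes, being a subgroup of $\Mth(X)\subset \O(H^2(X,\Z),q_X)$, so it is a discrete group of motions of the hyperbolic space attached to $\Pos(X)$ in the sense of \cite{vinberg} (finite stabilizers, discrete orbits); for such a reflection group the local finiteness of the mirror arrangement and the identification of $\mathcal U$ with the union of the interiors of the fundamental chambers are part of the Vinberg--Shvartsman conclusion, packaged in \cite[Theorem~6.15]{Mar11}, which is exactly what the paper cites. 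Replacing your bounded-square argument by this discreteness observation repairs the proof, and the remainder goes through as you wrote it.
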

\begin{proof}
Thanks to results due to Vinberg and Shvartsman, see \cite[Theorem 6.15]{Mar11}, the (open) set $\mathcal U$ coincides with the union of the interiors of the fundamental chambers of the action of the Weyl group $W_X$ on $\Pos(X)$. 
The fundamental exceptional chamber $\FE X$ is by definition a convex cone and by Proposition \ref {prop:exc=sexc} it is contained in $\mathcal U$. Hence 
$\FE X$ must be one of the connected components of $\mathcal U$. Since by definition $\FE X$ contains the ample cone, the conclusion follows. 
\end{proof}
We are now ready to prove the semi-direct product decomposition of $\Mth(X)$ along the lines of \cite[Theorem 6.18]{Mar11}. We need the hypothesis $b_2(X)\geq 5$ to use Theorem \ref{thm:hodge-torelli}, which is fundamental for item (4).

\begin{theorem}\label{thm:semidirect}\
Let  $X$ be a projective primitive symplectic variety with $\Q$-factorial and terminal singularities and $b_2(X)\geq 5$. 
\begin{enumerate}
\item The group $\Mth(X)$ acts transitively of the set of exceptional chambers and the normal subgroup $W_X$ acts simply transitively on this set.
\item The exceptional chambers coincide with the connected components of $\mathcal U$, i.e. each exceptional chamber is the interior of a fundamental domain for the $W_X$-action on $\Pos(X)$.
\item The normal subgroup $W_X$ is generated by reflections $R_e$ with respect to classes $e$ of prime exceptional divisors.  
\item The subgroup of $\Mth(X)$ stabilizing the fundamental exceptional chamber $\FE X$ is equal to $\Mtb$.
\item The group $\Mth(X)$ is equal to the semidirect product of the normal subgroup $W_X$ and of $\Mtb$.
\end{enumerate}
\end{theorem}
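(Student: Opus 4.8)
The plan is to run Markman's argument \cite[Theorem~6.18]{Mar11} within the hyperbolic reflection‑group framework already assembled. I would begin by recording the facts furnished by Theorem~\ref{thm:poly} and the Vinberg--Shvartsman theory (cf.\ \cite[Section~6.3]{Mar11}): $W_X$ acts on $\Pos(X)$ as a discrete reflection group whose mirrors are the hyperplanes $\ell^\perp$ for $\ell$ a stably exceptional class; the connected components of $\mathcal U$ are exactly the interiors of the fundamental chambers of this action; $W_X$ acts simply transitively on the set of these chambers; and $\FE X$ is the chamber containing $\Amp(X)$. To obtain (1) and (2) I would first observe that each $g\in\Mth(X)$ preserves $\Pos(X)$ (monodromy operators preserve the orientation of a maximal positive-definite subspace) and, since $W_X$ is normal in $\Mth(X)$ by Lemma~\ref{lem:normality}, that conjugation by $g$ carries a reflection of $W_X$ to a reflection of $W_X$ with mirror the $g$-image of the original one; hence $g$ permutes the mirrors of $W_X$, preserves $\mathcal U$, and permutes its connected components. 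Consequently every exceptional chamber $g\cdot\FE X$ is a component of $\mathcal U$, and conversely every component is $w\cdot\FE X$ for some $w\in W_X\subseteq\Mth(X)$, hence is exceptional; this proves (2) and the transitivity of the $\Mth(X)$-action, while the simple transitivity of $W_X$ is the quoted property of reflection groups, giving (1).

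For (3) I would use that a reflection group is generated by the reflections in the walls of any one of its fundamental chambers, so $W_X$ is generated by the wall reflections of $\FE X$; and since $\FE X=\{\alpha\in\Pos(X)\mid q_X(\alpha,[E])>0\text{ for every prime exceptional }E\subset X\}$, each wall of $\FE X$ lies on some hyperplane $[E]^\perp$ with $E$ prime exceptional (a general point of a wall lies on exactly one of the defining hyperplanes), with wall reflection $R_{[E]}$. This gives $W_X=\langle R_{[E]}\mid E\subset X\text{ prime exceptional}\rangle$, each $R_{[E]}$ being a monodromy Hodge isometry by Theorem~\ref{theorem reflection}.

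For (4), the inclusion $\Mtb(X)\subseteq\Stab_{\Mth(X)}(\FE X)$ is easy: for a birational self-map $\phi:X\ratl X$, $\phi_*$ permutes the cones $f^*\Amp(X')$ and so preserves $\BAmp(X)$, hence the component $\FE X\supseteq\BAmp(X)\supseteq\Amp(X)$ of $\mathcal U$. For the reverse inclusion I would take $g\in\Mth(X)$ with $g\cdot\FE X=\FE X$; then $g(\Amp(X))$ is a non-empty open subcone of $\FE X$, and since $\overline{\BAmp(X)}=\FEbar X$ by Proposition~\ref{prop:fe-in-bdp} the cone $\BAmp(X)$ is dense in $\FE X$, so $g(\Amp(X))\cap\BAmp(X)\neq\emptyset$, hence $g(\BAmp(X))\cap\BAmp(X)\neq\emptyset$. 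The Hodge-theoretic global Torelli theorem~\ref{thm:hodge-torelli} applied with $X'=X$ (where $b_2(X)\geq 5$ enters) then yields a birational self-map $f:X\ratl X$ with $f_*=g$, i.e.\ $g\in\Mtb(X)$; so $\Stab_{\Mth(X)}(\FE X)=\Mtb(X)$.

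Item (5) should then be formal: $W_X\trianglelefteq\Mth(X)$ by Lemma~\ref{lem:normality}; if $g\in W_X\cap\Mtb(X)$ then $g$ stabilizes $\FE X$ by (4) while lying in $W_X$, so $g=\id$ by the simple transitivity in (1); and for any $g\in\Mth(X)$, transitivity in (1) gives $w\in W_X$ with $w(g\cdot\FE X)=\FE X$, so $wg\in\Stab_{\Mth(X)}(\FE X)=\Mtb(X)$ by (4), whence $g\in W_X\cdot\Mtb(X)$; thus $\Mth(X)=W_X\rtimes\Mtb(X)$. The step I expect to be the main obstacle is the reverse inclusion in (4): one must show that a $g$ preserving $\FE X$ moves $\BAmp(X)$ so as to meet $\BAmp(X)$ again — not merely so as to land in $\FEbar X$ — in order to invoke Theorem~\ref{thm:hodge-torelli}; the density $\overline{\BAmp(X)}=\FEbar X$ from Proposition~\ref{prop:fe-in-bdp} is exactly what makes this possible, and this is where the hypothesis $b_2(X)\geq 5$ is essential.
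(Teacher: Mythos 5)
Your proposal is correct and follows essentially the same route as the paper: the Vinberg--Shvartsman reflection-group framework via Theorem~\ref{thm:poly} and Lemma~\ref{lem:normality} for items (1)--(3), the Hodge-theoretic Torelli theorem for (4), and formal group theory for (5). The only (minor) deviation is in the reverse inclusion of (4), where you apply Theorem~\ref{thm:hodge-torelli} directly with $X'=X$ after checking $g(\BAmp(X))\cap\BAmp(X)\neq\emptyset$ via the density of $\BAmp(X)$ in $\FE X$, whereas the paper writes a very general class of $\FEbar X$ and its $g$-image as pullbacks of ample classes on two birational models $X_1',X_2'$ and uses Torelli to produce an isomorphism $h:X_1'\cong X_2'$ with $g=((f_2)^{-1}\circ h\circ f_1)_*$; both variants rest on exactly the same ingredients and are equally valid.
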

\begin{proof}
(1): The transitivity of the $\Mth$-action on the set of the exceptional chambers follows from the very definition of these chambers. The $W_X$-action on the set of the exceptional chambers
is both transitive and free by Vinberg and Shvartsman (see \cite[Theorem 6.15]{Mar11}). 

(2): As observed by Markman in  \cite[Proof of Theorem 6.17]{Mar11}, the (open) set $\mathcal U$ coincides with the union of the interiors of the fundamental chambers of the $W_X$-action on $\Pos(X)$. 
The conclusion follows. 

(3): The fundamental exceptional chamber $\FE X$ is the interior of a chamber of $W_X$ by Theorem \ref{thm:poly}. The walls of the boundary of  $\FE X$ are by definition of the form $[E]^\perp\cap \Pos(X)$, for some prime exceptional divisor $E$ on $X$. Then, the conclusion follows from \cite[Theorem 6.15]{Mar11}, item (2). 

%By Theorem \ref{thm:poly} one of the connected components of $\mathcal U$ is $\FE X$. The group $\Mth$ acts on the set of connected components of $\mathcal U$, as the set of $\pm \ell$, where $\ell$ is the class of a stably exceptional line bundle is $\Mth$-invariant by Proposition \ref{prop:prime-is-se}, item (2). Via the  

(4): Let $f_*\in \Mtb(X)$.  Then $f_* \FE X\subset \FE X$ since $f_*$ induces a bijection between the sets of exceptional divisors. 

For  the opposite implication we argue as follows. Let $g\in \Mth(X)$ be an operator preserving the fundamental exceptional chamber $\FE X$. Let $\alpha_1\in {\FEbar X}$ a very general class. 
By Proposition \ref{prop:fe-in-bdp}, both $\alpha_1$ and the class $\alpha_2:=g(\alpha_1)$ lie in $\widebar{\BAmp(X)}$. 
Therefore, for $i=1,2$, there exists a bimeromorphic map $f_i:X\dashrightarrow X'_i$ and a class $\beta_i\in \Amp(X'_i)$
such that $g(\alpha_i)=f^*_i(\beta_i)$. Then 
$$
(f_2^{-1})^*\circ g\circ f_1^*: H^2(X'_1,\Z) \to H^2(X'_2,\Z)
$$
is a Hodge isometric parallel transport operator by Theorem~\ref{theorem birational monodromy} sending $\beta_1$ to $\beta_2$. Hence, by Theorem~\ref{thm:hodge-torelli}  there exists an isomorphism 
$h:X'_1\cong X'_2$ such that
$$
h_*= (f_2^{-1})^*\circ g\circ f_1^*.
$$
From this we deduce that 
$$
 g= ((f_2)^{-1}\circ h\circ f_1 )_*\in \Mtb(X).
$$
(5): By items (1) and (4) above $\Mth(X)$ is generated by $W_X$ and $\Mtb(X)$. The normality of $W_X$ has been checked in Lemma \ref{lem:normality}. Finally the intersection 
$\Mtb(X)\cap W_X$ is trivial since by item (1) the action of $W_X$ is free. 
\end{proof}

Using the global Torelli theorem proved in \cite{BL18} as well as the non-separability of bimeromorphic IHS proved therein it is possible to generalize
Theorem \ref{thm:semidirect}, item (5). 

\begin{theorem}\label{thm:semidirect-gen}
Let $X_1$ and $X_2$ be projective primitive symplectic varieties with $\Q$-factorial terminal singularities and assume that $b_2(X_1)\geq 5$.
Let $f:H^2(X_1,\mathbb Z)\to H^2(X_2,\mathbb Z)$ be a Hodge isometry. Then there exists a unique $w\in W_{X_2}$ and a birational map 
$h:X_1\ratl X_2$ such that $f=w\circ h_*$. Moreover the map $h$ is uniquely determined, up to composition with an automorphism of $X_1$ acting trivially on $H^2(X_1,\mathbb Z)$.
\end{theorem}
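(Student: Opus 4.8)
The plan is to deduce the result from the Hodge-theoretic global Torelli theorem (Theorem~\ref{thm:hodge-torelli}), after pre-composing $f$ with a well-chosen element of the Weyl group $W_{X_2}$, and then to extract uniqueness formally from the semidirect-product structure (Theorem~\ref{thm:semidirect}). As in Theorem~\ref{thm:hodge-torelli}, $f$ is taken to be a locally trivial parallel transport operator which is a Hodge isometry --- this is in any case automatic once the conclusion holds, since both $w\in W_{X_2}\subset\Mth(X_2)$ and $h_*$ are such operators. Two facts I would record at the outset: first, $X_1$ and $X_2$ are then equivalent under locally trivial deformations, so $b_2(X_2)=b_2(X_1)\geq 5$ and Theorems~\ref{thm:hodge-torelli} and~\ref{thm:semidirect} apply to both; second, since parallel transport preserves the BBF form and the oriented positive cone, $f$ maps $\Pos(X_1)$ onto $\Pos(X_2)$, and for every $w\in W_{X_2}$ the composite $w^{-1}\circ f$ is again a locally trivial parallel transport Hodge isometry $H^2(X_1,\Z)\to H^2(X_2,\Z)$.

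First I would locate the correcting Weyl element $w$. By Proposition~\ref{prop:fe-in-bdp}, Lemma~\ref{lem:mov-pos} and Theorem~\ref{thm:poly} one has $\FE{X_2}=\Movint(X_2)$, and $\BAmp(X_2)$ is dense in $\FE{X_2}$ (these cones share the closure $\widebar{\BAmp(X_2)}=\Movbar(X_2)=\FEbar{X_2}$). By Theorem~\ref{thm:semidirect}, items~(1) and~(2), the cones $\{w\cdot\FE{X_2}:w\in W_{X_2}\}$ are precisely the connected components of the dense open subset $\mathcal U\subset\Pos(X_2)$; hence $\bigcup_{w\in W_{X_2}}w\cdot\BAmp(X_2)$ is open and dense in $\Pos(X_2)$. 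Since $\BAmp(X_1)$ is a nonempty open cone in $\Pos(X_1)$ and $f$ maps $\Pos(X_1)$ onto $\Pos(X_2)$, there is $\alpha\in\BAmp(X_1)$ with $f(\alpha)\in w\cdot\BAmp(X_2)$ for some (necessarily unique) $w\in W_{X_2}$. For this $w$ one gets $(w^{-1}\circ f)(\alpha)\in\BAmp(X_2)$ while $\alpha\in\BAmp(X_1)$, so
\[
(w^{-1}\circ f)\bigl(\BAmp(X_1)\bigr)\cap\BAmp(X_2)\neq\emptyset .
\]

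Next I would apply Theorem~\ref{thm:hodge-torelli} to the parallel transport Hodge isometry $p:=w^{-1}\circ f$: it produces a birational map $h:X_1\ratl X_2$ with $h_*=p$, i.e. $f=w\circ h_*$, which gives existence. For uniqueness, suppose $f=w_1\circ h_{1*}=w_2\circ h_{2*}$ with $w_i\in W_{X_2}$ and $h_i:X_1\ratl X_2$ birational. Then $w_2^{-1}w_1=h_{2*}\circ h_{1*}^{-1}=(h_2\circ h_1^{-1})_*$ lies in $W_{X_2}\cap\Mtb(X_2)$, which is trivial by Theorem~\ref{thm:semidirect}(5); hence $w_1=w_2$ and $h_{1*}=h_{2*}$. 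Consequently $\psi:=h_2^{-1}\circ h_1$ is a birational self-map of $X_1$ with $\psi_*=\id$ on $H^2(X_1,\Z)$; in particular $\psi$ preserves $\Amp(X_1)$, and a birational map between $\Q$-factorial terminal projective primitive symplectic varieties that preserves an ample class is a biregular isomorphism (cf. the remark following Theorem~\ref{thm:hodge-torelli}). Thus $\psi\in\Aut(X_1)$ acts trivially on $H^2(X_1,\Z)$, and $h$ is determined uniquely up to pre-composition with such automorphisms.

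I expect the only genuinely delicate points to be (a) the fact that locally trivial parallel transport preserves the positive cone \emph{with its orientation} --- without it the cones $f(\BAmp(X_1))$ and $w\cdot\BAmp(X_2)$ could never meet and the whole strategy would collapse --- and (b) the passage from closures to the \emph{open} birational ample chambers, i.e. verifying that $\bigcup_{w}w\cdot\BAmp(X_2)$ is still dense, so that a sufficiently general $f(\alpha)$ avoids all of the (countably many) walls. Granting these, the argument is essentially formal and parallels Markman's treatment of the smooth case in \cite[Section~6]{Mar11}; all the substance already resides in Theorems~\ref{thm:hodge-torelli} and~\ref{thm:semidirect}.
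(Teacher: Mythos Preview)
Your proof is correct, and the uniqueness argument is identical to the paper's, but the existence part follows a genuinely different route. The paper first invokes the global Torelli theorem \cite[Theorem~1.2]{BL18} to produce \emph{some} birational map $g:X_1\ratl X_2$, so that $f\circ g^*\in\Mth(X_2)$; it then applies the semidirect-product decomposition of Theorem~\ref{thm:semidirect}(5) to this self-isometry of $X_2$, writing $f\circ g^*=w\circ\phi_*$ with $\phi\in\Bir(X_2)$, and sets $h:=\phi\circ g$. You instead locate $w$ \emph{first}, geometrically, by using the chamber decomposition of $\Pos(X_2)$ (Theorem~\ref{thm:semidirect}(1)--(2) together with Proposition~\ref{prop:fe-in-bdp}) to arrange that $w^{-1}\circ f$ carries $\BAmp(X_1)$ into $\BAmp(X_2)$, and then appeal directly to the two-variety Hodge-theoretic Torelli (Theorem~\ref{thm:hodge-torelli}) to get $h$. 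Both arguments rest on the same ingredients (Torelli for primitive symplectic varieties and the semidirect-product structure), but yours avoids the reduction to a self-map of $X_2$ and is arguably more transparent about \emph{why} the particular $w$ works; the paper's argument is slightly shorter because it outsources the chamber-matching to the already-proved decomposition $\Mth(X_2)=W_{X_2}\rtimes\Mtb(X_2)$. Your caveat~(a) about orientation is indeed the one nontrivial point you rely on implicitly: parallel transport lands in $\O^+$ (cf.\ the proof of Lemma~\ref{lem:semi-alg}(1)), which guarantees $f(\Pos(X_1))=\Pos(X_2)$ rather than $-\Pos(X_2)$.
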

\begin{proof}
By hypothesis $X_1$ and $X_2$ belong to the same connected component of the moduli space ${\mathfrak M}^{\lt}_\Lambda$ and have the same 
period. Therefore, by \cite[Theorem 1.2]{BL18} there exists a birational map 
$g:X_1\dashrightarrow X_2$. Here we use the hypothesis $b_2(X_1)\geq 5$. The map $g$  induces a parallel trasport operator $g^*: H^2(X_2,\mathbb Z)\to H^2(X_1,\mathbb Z)$ which is a Hodge isometry by \cite[Corollary 6.18]{BL18}. Then $f\circ g^*\in \Mth(X_2)$ and by Theorem~\ref{thm:semidirect}, item (5), there exists a unique $w\in W_{X_2}$ such that $w^{-1}f\circ g^*\in \Mtb(X_2)$. 
Let now $\phi:X_2\dashrightarrow X_2$ the birational map such that $\phi_* =w^{-1}f\circ g^*$. Then setting $h:=\phi\circ g$ we are done. 

Suppose now that $h':X_1\dashrightarrow X_2$ is a birational map and $w'\in W_{X_2}$ such that $f=w'\circ h'_*$.
Then $w^{-1}w'= ((h')^{-1}h )_*\in W_{X_2}\cap \Mtb(X_2)=\{\id\}$, which shows that $w=w'$ and $h'_*=h_*$. Therefore $h^{-1}\circ h'$ is a birational transformation of $X_1$ inducing
the identity on $H^2(X_1,\mathbb Z)$. In particular $h^{-1}\circ h'$ preserves the ample cone, hence it must be biregular. 
\end{proof}

%-------------------------------------------
%-------------------------------------------
%-------------------------------------------
\section{Proof of the birational cone conjecture}\label{sec:bircone}
%-------------------------------------------
%-------------------------------------------
%-------------------------------------------
In this section we prove the $\Movbar^+$-version of Morrison's movable cone conjecture for projective primitive symplectic varieties with $\Q$-factorial terminal singularities, see the introduction. For this, we closely follow Markman's proof in the smooth case, see \cite[Section~6.5]{Mar11}. 

\begin{lemma}\label{lem:maxfaces}
Let $X$ be a primitive symplectic variety with $\Q$-factorial terminal singularities and assume that $b_2(X)\geq 5$.
Let $E$ be a prime exceptional divisor on $X$. Then $E^\perp\cap {\FEbar X}$ is a top dimensional cone in the hyperplane $E^\perp$. In particular, $W_X$ cannot be generated by any proper subset of reflections in prime exceptional divisors 
$\{R_e: e\ {\rm {prime\ exceptional}}\}$.
\end{lemma}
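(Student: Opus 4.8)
The plan is to exhibit, for a given prime exceptional divisor $E$ on $X$, a locally trivial deformation $X_t$ of $X$ whose Picard lattice is exactly $\Z\cdot[E_t]$ (or a rank-one lattice spanned by the Cartier generator of $E_t$), on which $E_t$ is still prime exceptional; then $E_t^\perp \cap \overline{\FE{X_t}}$ is visibly a hyperplane section of the positive cone, hence top-dimensional, and I transport this statement back to $X$. Concretely, first I would invoke Proposition~\ref{prop:prime-is-se}, item~(1): $\sO_X(E)$ is stably exceptional, so along a dense open subset of its Hodge locus $\Hdg_{[E]}$ the line bundle deforms to $\sO_{X_t}(aE_t)$ with $E_t$ prime exceptional. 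Choosing $t$ very general in $\Hdg_{[E]}$, the local Torelli theorem \cite[Proposition~5.5]{BL18} forces $\rho(X_t)=1$, so $\Pic(X_t)_\R=\R\cdot[E_t]$, and $\Hdg_{[E]}$ has dimension $h^{1,1}(X)-1$, i.e.\ $[E]^\perp$ has the expected codimension one.

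Next I would identify $\overline{\FE{X_t}}$ on such a very general fiber. Since $\rho(X_t)=1$ and $E_t$ is the unique prime exceptional divisor (its class spans $\NS(X_t)_\R$ and $q_{X_t}(E_t)<0$, so $E_t$ is on the "negative" side), one of the two rays of $\Pos(X_t)\cap\Pic(X_t)_\R=\Pos(X_t)$ restricted to the Picard line is the ample ray and the other is $\R_{>0}[E_t]$; the fundamental exceptional chamber $\FE{X_t}$, being the component of $\Pos(X_t)$ on which $q_{X_t}(\,\cdot\,,[E_t])>0$, is then a genuine half of the positive cone whose boundary wall is exactly $[E_t]^\perp\cap\Pos(X_t)$. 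Hence $E_t^\perp\cap\overline{\FE{X_t}}=[E_t]^\perp\cap\overline{\Pos(X_t)}$ has dimension $h^{1,1}(X)-1$, i.e.\ it is top-dimensional in the hyperplane $E_t^\perp$. Because $\overline{\FE X}$, $\Pos(X)$ and the BBF form all deform in a locally trivial family and $[E]$ stays a $(1,1)$-class along $\Hdg_{[E]}$, the same dimension count holds for $E^\perp\cap\overline{\FE X}$ on $X$ itself — alternatively, $\overline{\FE X}$ is already the interior-closure of a generalized convex polyhedron by Theorem~\ref{thm:poly}, and $[E]^\perp\cap\Pos(X)$ is a wall of it (a wall of the $W_X$-chamber $\FE X$), so it is top-dimensional in $E^\perp$ by construction.

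For the "in particular" clause I would argue as follows: the reflection $R_E$ is the reflection in the wall $E^\perp\cap\overline{\FE X}$ of the fundamental chamber, and by Theorem~\ref{thm:semidirect}, item~(3), $W_X$ is generated precisely by the reflections $R_e$ in the walls of $\FE X$. By the Vinberg--Shvartsman theory \cite[Theorem~6.15]{Mar11}, a set of reflections in the walls of a chamber generates the full reflection group $W_X$ only if no wall is omitted — dropping any wall reflection $R_E$ would enlarge the fundamental chamber across that wall, contradicting that it is already a fundamental domain; equivalently, the walls are in bijection with a minimal generating set of reflections. Hence no proper subset of $\{R_e : e\ \mathrm{prime\ exceptional}\}$ can generate $W_X$. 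The main obstacle I anticipate is the deformation argument in the first two paragraphs: one must be careful that "very general in the Hodge locus" genuinely yields $\rho=1$ \emph{and} keeps $E_t$ prime exceptional simultaneously (this is exactly what Proposition~\ref{prop:prime-is-se} and \cite[Theorem~1.2]{LMP21} buy us), and that the chamber structure $\FE X$ varies compatibly — but since $\FE X$ is cut out by the $q_X$-positivity conditions against prime exceptional classes, and these classes are locally constant in cohomology along $\Hdg_{[E]}$, the compatibility is essentially formal once Theorem~\ref{thm:poly} is in hand.
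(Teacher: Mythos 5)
There is a genuine gap in the main (top-dimensionality) part of your argument. The chamber $\FE X$ is \emph{not} a deformation-invariant object: it is cut out by the walls $E'^\perp$ of \emph{all} prime exceptional divisors $E'$ on the particular variety $X$, and this set of walls jumps when you specialize from a very general point $t\in\Hdg_{[E]}$ (where $\rho(X_t)=1$ and $E_t$ is the only prime exceptional divisor) back to $X$. Your computation on $X_t$ only shows that $E_t^\perp$ meets the closure of the \emph{larger} chamber $\FE{X_t}$ (half of the positive cone) in a top-dimensional set; it says nothing about whether the many additional walls $E'^\perp$ present on $X$ cut the chamber down so much that $E^\perp$ only touches $\FEbar X$ in lower dimension. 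The sentence ``the same dimension count holds for $E^\perp\cap\FEbar X$ on $X$ itself'' is exactly the assertion to be proved, and the proposed alternative via Theorem~\ref{thm:poly} is circular: knowing that $\FE X$ is a connected component of the complement of the hyperplane arrangement (a generalized convex polyhedron) does not tell you that a \emph{given} hyperplane $E^\perp$ supports a codimension-one face of that particular component --- that is precisely the content of the lemma. Once the top-dimensionality is granted, your argument for the ``in particular'' clause is fine, but it inherits the gap.

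The paper's proof (following \cite[Lemma~6.20]{Mar11} verbatim) handles exactly this point by a projection argument whose key input you never use: for two distinct prime divisors one has $q_X(E,E')\geq 0$, proved in the singular setting in \cite[Theorem~3.13]{KMPP19}. Given an ample class $\omega$, its $q_X$-orthogonal projection $\alpha=\omega-\tfrac{q_X(\omega,E)}{q_X(E,E)}E$ onto $E^\perp$ satisfies $q_X(\alpha,E')=q_X(\omega,E')-\tfrac{q_X(\omega,E)}{q_X(E,E)}q_X(E,E')>0$ for every prime exceptional $E'\neq E$, because $q_X(\omega,E')>0$, $q_X(\omega,E)>0$, $q_X(E,E)<0$ and $q_X(E,E')\geq 0$; hence $\alpha\in E^\perp\cap\FEbar X$, and letting $\omega$ vary in the open ample cone the image of the (open) projection is top-dimensional in $E^\perp$. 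This nonnegativity is what controls the other walls and cannot be bypassed by the rank-one degeneration you propose.
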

\begin{proof}
The proof is verbatim as in \cite[Proof of~Lemma 6.20]{Mar11}. Notice that the proof requires that 
$$
q_X(e,e')\geq 0
$$ 
for any two distinct prime divisors $e,e'$, a fact which is proved in \cite[Theorem~3.13]{KMPP19} for possibly singular symplectic varieties. 
\end{proof}

\begin{lemma}\label{lem:mov-ns}
Let $X$ be a projective primitive symplectic variety with $\Q$-factorial terminal singularities and assume that $b_2(X)\geq 5$.
Then
$$
 \Mov(X)^\circ= {\FE X}\cap (\Pic(X)\otimes \mathbb R).
$$
Moreover, $W_X$ acts faithfully on $\Pos(X)$ and each chamber of the positive cone intersects the algebraic positive cone, thus inducing a one-to-one correspondence between the set of exceptional chambers and the chambers in $\Pos(X)$ with respect to the $W_X$-action. In particular, the closure of movable cone $\Movbar(X)$ in $\Pos(X)$ is a fundamental domain for the action of $W_X$ on $\Pos(X)$.
\end{lemma}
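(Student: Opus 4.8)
The plan is to follow Markman's argument in \cite[Section~6.5]{Mar11}, feeding in the singular inputs assembled above; the substantive point is the identity $\Mov(X)^\circ = \FE X \cap \Pic(X)_\R$, after which the ``moreover'' assertions follow formally from the chamber structure already established in Theorems~\ref{thm:poly} and~\ref{thm:semidirect} together with the Vinberg--Shvartsman theory. Throughout, $\Pos(X)$ is the positive cone carrying the reflection-group action, and its intersection with $\Pic(X)_\R$ is the ``algebraic positive cone''.

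First I would establish the equality of cones. For $\Mov(X)^\circ \subseteq \FE X \cap \Pic(X)_\R$, take $D \in \Mov(X)^\circ$: by \eqref{eq cones inclusions} one has $D \in \Pos(X)$, and Lemma~\ref{lem:mov-pos} gives $q_X(D,[E])>0$ for every prime divisor, in particular every prime exceptional divisor, so $D \in \FE X$ (and trivially $D\in\Pic(X)_\R$). For the reverse inclusion, recall that $\FE X$ is open and $\FE X \subseteq \FEbar X \subseteq \Movbar(X)$ by Proposition~\ref{prop:fe-in-bdp}; since $\Mov(X)$ is a convex cone with nonempty interior, the interior of $\Movbar(X)$ in $\Pic(X)_\R$ equals $\Mov(X)^\circ$, so $\FE X \cap \Pic(X)_\R \subseteq \Mov(X)^\circ$. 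Taking closures and using $\Movbar(X)=\overline{\BAmp(X)}\subseteq\FEbar X$ from Proposition~\ref{prop:fe-in-bdp} together with convexity, one also obtains $\Movbar(X) = \FEbar X \cap \Pic(X)_\R$.

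Next I would deduce the remaining statements. \emph{Faithfulness}: if $w\in W_X$ acts trivially on $\Pos(X)$ it fixes $\FE X$, hence $w=\id$ since $W_X$ acts simply transitively on the set of exceptional chambers (Theorem~\ref{thm:semidirect}(1)). \emph{Every chamber meets $\Pic(X)_\R$}: by Theorem~\ref{thm:semidirect}(3), $W_X$ is generated by the reflections $R_e$ in classes $e$ of prime exceptional divisors; these $e$ lie in $\Pic(X)_\R$, so $W_X$ stabilizes $\Pic(X)_\R$. Since $W_X$ is transitive on exceptional chambers, each such chamber is $w\cdot\FE X$ for some $w\in W_X$, and therefore meets $\Pic(X)_\R$ in $w(\FE X\cap\Pic(X)_\R)=w(\Mov(X)^\circ)\neq\emptyset$; by convexity of $\FE X$ this intersection is a single $W_X$-chamber of $\Pos(X)$, and conversely a point of a given $W_X$-chamber of $\Pos(X)$ lies in a unique exceptional chamber, which yields the claimed bijection. \emph{Fundamental domain}: by Theorem~\ref{thm:poly} and \cite[Theorem~6.15]{Mar11} the set $\FEbar X$ is the closure of a fundamental domain for the $W_X$-action on $\Pos(X)$, i.e.\ $\bigcup_{w\in W_X} w\,\FEbar X = \Pos(X)$ with the interiors $w\,\FE X$ pairwise disjoint; intersecting these facts with $\Pic(X)_\R$, using that $W_X$ stabilizes $\Pic(X)_\R$ and that $\Movbar(X)=\FEbar X\cap\Pic(X)_\R$ with interior $\Mov(X)^\circ$, shows that $\Movbar(X)$ is the closure of a fundamental domain for $W_X$ acting on $\Pos(X)$.

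The main obstacle, and the only place requiring real care, is the interplay between the positive cone in which the Vinberg--Shvartsman decomposition lives and the algebraic positive cone $\Pos(X)\cap\Pic(X)_\R$: a priori a chamber of the ambient reflection arrangement need not meet the rational subspace, and a general element of $\Mth(X)$ does not preserve $\Pic(X)_\R$. What rescues the argument is precisely the combination established earlier — that $W_X$ acts transitively on exceptional chambers (Theorem~\ref{thm:semidirect}(1)) \emph{and} is generated by reflections in prime exceptional classes, hence stabilizes $\Pic(X)_\R$ (Theorem~\ref{thm:semidirect}(3)) — so that every exceptional chamber can be moved back to $\FE X$ by an element of $W_X$, and $\FE X$ already meets $\Pic(X)_\R$ in $\Mov(X)^\circ\supseteq\Amp(X)$. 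Everything else is the routine openness, convexity and Baire-category bookkeeping indicated above, exactly as in \cite[Section~6.5]{Mar11}.
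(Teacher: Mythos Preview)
Your proposal is correct and takes essentially the same approach as the paper. The only presentational differences are that for the reverse inclusion the paper re-runs the Boucksom--Zariski argument directly (uniqueness of the decomposition forces $N(L)=0$, hence $L$ is movable by \eqref{eq:star}), whereas you cite Proposition~\ref{prop:fe-in-bdp} and then use openness of $\FE X$ to land in $\Mov(X)^\circ$; and for the ``moreover'' part the paper simply refers to \cite[Lemma~6.22]{Mar11}, while you spell out that same argument via Theorems~\ref{thm:poly} and~\ref{thm:semidirect} and the fact that $W_X$ is generated by reflections in classes lying in $\Pic(X)_\R$.
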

\begin{proof}
One inclusion is given by Lemma \ref{lem:mov-pos}.
Conversely, if $L$ is $q_X$-nef on uniruled prime divisors on $X$, by the unicity of the  Boucksom-Zariski decomposition proved in  
\cite[Theorem~1.1]{KMPP19}, it coincides with its positive part, and therefore by (\ref{eq:star}), the bundle $L$ is movable. 

The rest of the statement is as in the smooth case, see \cite[Lemma~6.22]{Mar11}.
\end{proof}
Let 
$$
\rho: \Mth(X)\to O({\rm NS(X)})
$$
be the restriction homomorphism. Then the following holds.
\begin{lemma}\label{lem:semi-alg}\
Let $X$ be a projective primitive symplectic variety with $\Q$-factorial terminal singularities and assume that $b_2(X)\geq 5$.
\begin{enumerate}
\item The image $\Gamma$ of $\rho$ is a finite index subgroup of $\O^+({\rm NS(X)})$.
\item The kernel of $\rho$ is a subgroup of $\Mtb(X)$
\item $\Gamma$ is a semidirect product of its normal subgroup $W_X$ and the quotient group
$\Gamma_{\bir} :=\Mtb(X)/\ker(\rho) \subset \Gamma$.
\end{enumerate}
\end{lemma}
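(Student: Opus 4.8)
The plan is to establish the three items by combining the semidirect product decomposition of $\Mth(X)$ from Theorem~\ref{thm:semidirect} with the description of $\ker(\rho)$ and a lattice-theoretic argument showing that the image is large. First I would treat item (2): if $g\in\ker(\rho)$, then $g$ acts trivially on $\NS(X)=\Pic(X)$, in particular it fixes $\FE X$ pointwise and a fortiori preserves it, so by Theorem~\ref{thm:semidirect}, item (4), we have $g\in\Mtb(X)$. (One may also phrase this as: $g$ fixes the ample cone, hence by Theorem~\ref{thm:hodge-torelli} and the remark following it, $g$ is induced by an isomorphism $X\to X$, which is in particular a birational self-map.) This immediately gives (2).

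Next, for item (1), the statement that $\Gamma:=\rho(\Mth(X))$ has finite index in $\O^+(\NS(X))$: here I would argue as in \cite[Lemma~6.23]{Mar11}. The monodromy group $\Mt(X)$ has finite index in $\O(H^2(X,\Z)_\tf)$ by \cite[Theorem~8.2]{BL18} (using $b_2(X)\ge 5$, hence in particular $b_2(X)\ne 4$). The subgroup $\Mth(X)$ of Hodge isometries is, inside $\Mt(X)$, the stabilizer of the Hodge decomposition, i.e. the isometries fixing $H^{2,0}(X)$; restricting to $\NS(X)=H^{1,1}(X)\cap H^2(X,\Z)_\tf$ and using that isometries of the full lattice fixing $\NS(X)_\R$ and acting on the transcendental part decompose (up to finite index) as a product, one deduces that $\rho(\Mth(X))$ has finite index in $\O(\NS(X))$; intersecting with the index-two subgroup $\O^+$ preserving the positive cone gives finite index in $\O^+(\NS(X))$. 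The sign constraint $\Gamma\subset \O^+(\NS(X))$ holds because parallel transport operators preserve the positive cone $\Pos(X)$.

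Finally, for item (3): by Theorem~\ref{thm:semidirect}, item (5), $\Mth(X)=W_X\rtimes\Mtb(X)$ with $W_X$ normal. Applying $\rho$, we get $\Gamma=\rho(W_X)\cdot\rho(\Mtb(X))$. Now $W_X$ acts faithfully on $\Pos(X)$, and by Lemma~\ref{lem:mov-ns} each exceptional chamber meets the algebraic positive cone $\Pos(X)\cap\NS(X)_\R$, so the $W_X$-action on chambers is already detected on $\NS(X)$; hence $\rho|_{W_X}$ is injective and we identify $\rho(W_X)=W_X$. In particular $W_X\cap\ker(\rho)=\{\id\}$, so $\ker(\rho)\subset\Mtb(X)$ (already item (2)) maps onto a complement-type subgroup and $\Gamma_{\bir}:=\Mtb(X)/\ker(\rho)=\rho(\Mtb(X))$ is a well-defined subgroup of $\Gamma$. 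Normality of $W_X$ in $\Gamma$ follows by applying $\rho$ to the normality of $W_X$ in $\Mth(X)$ (using that $W_X\hookrightarrow\Gamma$). The intersection $W_X\cap\Gamma_{\bir}$ inside $\Gamma$ is trivial: an element in the intersection lifts to $w\in W_X$ and to some $f_*\in\Mtb(X)$ with $w$ and $f_*$ having the same image in $\Gamma$, so $w^{-1}f_*\in\ker(\rho)\subset\Mtb(X)$, whence $w\in\Mtb(X)\cap W_X=\{\id\}$ by Theorem~\ref{thm:semidirect}, item (5). Therefore $\Gamma=W_X\rtimes\Gamma_{\bir}$.

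I expect item (1) to be the main obstacle: everything else is essentially formal manipulation of the decomposition already in hand, whereas the finite-index statement requires the arithmetic input on the orthogonal group of $H^2$ together with a careful comparison between isometries of the whole lattice and isometries of its algebraic part. The subtlety is that $\Mth(X)$ is the stabilizer of a (generically non-algebraic) Hodge structure, so passing to $\NS(X)$ requires knowing that the ``transcendental'' contribution to the index is finite — this is where the precise form of \cite[Theorem~8.2]{BL18} and a Borel--Harish-Chandra-type finiteness for arithmetic groups acting on the relevant space are invoked, exactly as in \cite[Lemma~6.23]{Mar11}.
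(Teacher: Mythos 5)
Your proposal is correct and takes essentially the same route as the paper, which likewise proves (1) and (2) by the argument of Markman's Lemma 6.23 (with the finite-index statement for the monodromy group from \cite{BL18} as arithmetic input), identifies the stabilizer of $\FE X$ via Theorem~\ref{thm:semidirect}, item (4), for (2), and gets (3) formally from (2) together with Theorem~\ref{thm:semidirect}, item (5). Only a small wording caveat: an element of $\ker(\rho)$ fixes $\NS(X)$ pointwise but in general only preserves $\FE X$ as a set (the chamber sits in $H^{1,1}(X,\R)$ and is cut out by pairing against the algebraic classes $[E]$, which are fixed), and this set-wise invariance is exactly what Theorem~\ref{thm:semidirect}, item (4), requires.
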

\begin{proof}
(1) The proof is exactly as the proof of \cite[Lemma 6.23, item (1)]{Mar11}. Note that the inclusion $\Mth(X)\subset \O^+(H^2(X,\Z)$ has finite index due to \cite[Theorem~1.2, item (1)]{BL18}. 
%  instead of the corresponding result in the smooth case due to Sullivan \cite{Sul77} and Verbitsky \cite[Theorem 1.1]{VerErr}. 

(2) The proof is exactly as the proof of \cite[Lemma 6.23, item (2)]{Mar11}. To have that the stabilizer of $\mathcal {FE}_{X}$ is $\Mtb(X)$ one needs to 
use Theorem \ref{thm:semidirect}, item (4).

(3) It follows immediately from item (2) and Theorem  \ref{thm:semidirect}, item (5).
\end{proof}

Let $\widebar \Mov^{\rm eff}(X)$ be the intersection of the closure of the movable cone $\Mov(X)$ with the convex cone
in $\Pic(X)\otimes \mathbb R$ generated by the classes of effective line bundles. Let $\widebar \Mov^+(X)$  be the convex hull of 
$\widebar \Mov(X)\cap (\Pic(X)\otimes \mathbb Q)$.
\begin{theorem}\label{thm:mov-conj}
%There exists a rational convex polyhedral cone $\Pi$ in $\widebar \Mov(X)_{\mathbb Q}$ such that $\Pi$
Let $X$ be a projective primitive symplectic variety with $\Q$-factorial terminal singularities and assume that $b_2(X)\geq 5$. Then there exists a rational convex polyhedral cone $\Pi$ in $\widebar{\Mov}^+(X)$ such that $\Pi$
is a fundamental domain for the action of $\Gamma_{\bir}$  on $\widebar \Mov^+(X)$.
\end{theorem}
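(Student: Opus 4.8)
The plan is to follow Markman's proof of the smooth case \cite[\S 6.5]{Mar11} step by step, substituting for each of his inputs the singular counterpart established above. First I would descend to the algebraic setting: put $V:=\NS(X)\otimes\R$, of signature $(1,\rho(X)-1)$, let $\Pos(X)\subset V$ be the positive cone and $C$ the convex hull of the rational points of $\overline{\Pos(X)}$, and let $\Gamma:=\rho(\Mth(X))\subseteq\O^+(\NS(X))$ be the image of the restriction homomorphism. By Lemma~\ref{lem:semi-alg}, $\Gamma$ has finite index in $\O^+(\NS(X))$ and decomposes as a semidirect product $\Gamma=W_X\rtimes\Gamma_{\bir}$, with $\Gamma_{\bir}$ realised as a subgroup (the image of $\Mtb(X)$). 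By Lemma~\ref{lem:mov-ns}, $\Mov(X)^\circ=\FE X\cap V$ and $\Movbar(X)$ is a fundamental domain for the $W_X$-action on the positive cone, and by Theorem~\ref{thm:poly} and Lemma~\ref{lem:maxfaces} its walls are exactly the rational hyperplanes $[E]^\perp$ with $E\subset X$ prime exceptional; passing to rational convex hulls, $\Movbar^+(X)$ is a fundamental domain, cut out by those rational hyperplanes, for the $W_X$-action on $C$. Finally, by Theorem~\ref{thm:semidirect}(4) the subgroup $\Gamma_{\bir}$ stabilises $\FE X$, hence $\Mov(X)^\circ$ and $\Movbar^+(X)$.

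Next I would invoke the group-theoretic engine of \cite[\S 6.5]{Mar11}, ultimately a theorem of Looijenga on discrete automorphism groups of convex cones, whose input is now entirely available: $\Gamma$ is of finite index in $\O^+(\NS(X))$, $\NS(X)$ has signature $(1,\rho(X)-1)$, the normal subgroup $W_X\subset\Gamma$ is generated by reflections (Theorem~\ref{thm:semidirect}(3)), and the $W_X$-chamber $\Movbar^+(X)$ is cut out by the rational hyperplanes $[E]^\perp$ with locally finite chamber decomposition (Vinberg--Shvartsman, as already used in Theorem~\ref{thm:poly}). Following Markman's argument one extracts from this a rational polyhedral cone $\Pi\subseteq\Movbar^+(X)$ which is a fundamental domain for the $\Gamma$-action on $C$. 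Using $\Gamma=W_X\rtimes\Gamma_{\bir}$ together with the fact that $\Gamma_{\bir}$ preserves $\Movbar^+(X)$, the cone $\Pi$ is then a fundamental domain for the $\Gamma_{\bir}$-action on $\Movbar^+(X)$: for covering, given $x\in\Movbar^+(X)$ write $x=\gamma p=w\gamma_{\bir}p$ with $p\in\Pi$, $w\in W_X$, $\gamma_{\bir}\in\Gamma_{\bir}$; then $x$ and $\gamma_{\bir}p$ both lie in $\Movbar^+(X)$ and differ by $w\in W_X$, so $w=\id$ when $x$ is interior and $x=\gamma_{\bir}p$, the general case following by passing to closures; for disjointness, the interiors of distinct $\Gamma_{\bir}$-translates of $\Pi$ are disjoint because those of distinct $\Gamma$-translates of $\Pi$ are. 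This $\Pi$ is the cone asserted by the theorem.

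I expect the one genuinely delicate point to be the precise invocation of Looijenga's theorem: checking that the chamber decomposition of $\Pos(X)$ by prime exceptional divisors satisfies its hypotheses, and that the fundamental domain can be arranged to be simultaneously rational polyhedral and contained in the Weyl chamber $\Movbar^+(X)$ (the standard subtlety here being the behaviour of rational polyhedral cones near the isotropic boundary of $C$). All of the geometric content feeding this — the semidirect decomposition, the identification $\FE X=\Mov(X)^\circ$, the polyhedrality with rational walls, and the finite index of $\Gamma$ in $\O^+(\NS(X))$ — is supplied by Theorems~\ref{thm:poly} and~\ref{thm:semidirect} and Lemmas~\ref{lem:maxfaces}, \ref{lem:mov-ns} and~\ref{lem:semi-alg}, so that what remains is precisely the bookkeeping already carried out in \cite[\S 6.5]{Mar11}.
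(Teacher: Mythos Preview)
Your proposal is correct and follows essentially the same route as the paper: assemble the five structural inputs (signature of $\NS(X)$, arithmeticity of $\Gamma$, the semidirect product $\Gamma=W_X\rtimes\Gamma_{\bir}$, that $W_X$ is generated by reflections in prime exceptional classes, and that $\Movbar(X)$ is a $W_X$-fundamental domain) from the earlier results, then feed them into Markman's argument from \cite[\S 6.5]{Mar11}.

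The one point worth flagging is the ``engine'' you invoke. You attribute the construction of $\Pi$ to Looijenga's theorem and then worry about verifying its hypotheses near the isotropic boundary; the paper instead follows Sterk \cite[Lemma~2.4]{sterk} directly, choosing $x_0\in\Mov(X)$ with trivial $\Gamma$-stabilizer and taking the Dirichlet domain $\Pi=\{x:q_X(x_0,x)\le q_X(x_0,\gamma x)\ \forall\gamma\in\Gamma\}$. This is more concrete and sidesteps exactly the delicacy you anticipated: rationality and polyhedrality of the Dirichlet domain, and its containment in $\Movbar^+(X)$, come out of Sterk's argument once items (1)--(5) are in place, without a separate appeal to Looijenga. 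Your subsequent reduction from a $\Gamma$-fundamental domain on $C$ to a $\Gamma_{\bir}$-fundamental domain on $\Movbar^+(X)$ via the semidirect product is correct and is implicit in the paper's reference to \cite[Theorem~6.25]{Mar11}.
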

% \begin{remark}
% The result above is a weak version of the Morrison movable cone conjecture (in which the statement involves $\widebar \Mov^{\rm eff}(X)$). 
% It has been proven in the smooth case by Markman in \cite[Theorem 6.25]{Mar11}.
% \end{remark}
\begin{proof}[Proof of Theorem \ref{thm:mov-conj}]
The following assertions hold:
\begin{enumerate}
\item $\Pic(X)$ is a lattice of signature $(1,\bullet)$ and $\Gamma$ is an arithmetic subgroup of $\O^+(\Pic(X))$;
\item $W_X\subset \O^+(\Pic(X))$ is the reflection group generated by reflections in prime exceptional classes; 
\item  $\Gamma_{\bir}$ is equal to the stabilizer in  $\Gamma$ of prime exceptional classes;
\item $W_X$ is a normal subgroup of $\Gamma$ and the latter is semidirect product of $\Gamma_{\bir}$ and $W_X$. 
\item $\widebar \Mov(X)$ is a fundamental domain for the action of $W_X$ on $\mathcal \Pos(X)$ cut out by closed halfspaces
associated to prime exceptional classes.
\end{enumerate}
The claim about the signature in Item (1) is \cite[Lemma 5.3]{BL18} and \cite[Theorem~2, Item~(2)]{Sch20}, while the second part is the content of Lemma \ref{lem:semi-alg}, item (1).

Item (2) corresponds to Theorem \ref{thm:semidirect}, item (3).  

Item (3) follows from Theorem \ref{thm:semidirect}, item (4).

Item (4) is the content of Lemma \ref{lem:semi-alg}, item (3).

Item (5) is Lemma \ref{lem:mov-ns}.

As in the proof of the smooth case, see \cite[Theorem 6.25]{Mar11},  one follows Sterk's strategy \cite[Lemma 2.4]{sterk} to show, by using the above assertions, that for a class $x_0\in \Mov(X)$ not fixed by any $\gamma\in \Gamma\setminus\{{\id}\}$ the Dirichlet domain 
$$
\Pi := \{ x: q_X(x_0,x)\leq q_X(x_0, \gamma(x))\ \forall \gamma\in \Gamma\},
$$
 is rational and polyhedral and is the fundamental domain we are looking for.
\end{proof}
We deduce the following consequence.
\begin{theorem}\label{thm:finite}
For every integer $d\not=0$ the number of ${\Bir}(X)$-orbits of complete linear
systems, which contain an irreducible divisor of Beauville--Bogomolov--Fujiki degree $d$, is finite.
For every positive integer k there is only a finite number of ${\Bir}(X)$-orbits of complete
linear systems, which contain some irreducible divisor $L$  with $q_X(L)=0$ whose cohomology class is $k$ times a primitive isotropic class.
\end{theorem}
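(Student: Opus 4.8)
The plan is to adapt Markman's argument in the smooth case \cite{Mar11}, feeding in the birational cone conjecture (Theorem~\ref{thm:mov-conj}), the decomposition $\Mth(X)=W_X\rtimes\Mtb(X)$ (Theorem~\ref{thm:semidirect}), and the description of $\Movbar(X)$ as a fundamental domain for $W_X$ on $\Pos(X)$ cut out by the halfspaces $\{q_X(\cdot,[E])\geq 0\}$ with $E$ prime exceptional (Lemma~\ref{lem:mov-ns}).

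First I would determine which classes can carry an irreducible divisor of a given square. If a complete linear system $|M|$ contains an irreducible divisor $D$, I write $D=P(D)+N(D)$ for its Boucksom--Zariski decomposition \cite[Theorem~1.1]{KMPP19}; since $D$ is prime, $N(D)\in\{0,D\}$, and as $q_X(P(D))\geq 0$ while $q_X(N(D))\leq 0$ with equality only for $N(D)=0$, one obtains $N(D)=0$ whenever $q_X(D)\geq 0$ — then $D=P(D)\in\Movbar(X)$ by Corollary~\ref{corollary kmpp} — and $N(D)=D$ whenever $q_X(D)<0$ — then $D$ is prime exceptional ($\Q$-Cartier by $\Q$-factoriality) and $|M|=\{D\}$ by \eqref{eq:star}. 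Since $\Pic^0(X)=0$, the class of $D$ determines $|M|$, and $\Bir(X)$ acts on complete linear systems through its image in $\O(\NS(X))$, which is $\Gamma_{\bir}$ (birational selfmaps act on $\NS(X)$ through $\Mtb(X)$, and those acting trivially fix every class, hence every system). So the statement reduces to: for $d\neq 0$ there are finitely many $\Gamma_{\bir}$-orbits among the $\ell\in\Movbar(X)\cap\NS(X)$ with $q_X(\ell)=d$ (case $d>0$) and among the prime exceptional classes of square $d$ (case $d<0$); and for each $k>0$ finitely many $\Gamma_{\bir}$-orbits among the isotropic $\ell\in\Movbar(X)\cap\NS(X)$ that are $k$ times a primitive class.

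For $d>0$ and the isotropic case I would use that $\Movbar(X)\cap\NS(X)\subset\widebar{\Mov}^+(X)$, so by Theorem~\ref{thm:mov-conj} each such $\ell$ is $\Gamma_{\bir}$-equivalent into the rational polyhedral fundamental domain $\Pi$; it then remains to invoke the elementary fact that a rational polyhedral cone contained in the closed positive cone of a hyperbolic lattice (here $\Pic(X)$, of signature $(1,\rho(X)-1)$) contains only finitely many lattice vectors of any fixed nonzero square, and, for each $k$, only finitely many $k$-multiples of a primitive isotropic vector, the isotropic lattice points lying on the finitely many isotropic extremal rays of $\Pi$. For $d<0$ I would argue with walls: by Lemma~\ref{lem:maxfaces} the hyperplanes $[E]^\perp$, $E$ prime exceptional, are precisely the supporting hyperplanes of the (rational) facets of $\Movbar(X)=\FEbar X$, and since $\Pi$ is a rational polyhedral fundamental domain for the $\Gamma_{\bir}$-action on $\widebar{\Mov}^+(X)$ and the tiling $\{\gamma\Pi\}_{\gamma\in\Gamma_{\bir}}$ is locally finite, every such facet is $\Gamma_{\bir}$-equivalent to one lying on one of the finitely many facets of $\Pi$; hence there are finitely many $\Gamma_{\bir}$-orbits of prime exceptional classes, in particular of those of square $d$. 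Assembling the cases yields the theorem.

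The hard part will be the purely lattice-theoretic counting of lattice points of fixed square in a rational polyhedral subcone of the positive cone, together with the book-keeping identifying $\Bir(X)$-orbits of complete linear systems with $\Gamma_{\bir}$-orbits of classes. The counting statement is classical, and is exactly why the value $d=0$ must be excluded in the first assertion and replaced by the primitive-multiple normalization in the second; the reduction rests on $\Mth(X)=W_X\rtimes\Mtb(X)$ and on \eqref{eq:star}, i.e.\ on the positive part of the Boucksom--Zariski decomposition controlling the complete linear system.
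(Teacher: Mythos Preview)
Your proposal is correct and follows essentially the same approach as the paper, which simply says the proof is ``as in the smooth case once one has the Boucksom--Zariski decomposition in the singular setting, provided by \cite[Theorem~1.1]{KMPP19}, and Theorem~\ref{thm:mov-conj}.'' You have in fact spelled out the details of Markman's argument (the case split via the Boucksom--Zariski decomposition, the reduction to $\Gamma_{\bir}$-orbits, the lattice-point count in $\Pi$ for $d\geq 0$, and the facet argument for $d<0$) that the paper leaves implicit.
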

\begin{proof}
The proof is  as in the smooth case once one has the Boucksom-Zariski decomposition in the singular setting, provided by \cite[Theorem~1.1]{KMPP19},
and Theorem~\ref{thm:mov-conj}.
\end{proof}

\begin{proof}[Proof of Corollary \ref{cor:ogui}]
The proof goes  as in the smooth case treated in \cite[Proof of Theorem 1.3, (1) and (2)]{Ogui14}, once we replace \cite[Theorem 4.2]{Ogui14} with the birational cone conjecture proved above in the singular case and \cite[Proposition 4.1]{Ogui14} with (\ref{eq cones inclusions}) and Lemma \ref{lemma pos in big}. 
\end{proof}

%-------------------------------------------

%-------------------------------------------
%-------------------------------------------
%-------------------------------------------
\section{From the birational cone conjecture to the cone conjecture}\label{section birational to cone}
%-------------------------------------------
%-------------------------------------------
%-------------------------------------------

In this section, we will finish the proof of Conjecture \ref{conj:cone} in our setting.  In our strategy we will follow \cite{MY15}. 
The ingredients we need are the following:
\begin{enumerate}
\item The birational cone conjecture, which we proved in Theorem \ref{thm:mov-conj}.
\item The (locally trivial) deformation invariance of wall divisors, see Proposition~\ref{prop:walls_deform} below for a precise statement and a proof. 
\end{enumerate}

\begin{definition}\label{definition wall divisor}
Let $X$ be a projective primitive symplectic variety with $\Q$-factorial and terminal singularities and let $D$ be a Cartier divisor on $X$. Then $D$ is called a \emph{wall divisor} if the following  conditions hold:
\begin{enumerate}
    \item Negativity: $q_X(D)<0$.
    \item ``Wallness": for all $\varphi\in \Mth(X)$ we have $\varphi(D)^\perp \cap \BAmp(X)=\emptyset$.
    \end{enumerate}
\end{definition}

\begin{proposition}
Let $X$ be a primitive symplectic variety with $\Q$-factorial and terminal singularities and let $\pi:\, X\to Z $ be a relative Picard rank one contraction. Let $C$ be a contracted curve. Then the dual class $D$ to $C$ deforms to a wall divisor on all projective  deformations $X_t\in \Hdg_{[C]}$ in the Hodge locus of $C$.
\end{proposition}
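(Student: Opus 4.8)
The plan is to reduce the statement to the case in which $X$ and $Z$ are projective, and then to verify separately the two conditions of Definition~\ref{definition wall divisor}. Write $D=[C]^\vee\in H^{1,1}(X,\Q)$ for the dual class, so that $q_X(D,\beta)=C\cdot\beta$ for every $\beta\in H^2(X,\Q)$. By construction $\Hdg_{[C]}$ is the sublocus of $\Def^\lt(X)$ over which $D$ stays of type $(1,1)$, and because $\pi$ has relative Picard rank one the lattice $N$ in \eqref{eq:H2} equals $\Q D$ (as in the proof of Corollary~\ref{corollary curve deforms}); hence $\Hdg_{[C]}\cong\Def^\lt(Z)$ by Proposition~\ref{prop defo}, and by Proposition~\ref{proposition contracted curves deform} the contraction itself deforms over all of $\Hdg_{[C]}$, giving for each $t\in\Hdg_{[C]}$ a relative Picard rank one contraction $\pi_t\colon X_t\to Z_t$ that contracts a curve $C_t$ with $[C_t]^\vee=D_t$, the parallel transport of $D$; moreover $Z_t$ is projective whenever $X_t$ is, being Moishezon (\cite[Proposition~5]{Nam02}). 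It therefore suffices to prove the following: if $X$ is a projective primitive symplectic variety with $\Q$-factorial terminal singularities admitting a relative Picard rank one contraction $\pi\colon X\to Z$ onto a projective primitive symplectic variety, then the dual class $D$ of a contracted curve is a wall divisor on $X$.

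For the negativity $q_X(D)<0$, I would take $L=\pi^{*}A$ with $A$ ample on $Z$. Since $\pi$ is birational, $L$ is nef and big, so $q_X(L)^{n}$ is a positive multiple of $\int_X L^{2n}>0$ by the Fujiki relation, whereas $q_X(L)\geq 0$ because nef classes lie in $\overline{\Pos(X)}$; hence $q_X(L)>0$. As $C$ is contracted by $\pi$, we have $q_X(D,L)=C\cdot L=0$, so the nonzero class $D$ lies in the negative definite lattice $L^{\perp}$ and $q_X(D)<0$. By deformation invariance of $q$ this gives condition~(1) on every $X_t$.

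For the ``wallness'' I would first prove $D^{\perp}\cap\BAmp(X)=\emptyset$ and then propagate it over $\Mth(X)$. If $\pi$ is divisorial with exceptional divisor $E$, then $\rho(X/Z)=1$ forces $D\in\Q_{>0}[E]$ (the general ruling curve of $E$ is numerically proportional to $C$, and its dual class is a positive multiple of $[E]$, cf.\ Remark~\ref{remark dual curve}), so $D^{\perp}=[E]^{\perp}$ is disjoint from $\FE X\supseteq\BAmp(X)$ by the definition of $\FE X$ and Proposition~\ref{prop:fe-in-bdp}. If $\pi$ is small, then $D^{\perp}\cap\Nef(X)$ is a facet of $\Nef(X)$ — a facet since $\rho(X/Z)=1$ — lying in the interior of $\Movbar(X)$; crossing it produces a flop $g\colon X\ratl X^{+}$ to another projective $\Q$-factorial terminal primitive symplectic variety, so $D^{\perp}$ separates the adjacent chambers $\Amp(X)$ and $g^{*}\Amp(X^{+})$ in the subdivision of $\Mov(X)^{\circ}$ into ample cones of birational models. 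Since every $q_X$-positive movable class is semi-ample on a birational model (the MMP argument in the proof of Proposition~\ref{proposition kmpp} together with Kawamata's base-point-free theorem), $\BAmp(X)$ is the union of the open chambers of this subdivision and therefore misses $D^{\perp}$. In either case $D^{\perp}\cap\BAmp(X)=\emptyset$. For an arbitrary $\varphi\in\Mth(X)$, writing $\varphi=w\circ\phi_{*}$ with $w\in W_X$ and $\phi\in\Bir(X)$ (Theorem~\ref{thm:semidirect}), the factor $\phi_{*}$ preserves $\BAmp(X)$ while $W_X$ preserves, together with all birational maps and all locally trivial deformations, the chamber decomposition of $\Pos(X)$ whose chambers are the cones of $\BAmp(X)$; so the wallness condition for every $\varphi$ is equivalent to the assertion that $D^{\perp}\cap\Pos(X)$ is contained in the union of walls of that decomposition, i.e.\ that $g(D)$ is orthogonal to no ample class on any primitive symplectic $Y$ admitting a Hodge-isometric parallel transport $g\colon H^{2}(X)\to H^{2}(Y)$. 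The period relation, together with the global Torelli theorems~\ref{thm:hodge-torelli} and~\ref{thm:semidirect-gen}, identifies such a $Y$ — up to birational equivalence and the $W$-action — with a member of the family $\{X_t\}_{t\in\Hdg_{[C]}}$, on which $\pm g(D)$ is again a positive multiple of the dual of a contracted curve of a relative Picard rank one contraction, and the previous case applies.

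The step I expect to be the main obstacle is this last propagation to all of $\Mth(X)$ in the presence of small (flopping) contractions: then $D$ is not proportional to any effective divisor, so one cannot argue through the equality $\FE X=\SE X$, and one is forced to control the whole monodromy-invariant wall-and-chamber structure of $\Pos(X)$ and to match the $\Mth(X)$-orbit of $D^{\perp}$ with its walls — which is exactly where Theorem~\ref{thm:semidirect}, the global Torelli theorem, and the deformation of the contraction (Corollary~\ref{corollary curve deforms}) must be combined. A technically lighter route, if one invokes the locally trivial deformation invariance of wall divisors (Proposition~\ref{prop:walls_deform}), is to reduce first to a very general projective member $X_0$ of $\Hdg_{[C]}$, which has $\rho(X_0)=2$; then $\NS(X_0)_{\R}$ is a plane, the $\Mth(X_0)$-chamber structure of $\Pos(X_0)$ is completely explicit, and wallness of $D_0$ can be verified by inspection before transporting back.
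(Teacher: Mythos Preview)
Your reduction to the projective case and the negativity argument are fine (though for the latter you should say explicitly that $L^{\perp}$ is taken inside $H^{1,1}(X,\R)$ or $\Pic(X)_{\R}$, where it is indeed negative definite; in the full $H^2$ it is not).

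The wallness argument, however, takes a much harder road than necessary and contains a genuine gap. Your propagation step asserts that ``$W_X$ preserves \dots\ the chamber decomposition of $\Pos(X)$ whose chambers are the cones of $\BAmp(X)$''. This is not true: the chambers $f^*\Amp(X')$ tile only the fundamental exceptional chamber $\FE X$, and a nontrivial $w\in W_X$ moves $\FE X$ to a \emph{different} exceptional chamber (Theorem~\ref{thm:semidirect}, items (1)--(2)). So from $D^\perp\cap\BAmp(X)=\emptyset$ you cannot conclude $w(D)^\perp\cap\BAmp(X)=\emptyset$ by any chamber-preservation argument. Your subsequent fallback via global Torelli is in the right spirit but is left vague precisely at the point where it matters (why is $\pm g(D)$ again dual to a contracted curve on the birational model of $Y$?), and the ``lighter route'' through Proposition~\ref{prop:walls_deform} does not help: to invoke it you must first know that $D$ is a wall divisor on \emph{some} fiber, which is exactly the statement you are proving. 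Your route also forces the extra hypothesis $b_2\geq 5$ through Theorem~\ref{thm:semidirect}, which the proposition does not assume.

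The paper avoids all of this with a single observation that makes the case split and the semidirect product unnecessary. Given $\varphi\in\Mth(X_t)$ and a birational model $f\colon X_t\dashrightarrow X_t'$, the composite $f_*\circ\varphi$ is a Hodge isometry (Theorem~\ref{theorem birational monodromy} for $f_*$), so $(f_*\circ\varphi)([C_t])$ is again a Hodge class of curve type on $X_t'$, i.e.\ $X_t'$ (with the induced marking) lies in the Hodge locus of $[C]$. Since by Corollary~\ref{corollary curve deforms} the curve $C$ deforms over its entire Hodge locus, a nonzero (possibly negative) multiple of $(f_*\circ\varphi)([C_t])$ is the class of an effective curve on $X_t'$ and therefore cannot be orthogonal to $\Amp(X_t')$. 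This handles all $\varphi$ and all birational models at once, with no divisorial/small dichotomy and no appeal to $W_X$, $\Mtb$, or global Torelli. The key point you are missing is that the relevant invariant is not ``$D^\perp$ is a wall of some chamber decomposition'' but rather ``$\pm[C]$ stays effective under every Hodge-isometric parallel transport'', which follows directly from the deformation of the contracted curve.
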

\begin{proof}
By moving inside the Hodge locus of $C$ and deforming $\pi$ along the way as in the proof of Proposition \ref{prop defo}, we can directly assume that $X_t$ is projective, as projective deformations are dense in any family which is non trivial in moduli by \cite[Corollary~6.10]{BL18}. We consider a deformation $C_t\subset X_t$ of $C$ as in Corollary~\ref{corollary curve deforms} and denote $D_t=[C_t]^\vee$. We have to verify that $D_t$ is a wall divisor.

The negativity condition follows as $C$ is contracted (see \cite[Lemma~5.21]{BL18}) and $q_X$ is a topological invariant. For the wallness, let $\varphi\in\Mth(X)$ and consider a birational model $f:X_t\dashrightarrow X_t'$ such that $X_t'$ is $\Q$-factorial and terminal. Since $f_*$ is a Hodge isometry $(f_*\circ \varphi)\left([C_t]\right)$ remains of type $(2n-1,2n-1)$, i.e. it lies in the Hodge locus of $[C]$. As again by Corollary~\ref{corollary curve deforms} the curve $C_t$ deforms over all its Hodge locus, a (non-zero, possibly negative) multiple of $(f_*\circ \varphi)\left([C_t]\right)$ is effective and cannot be orthogonal to $\Amp(X_t')$.
As $D_t$ is dual to $C_t$, the same holds for $\varphi(D_t)$ and the claim is proven.
\end{proof}

\begin{lemma}\label{lem:wall_are_extremal}
Let $X$ be a projective primitive symplectic variety with $\Q$-factorial and terminal singularities with $b_2(X)\geq 5$ and let $D$ be a wall divisor. Then there exists $\varphi\in\Mth(X)$ and a birational model $X'$ such that $\varphi(D)$ is dual to an extremal curve $C'$ on $X'$ and $C'$ deforms in its Hodge locus.
\end{lemma}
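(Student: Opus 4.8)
The plan is to combine the structure theory for the movable cone developed in Section~\ref{section semidirect} with the wall-chamber decomposition of $\Pos(X)$ coming from Vinberg--Shvartsman, exactly mirroring Markman's argument (cf. \cite[Section~6.6]{Mar11}). First I would pick a general ample class $A$ on $X$ and consider the wall $D^\perp \cap \Pos(X)$. Since $D$ is a wall divisor, $q_X(D)<0$, so $D^\perp$ is a hyperplane meeting $\Pos(X)$, and the wallness condition says that for every $\varphi\in\Mth(X)$ the hyperplane $\varphi(D)^\perp$ avoids $\BAmp(X)$. By Proposition~\ref{prop:fe-in-bdp} we have $\BAmp(X)\subseteq \FE X\subseteq\Movbar(X)$, and by Theorem~\ref{thm:semidirect} the $\Mth(X)$-translates of $\FE X$ (equivalently, the $W_X$-translates) tile $\Pos(X)$ up to the reflection hyperplanes; these hyperplanes are precisely the $\ell^\perp$ for $\ell$ a class of a prime exceptional divisor, after applying elements of $W_X$. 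The key point will be to show that some $\Mth(X)$-translate of $D$ is dual to an \emph{extremal} contracted curve on a birational model: i.e., its orthogonal hyperplane bounds a face of $\Movbar(X)$ of maximal dimension which is not one of the walls coming from prime exceptional divisors.

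Here are the steps in order. (i) Using $\Movbar(X) = \widebar{\BAmp(X)}$ from Proposition~\ref{prop:fe-in-bdp} and the fact that $\BAmp(X)$ is the union of the $f^*\Amp(X')$ over birational models $f:X\ratl X'$, choose a point $x_0$ in $\BAmp(X)$ very general so that it lies in $f_0^*\Amp(X'_0)$ for some fixed model. (ii) Consider the segment from $x_0$ toward the wall $D^\perp$; since $D^\perp$ misses $\BAmp(X)$ but $D$ has negative square so $D^\perp$ meets $\Pos(X)$, this segment must exit $\BAmp(X)$ through a boundary face of $\overline{\BAmp(X)}=\Movbar(X)$. After applying a suitable element of $W_X$ (replacing $D$ by $w(D)$ and the model accordingly, using that $W_X$ acts on prime exceptional classes by Theorem~\ref{thm:semidirect}(3) and that the exceptional chambers tile $\Pos(X)$ by Theorem~\ref{thm:semidirect}(2)) we may assume this face lies in the boundary of $\FE X$ itself. (iii) The boundary faces of $\FE X = \Movbar(X)$ inside $\Pos(X)$ are of two types: the prime-exceptional walls $[E]^\perp$, and the faces that come from contractions of birational models — these are the ``flopping'' or fibration walls. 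Since $w(D)^\perp$ still avoids $\BAmp(X)$ by wallness, $w(D)^\perp$ cannot be transverse to the interior of $\FE X$; combined with Lemma~\ref{lem:maxfaces} (the exceptional walls are top-dimensional and indispensable) and the wall-chamber structure, $w(D)^\perp$ must support a top-dimensional face of $\Movbar(X)$ of the second type, hence by the Cone Theorem \cite[Theorem~3.7]{KM98} (applied to a $\Q$-factorial terminal model $X'$) it is dual to an extremal curve $C'$ on $X'$ spanning a $K_{X'}$-trivial extremal ray. (iv) Finally, pass to a relative Picard rank one contraction of that ray on $X'$ and invoke Corollary~\ref{corollary curve deforms} to conclude that $C'$ deforms over all of its Hodge locus; adjust $\varphi\in\Mth(X)$ to be the composite of $w$ with $f_*$ for the birational map $f:X\ratl X'$, which is a monodromy operator by Theorem~\ref{theorem birational monodromy}.

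The main obstacle I expect is step (iii): distinguishing a priori a prime-exceptional wall from a contraction wall, and making sure that after the $W_X$-reduction the hyperplane $w(D)^\perp$ genuinely supports a \emph{maximal} face of the movable cone rather than meeting it in higher codimension. This is where one must use the fineness of the Vinberg--Shvartsman chamber decomposition together with Lemma~\ref{lem:maxfaces}, and the fact (Lemma~\ref{lem:mov-ns}) that $\Movbar(X)$ restricted to $\NS(X)_\R$ remains a fundamental domain for $W_X$ so that the combinatorics of walls in $\Pic(X)_\R$ faithfully reflects those in $\Pos(X)$. Once the face is identified as top-dimensional and non-exceptional, the contraction and the deformation of the curve are automatic from the Cone Theorem and Corollary~\ref{corollary curve deforms}; the duality statement $\varphi(D) = [C']^\vee$ up to scaling then follows from Remark~\ref{remark dual curve} applied on $X'$, together with the relative Picard rank one hypothesis pinning down the ray.
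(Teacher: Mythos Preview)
Your overall strategy---use $W_X$ to bring $D^\perp$ into $\FEbar X$, then extract a contraction on a birational model---is the paper's strategy, but there are two genuine gaps.

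First, in step (iii) you misidentify the wall geometry. The boundary of $\FE X=\Mov(X)^\circ$ inside $\Pos(X)$ consists \emph{only} of the prime-exceptional hyperplanes $[E]^\perp$; this is the definition of $\FE X$. The ``flopping'' walls separating the chambers $f^*\Amp(X')$ lie in the \emph{interior} of $\Movbar(X)$, not on its boundary. So your dichotomy ``the boundary faces of $\FE X$ are of two types'' is set up incorrectly, and the claim that $w(D)^\perp$ ``cannot be transverse to the interior of $\FE X$'' is false: when $D$ is dual to a flopping curve, $w(D)^\perp$ does slice through $\FE X$, it just lies inside the complement $\FE X\setminus\BAmp(X)$. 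There is also no reason to exclude the prime-exceptional case, since a prime exceptional divisor is itself a wall divisor. The correct (and much shorter) observation is simply that $\varphi(D)^\perp$ meets $\widebar{\BAmp(X)}\cap\Pos(X)$ while missing $\BAmp(X)$; hence it meets $f^*\Nef(X')\setminus f^*\Amp(X')$ for some model $X'$, producing a big nef non-ample class $H'$ on $X'$.

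Second, the Cone Theorem \cite[Theorem~3.7]{KM98} gives you nothing here: since $K_{X'}$ is numerically trivial there are no $K_{X'}$-negative extremal rays, and that theorem is silent about $K_{X'}$-trivial ones. What is needed (and what the paper invokes) is Kawamata's base-point-free theorem: $H'$ is nef and $H'-K_{X'}=H'$ is nef and big, so $H'$ is semi-ample and defines a contraction $X'\to Z$. Choosing $H'$ general on $\varphi(D)^\perp\cap f^*\Nef(X')$ makes this contraction of relative Picard rank one, so the contracted curve $C'$ satisfies $[C']^\vee\in\Q\,\varphi(D)$, and Corollary~\ref{corollary curve deforms} finishes. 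Once you replace the Cone Theorem by base-point-free and drop the wall-type analysis and Lemma~\ref{lem:maxfaces} (which plays no role here), your argument collapses to the paper's short proof.
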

\begin{proof}
Because $q_X(D)<0$, the hyperplane $D^\perp$ intersects the positive cone. By Theorem~\ref{thm:semidirect} (which requires the hypothesis $b_2(X)\geq 5$), there is a composition of reflections $\varphi$ such that $\varphi(D)^\perp$ intersects the closure of the fundamental exceptional chamber non-trivially. Therefore it follows that, for some $\varphi\in\Mth(X)$, $\varphi(D)^\perp$ does not intersect $\BAmp(X)$, but intersects $\widebar{\BAmp}(X)\cap \Pos(X)$. We deduce the existence of a birational model $X'$ of $X$ such that $\varphi(D)$ is orthogonal to a nef divisor $H'$. By applying the base-point-free theorem to $(X', H')$, we contract a curve $C'$ which is dual to $\varphi(D)$. The statement about deformations of $C'$ follows from Corollary~\ref{corollary curve deforms}.
\end{proof}

\begin{proposition}\label{prop:walls_deform}
Let $X$ be a projective primitive symplectic variety with $\Q$-factorial and terminal singularities and let $D$ be a wall divisor on $X$. Let $Y$ be another projective primitive symplectic variety such that there exists a parallel transport operator $\varphi$ from $X$ to $Y$ such that $\varphi(D)\in \Pic(Y)$. Then $\varphi(D)$ is a wall divisor.
\end{proposition}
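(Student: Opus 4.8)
The plan is to use Lemma~\ref{lem:wall_are_extremal} to reduce to the case where $D$ is the dual class of an extremal curve that deforms over its Hodge locus, and then to invoke the Proposition preceding Lemma~\ref{lem:wall_are_extremal}, which manufactures wall divisors precisely out of such curves on projective deformations inside the appropriate Hodge locus.

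The negativity condition is immediate: a locally trivial parallel transport operator is an isometry for the BBF form, so $q_Y(\psi(D))=q_X(D)<0$. For the wallness condition I would first note that the data $(X,D,\psi)$ may be modified without changing what is to be proved. If $\varphi_0\in\Mth(X)$, then $\varphi_0(D)$ is again a wall divisor (its square is still negative and $\varphi\mapsto\varphi\varphi_0$ permutes $\Mth(X)$) while $\psi\circ\varphi_0^{-1}$ is a parallel transport operator carrying $\varphi_0(D)$ to $\psi(D)$. Likewise, if $f:X\ratl X'$ is a birational map to a projective primitive symplectic variety with $\Q$-factorial terminal singularities, then $f_*$ is a parallel transport operator (by \cite[Theorem~6.16]{BL18}, as in the proof of Theorem~\ref{theorem birational monodromy}) and $\psi\circ f_*^{-1}$ is a parallel transport operator from $H^2(X',\Z)$ to $H^2(Y,\Z)$ carrying $f_*(D)$ to $\psi(D)$. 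Applying Lemma~\ref{lem:wall_are_extremal} and replacing $X$, $D$ and $\psi$ accordingly, I may therefore assume that $D$ is a positive multiple of $[C]^\vee$ for a curve $C\subset X$ spanning an extremal ray; the contraction of that ray is a birational contraction $\pi:X\to Z$ of relative Picard number one with contracted curve $C$, and by Corollary~\ref{corollary curve deforms} the curve $C$ deforms over all of its Hodge locus.

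Next I would show that $Y$, equipped with a marking compatible with $\psi$, lies in the same connected component as $X$ of the sublocus of $\mathfrak M^{\lt}_\Lambda$ along which the class of $[C]^\vee$ remains of type $(1,1)$, and that $\psi$ itself is realized by a path from $X$ to $Y$ running inside this sublocus. That $X$ and $Y$ lie in the same component of $\mathfrak M^{\lt}_\Lambda$ is built into the hypothesis that $\psi$ is a parallel transport operator. For the refinement to the Hodge locus I would use the Hodge-theoretic global Torelli theorem \cite[Theorem~1.1]{BL18} together with the non-separatedness of the moduli space \cite[Theorem~6.16]{BL18}: inside the Hodge locus one connects both $X$ and $Y$ to points lying over a very general period in the connected period subdomain cut out by $[C]^\vee$, where, using the hypothesis $b_2(X)\geq 5$, the Picard rank equals one; two such points have the same period and lie in the same component of moduli, hence are bimeromorphic by \cite[Theorem~1.1]{BL18}, hence inseparable by \cite[Theorem~6.16]{BL18}. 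Once $\psi$ is realized by a path inside the Hodge locus, propagating $\pi$ and $C$ through the smooth connected local Hodge loci in the successive Kuranishi charts along that path (using Proposition~\ref{prop defo} and Corollary~\ref{corollary curve deforms} at each step) produces a birational contraction of $Y$ of relative Picard number one contracting a curve whose dual class is the parallel transport of $[C]^\vee$ along that path, namely $\psi(D)$; by the Proposition preceding Lemma~\ref{lem:wall_are_extremal}, this class is a wall divisor on $Y$.

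The step I expect to be the main obstacle is the one just described: arranging that the given parallel transport operator $\psi$ is represented by a path along which the class of $D$ stays of type $(1,1)$, so that deforming $D$ inside its Hodge locus from $X$ to $Y$ lands exactly on $\psi(D)$ and not merely on some $\Mt(Y)$-translate of it. Everything else is formal once the geometric Proposition preceding Lemma~\ref{lem:wall_are_extremal} is available; it is precisely here that global Torelli and the inseparability of bimeromorphic models come in, and — beyond the use of Lemma~\ref{lem:wall_are_extremal} — this is the only place where $b_2(X)\geq 5$ is needed.
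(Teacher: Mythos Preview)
Your argument is correct and follows the paper's approach: both reduce via Lemma~\ref{lem:wall_are_extremal} to an extremal curve $C$ deforming over its Hodge locus, and both then use that the suitably marked $Y$ lies in this Hodge locus. Your stated worry dissolves once you work in the \emph{marked} moduli space: with marking $\phi$ on $X$ and $\phi\circ\psi^{-1}$ on $Y$, any path in $\mathfrak M^{\lt}_\Lambda$ between these two marked points realizes exactly $\psi$ as parallel transport, so your connectivity argument (via global Torelli and inseparability) already delivers a path in the Hodge locus representing $\psi$ on the nose. The paper is terser only in the endgame: rather than propagating the contraction and invoking the preceding Proposition, it observes directly that for every $\chi\in\Mth(Y)$ the class $\chi(\psi(D))$ is again, up to sign, dual to a deformation of $C$ and hence to an effective curve, which already rules out orthogonality to any ample class on a birational model.
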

\begin{proof}
Up to changing the birational model and acting with $\Mth(X)$, by Lemma \ref{lem:wall_are_extremal} we can assume without loss of generality that $D$ is dual (up to a sign) to an extremal curve $C$  which deforms along its Hodge locus. Therefore, $\varphi(D)$ is dual (up to a sign) to an effective curve on $Y$. Let $\psi$ be any element in $\Mth(X)$: we have that $\psi(\varphi(C))$ is still in the Hodge locus of $C$, hence also $\psi(\varphi(D))$ is dual to (up to sign) an effective curve, and our claim follows. 
\end{proof}

\begin{remark}\label{rmk:wall_versus_kahler}
In the non projective setting, ample classes are replaced by K\"ahler classes. By the above proposition, all parallel transport deformation of a wall divisor are dual to an effective curve (up to sign), hence they cannot be orthogonal to a K\"ahler class.
\end{remark}

The first step towards the conjecture is proving that the deformation invariance of wall divisors implies their boundedness, and this follows closely a proof of Amerik and Verbitsky \cite{AV20} in the smooth case, thanks to their proof of the following technical result:

\begin{theorem}\cite[Theorem 1.2]{AV20}\label{thm:orbitdense}
Let $(L, (\cdot,\cdot))$ be a lattice of signature (3,m) with $m\geq 2$. Let $V=L\otimes \R$ and let $\Gamma$ be an arithmetic subgroup of $\O(L)$. Let $R\subset L$ be a $\Gamma$-invariant set of primitive negative vectors of $L$. Let $Gr_{+}(3,V)$ be the Grassmannian of 3-spaces $W$ for which the restriction $(\cdot,\cdot)_{|W}$ is positive definite  and let $R^\perp$ be the set of three spaces orthogonal to an element of $R$. Then, either $\Gamma$ acts on $R$ with finitely many orbits or $R^\perp$ is dense. 
\end{theorem}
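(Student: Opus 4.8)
The plan is to translate the statement into homogeneous dynamics and invoke the equidistribution theorems of Mozes--Shah and Eskin--Mozes--Shah (which ultimately rest on Ratner's measure--classification theorem). Write $G=\SO^+(V)$ and fix a maximal compact subgroup $K\subset G$, so that $\Gr_+(3,V)$ is identified with the Riemannian symmetric space $G/K$. Replacing $\Gamma$ by a finite-index subgroup (this affects neither the finiteness of the orbit set nor the density of $R^\perp$) we may assume $\Gamma\subset G$ is a lattice, arithmetic by Borel--Harish-Chandra. For a negative vector $v\in R$, the set of positive $3$-spaces $W$ with $W\perp v$ is precisely $\Gr_+(3,v^\perp)$; unwinding the identification, it is the orbit $G_v\cdot x_v\subset G/K$ of a single point $x_v$ under $G_v:=\Stab_G(v)^\circ\cong\SO^+(3,m-1)$, acting transitively there. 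Here the hypothesis $m\ge 2$ enters: it guarantees that $v^\perp$ has signature $(3,m-1)$ with $m-1\ge 1$, so that $G_v$ is a noncompact semisimple group generated by one-parameter unipotent subgroups. Since $v$ is rational, $G_v$ is defined over $\Q$, hence $\Gamma\cap G_v$ is a lattice in $G_v$ and the orbit $\Gamma\backslash\Gamma G_v$ in $\Gamma\backslash G$ is closed of finite volume, carrying a unique $G_v$-invariant probability measure $\mu_v$.

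Assume now, for contradiction, that $\Gamma$ has infinitely many orbits on $R$; I want to deduce that $R^\perp=\bigcup_{v\in R}\Gr_+(3,v^\perp)$ is dense in $G/K$. By the standard duality between $\Gamma$-orbits of these totally geodesic submanifolds in $G/K$ and closed $G_v$-orbits in $\Gamma\backslash G$, this reduces to showing that the union of the closed orbits $O_v:=\Gamma\backslash\Gamma G_v$, taken over orbit representatives $v$, is dense in $\Gamma\backslash G$. First I would recall that for each $d\neq 0$ the arithmetic group $\O(L)$ has only finitely many orbits on $\{v\in L:(v,v)=d\}$ (reduction theory, or Borel--Harish-Chandra applied to the affine quadric $(v,v)=d$); since $\Gamma$ has finite index in $\O(L)$, the same holds for $\Gamma$. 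Hence infinitely many $\Gamma$-orbits on $R$ forces $|(v,v)|$ to be unbounded on $R$. A covolume computation then gives $\operatorname{vol}(\mu_v)\to\infty$ as $|(v,v)|\to\infty$ (the discriminant of $v^\perp$, and thus the covolume of $\Gamma\cap G_v$, grows with $|(v,v)|$); so along our sequence we obtain closed orbits $O_i$ of the subgroups $G_{v_i}$, which are all conjugate over $\R$ to the fixed group $\SO^+(3,m-1)$, with $\operatorname{vol}(O_i)\to\infty$.

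Now the dynamical input takes over. By the non-divergence results of Eskin--Mozes--Shah (the $G_{v_i}$ are semisimple without compact factors), the normalized measures $\bar\mu_i$ do not lose mass at infinity, so after passing to a subsequence $\bar\mu_i\to\mu$ weak-$\ast$ for a probability measure $\mu$ on $\Gamma\backslash G$. By the Mozes--Shah theorem $\mu$ is homogeneous: it is the invariant probability measure on a closed orbit of some closed connected subgroup $L$ which, after conjugating, contains $G_{v_i}$ for all large $i$. Since $\SO^+(3,m-1)$ is a maximal connected subgroup of $\SO^+(3,m)$ --- the point stabilizer of a non-isotropic vector in an orthogonal group is maximal --- we must have $L=G_{v_i}$ or $L=G$. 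The former is impossible, since then $\mu$ would be supported on a single closed orbit of bounded volume, contradicting $\operatorname{vol}(O_i)\to\infty$. Hence $L=G$, i.e.\ $\mu$ is the Haar probability measure and the $O_i$ equidistribute; in particular $\bigcup_i O_i$ is dense in $\Gamma\backslash G$. Undoing the duality, $R^\perp$ is dense in $\Gr_+(3,V)$, as claimed.

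I expect the genuinely delicate points to be: pinning down the precise dictionary between $\Gamma$-orbits on $R$ and closed orbits in $\Gamma\backslash G$ (including that the submanifolds $\Gr_+(3,v^\perp)$ really are orbits of arithmetic subgroups of $G_v$, and the bookkeeping of real-conjugacy classes), the covolume estimate $\operatorname{vol}(\mu_v)\to\infty$, and checking all hypotheses of the Mozes--Shah / Eskin--Mozes--Shah machinery --- non-divergence of mass, and the maximality of $\SO^+(3,m-1)$ in $\SO^+(3,m)$, which is exactly what rules out an intermediate limiting measure. By comparison the soft reduction to homogeneous dynamics is routine.
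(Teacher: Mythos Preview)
The paper does not contain a proof of this statement. It is quoted verbatim from \cite[Theorem~1.2]{AV20} and used as a black box in the proof of Proposition~\ref{prop:bounded_wall}; there is no proof environment following the theorem in the paper. So there is nothing in the present paper to compare your proposal against.

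For what it is worth, your outline is essentially the argument Amerik--Verbitsky give in \cite{AV20}: reduce to homogeneous dynamics on $\Gamma\backslash G$ with $G=\SO^+(V)$, identify $v^\perp$-loci with closed orbits of the rational subgroups $G_v\cong\SO^+(3,m-1)$, use finiteness of $\Gamma$-orbits on each level set $\{(v,v)=d\}$ to force $|(v,v)|\to\infty$ and hence covolumes $\to\infty$, and then invoke Mozes--Shah / Eskin--Mozes--Shah together with the maximality of $\SO^+(3,m-1)\subset\SO^+(3,m)$ to conclude that the limiting measure is Haar. One technical point you pass over: Mozes--Shah is formulated for a \emph{fixed} acting subgroup, whereas here the $G_{v_i}$ vary. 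The standard remedy (also used in \cite{AV20}) is to conjugate each $G_{v_i}$ to a fixed copy $H\cong\SO^+(3,m-1)$ and study the sequence of $H$-orbits through varying basepoints $g_i\in\Gamma\backslash G$; the non-divergence and limit-measure statements then apply directly. The maximality claim is correct: the quotient $\mathfrak{so}(3,m)/\mathfrak{so}(3,m-1)$ is the standard representation of $\SO(3,m-1)$, which is irreducible, so there is no intermediate connected subgroup.
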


\begin{proposition}\label{prop:bounded_wall}
Let $X$ be a projective primitive symplectic variety with $\Q$-factorial and terminal singularities and $b_2(X)\geq 5$. Then there is a $B>0$, depending only on the locally trivial deformation class of $X$, such that 
$$q(D)\geq -B,$$
for all primitive wall divisors $D$ on $X$.
\end{proposition}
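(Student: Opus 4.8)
The plan is to transport to our setting the argument of Amerik and Verbitsky in \cite{AV20}, whose key ingredient is the dichotomy of Theorem~\ref{thm:orbitdense}. Fix a marking $\varphi\colon H^2(X,\Z)_\tf\xrightarrow{\sim}\Lambda$ and set $\Gamma:=\varphi\,\Mt(X)\,\varphi^{-1}\subset\O(\Lambda,q_X)$; by \cite[Theorem~8.2]{BL18} this is a finite-index, hence arithmetic, subgroup (here $b_2(X)\ge 5$, in particular $b_2(X)\ne 4$), and by Proposition~\ref{proposition bbf form} the lattice $(\Lambda,q_X)$ has signature $(3,b_2(X)-3)$ with $b_2(X)-3\ge 2$, so Theorem~\ref{thm:orbitdense} applies. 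Let $R\subset\Lambda$ be the set of primitive classes of negative square that arise, via some marking, as the class of a wall divisor on some projective primitive symplectic variety with $\Q$-factorial terminal singularities lying in the locally trivial deformation class of $X$. By the deformation invariance of wall divisors (Proposition~\ref{prop:walls_deform}), together with the fact that every element of $\Gamma$ is induced by parallel transport between members of this family, the set $R$ is $\Gamma$-invariant; by construction it consists of primitive negative vectors. Since every primitive wall divisor on $X$ contributes its class to $R$ and $\Gamma$ preserves $q_X$, it suffices to prove that $\Gamma$ acts on $R$ with \emph{finitely many orbits}: each orbit then has a single (negative) square, so $\{q_X(v):v\in R\}$ is finite, and $B:=-\min_{v\in R}q_X(v)$ depends only on the deformation class and bounds all primitive wall divisors as required.

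By Theorem~\ref{thm:orbitdense} applied to $\Lambda$, $\Gamma$ and $R$, either $\Gamma$ acts on $R$ with finitely many orbits, in which case we are done, or the set $R^\perp$ of positive $3$-spaces in $V:=\Lambda\otimes\R$ orthogonal to some element of $R$ is dense in the Grassmannian $\Gr_+(3,V)$ of positive $3$-spaces. The whole remaining task is to exclude the second alternative, and for this I will exhibit a nonempty \emph{open} subset $U\subset\Gr_+(3,V)$ disjoint from $R^\perp$. Let $U$ consist of the $3$-spaces $W_{Y,\omega}:=P_Y\oplus\R\,\omega$, where $Y$ runs over the members of the locally trivial deformation class of $X$, $P_Y\subset V$ is the positive $2$-plane spanned (after transport via the marking) by $H^{2,0}(Y)\oplus H^{0,2}(Y)$, and $\omega$ runs over the Kähler cone of $Y$ inside $H^{1,1}(Y)_\R=P_Y^{\perp}$; each such $W_{Y,\omega}$ is indeed a positive $3$-space. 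That $U$ is open follows, exactly as in \cite{AV20}, from the local Torelli theorem \cite[Corollary~5.8]{BL18} --- which makes the locally trivial period map a local isomorphism, so that positive $2$-planes near $P_Y$ are again periods --- combined with the openness and stability of the Kähler cone under small locally trivial deformations; the additional inputs needed in the singular case, namely density of projective members \cite[Corollary~6.10]{BL18} and topological triviality of locally trivial deformations \cite[Proposition~5.1]{AV21}, are available.

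It remains to check that $U\cap R^\perp=\emptyset$, and here the geometry of wall divisors enters. Suppose $W_{Y,\omega}$ is orthogonal to some $v\in R$. Then $v\perp P_Y$, so $v$ is of type $(1,1)$ on $Y$, and being an integral class it lies in $\Pic(Y)$. Since $v\in R$, the class $v$ is, up to the action of $\Gamma$, a parallel transport of a wall divisor; by Lemma~\ref{lem:wall_are_extremal} and Remark~\ref{rmk:wall_versus_kahler} (which use $b_2(X)\ge 5$), after possibly replacing $v$ by $-v$ the class $v$ is then dual to an effective curve on $Y$, so $v^\perp$ cannot contain a Kähler class of $Y$; in particular $v\not\perp\omega$, contradicting $W_{Y,\omega}\perp v$. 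Hence $R^\perp$ is not dense, the first alternative of Theorem~\ref{thm:orbitdense} holds, and the proof is complete.

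I expect the main obstacle to be precisely the exclusion of the density alternative, i.e.\ the construction of the open set $U$ and the verification that it avoids $R^\perp$: this is the heart of the Amerik--Verbitsky density argument, and the work consists in making sure its hyperbolic-geometric and deformation-theoretic ingredients --- local Torelli, openness and stability of the Kähler cone, density of projective members, and the deformation invariance of wall divisors established above --- transfer to projective primitive symplectic varieties with $\Q$-factorial terminal singularities.
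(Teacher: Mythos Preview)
Your proof is correct and follows essentially the same route as the paper: both apply Theorem~\ref{thm:orbitdense} to $\Gamma=\Mt(X)$ acting on the set of primitive wall-divisor classes, and exclude the density alternative by exhibiting the open set of positive $3$-spaces $\langle \Re\sigma_Y,\Im\sigma_Y,\omega\rangle$ coming from Kähler classes on locally trivial deformations, using Proposition~\ref{prop:walls_deform} and Remark~\ref{rmk:wall_versus_kahler} to see these $3$-spaces avoid $R^\perp$. One cosmetic point: the paper only claims this set \emph{contains} a nonempty open subset rather than being open outright (the Kähler cone can jump at special points), but this is all that is needed and your argument already provides it.
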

\begin{proof}
%As in the smooth case, we treat the cases $b_2<5$ and $b_2\geq 5$ separately. 
Let us suppose by contradiction that the square of primitive wall divisors is unbounded. This implies that there are infinitely many $\Mt(X)$-orbits of primitive wall divisors.
Set $L:= H^2(X,\Z)_\tf$. By  \cite[Theorem 8.2, item (1)]{BL18}, we may identify $\Mt(X)$ with a finite index subgroup of $\O(L)$. Notice that this requires $b_2(X)\geq 5$. 
Let us consider the Grassmannian $Gr_{+}(3,L\otimes\R)$ of positive 3 spaces inside $L\otimes \R$. By \cite[Proposition 5.15]{BL18}, to every K\"ahler class $\alpha$ on $X$ (or on a deformation of it), we can associate  a point inside $Gr_{+}(3,L\otimes\R)$ given by the 3-space $\Sigma_{X,\alpha}:=\langle \sigma_X+\widebar{\sigma_X},\sqrt{-1}(\widebar{\sigma_X}-\sigma_X),\alpha \rangle$, where $\sigma_X$ is the symplectic form on $X$. Let $\Omega\subset Gr_{+}(3,L\otimes\R)$ be the set of all $\Sigma_{Y,\beta}$ where $Y$ is a locally trivial deformation of $X$ and $\beta$ is a K\"ahler class on $Y$:
$$
\Omega:=\{\Sigma_{Y,\beta}\mid Y \sim_\lt X, \ \beta \textrm{ Kähler on }  Y\}.
$$
As for every deformation $Y$ of $X$ the K\"ahler cone is an open\footnote{This seems to be well-known, see e.g. \cite[Remark~2.6, item~(1)]{BL18}.} subset of $H^{1,1}(Y,\R)$, the set $\Omega$ contains an open  non-empty subset of $Gr_{+}(3,L\otimes\R)$. Let $D$ be a primitive wall divisor and let $M_D\subset Gr_{+}(3,L\otimes\R)$ be the set of three spaces orthogonal to an element in the $\Mt$-orbit of $D$. By Definition~\ref{definition wall divisor}, Proposition~\ref{prop:walls_deform} and Remark \ref{rmk:wall_versus_kahler}, we have 
\begin{equation}\label{eq:empty}
    \Omega \cap M_D=\emptyset.
\end{equation}
Let $M:=\cup_{D} M_D$, where the union runs over all primitive wall divisors on any projective locally trivial deformation of $X$. By applying Theorem \ref{thm:orbitdense} to $\Gamma=\Mt(X)$, $R$ the set of primitive wall divisors on some deformation of $X$, and $R^\perp=M$, we deduce that $M$ must be dense. However, by (\ref{eq:empty}), its complement contains $\Omega$, which in turn contains an open non-empty subset and we reach a contradiction. Therefore, there is a finite number of $\Mt(X)$-orbits of primitive wall divisors and their square is bounded by a rational constant.  
\end{proof}
%The above proposition will enable us to prove the %cone conjecture when $b_2\geq 5$, however we need a %more specific result when $b_2=4$, which is a %generalization of a result of Oguiso in the smooth %case \cite[Theorem 1.3]{Og14}:
%\begin{proposition}\label{prop:oguiso_singular}\marg%incom{Gio: this should fix our $b_2$ issues!}
%Let $X$ be a projective primitive symplectic variety %with terminal singularities and Picard rank two. %Then the following hold:
%\begin{enumerate}
 %   \item Either both boundary rays of %$\overline{\Amp(X)}$ are rational and $\Aut(X)$ is
 %a finite group, or both boundary rays are irrational, $\Amp(X)=\Mov(X)$ and $\Aut(X)$ is infinite.
    %\item Either both boundary rays of $\overline{\Mov(X)}$ are rational and $\Bir(X)$ is a finite group, or both boundary rays are %irrational and $\Bir(X)$ is infinite.
%\end{enumerate}

%\end{proposition}
%\begin{proof}
%As $X$ has Picard rank two, the boundary rays of $\overline{\Amp(X)}$ and $\overline{\Mov(X)}$ are each two half lines. 
%We start from the second item: let $m_i=\mathbb{R}^+x_i$, $i=1,2$ be the two boundary rays of $\overline{\Mov(X)}$ and let $\Pi$ be %the rational polyhedral cone given by Theorem \ref{thm:mov-conj} for $\widebar{\Mov(X)}_{\mathbb Q}$. Suppose the class $x_1$ is %defined over $\mathbb{Q}$. Then there exists $\varphi\in\Bir(X)$ and $\alpha_1\in\Pi$ such that $\varphi(\alpha_1)=x_1$.

%\end{proof}

\begin{proof}[Proof of Theorem \ref{thm:cone}, item (1)]
Let $\Pi$ be a rational polyhedral cone as in Theorem \ref{thm:mov-conj} for $\widebar{\Mov(X)}_{\mathbb Q}$. %\margincom{GL: Theorem \ref{thm:mov-conj} needs the $\Q$-factoriality assumption for now, as well as Proposition \ref{prop:fe-in-bdp} or Thm \ref{thm:semidirect} used below. One way out would be to prove the Cone Conjecture under $\Q$-factoriality and deduce the more general case via passing to a $\Q$-factorialization. Gio: I've checked \ref{prop:fe-in-bdp}, it seems ok without $\Q$-factoriality as long as we use the "good" $\BAmp$. The only issue seems to lie in point (4) of \ref{thm:semidirect}, but it is actually due to \ref{thm:hodge-torelli} } Under the action of $\Bir(X)$, it generates the full $\widebar{\Mov(X)}_{\mathbb Q}$.
%\color{red} (Gian: For this we need $\widebar{\mathcal{FE}}_X=\widebar{\BAmp}(X)$, for which, by Proposition \ref{prop:fe-in-bdp} we need to check  
%${\BAmp}(X)\subset {\mathcal{FE}}_X$)
Let us consider the set 
$$\Sigma=\{ D\in \Pic(X)\,\mid D \text{ a primitive wall divisor}\}. $$
By Proposition \ref{prop:bounded_wall}, this set is contained inside
$$\Upsilon=\{ D\in\Pic(X):\ -B\leq D^2<0\}$$
for some $B>0$. Moreover, $\Sigma$ is invariant under the action of $\Mth(X)$ by definition.
By \cite[Proposition 3.4]{MY15}, elements in $\Upsilon$ whose orthogonal intersects $\Pi$ form a finite set, therefore the same holds for elements in $\Sigma$. Let $\Sigma_{\Pi}$ be this finite set:
\[
\Sigma_\Pi:=\{\alpha \in \Sigma \mid \alpha^\perp\cap \Pi \neq \emptyset\} 
\]
This gives us a finite subdivision 
\begin{equation}\label{eq:finitesub}
    \Pi = \cup_{i\in I} 
\Pi_i
\end{equation}
into closed rational polyhedral subcones of $\Pi$ cut out by elements in $\Sigma_\Pi$. These cones all have non-empty interior, as they are defined by a finite set of hyperplanes intersecting $\Pi$.
Let $\Pi_i$ be one such cone and let $g\in\Bir(X)$ be such that $g(\Pi_i)$ intersects $f^*(\Amp(Y))$ for some birational map $f:\,X\,\rightarrow\,Y$. 

We claim that 
\begin{equation}\label{eq:gPi}
g(\Pi_i)=f^*(\Nef(Y))\cap g(\Pi).
\end{equation}
Indeed, it suffices to prove the equality of the interiors: let us denote $\Pi^0$ and $\Pi_i^0$ the interiors of $\Pi$ and $\Pi_i$ respectively and let us consider the non-empty intersection 
$$\Lambda:= g(\Pi^0) \cap f^*(\Amp(Y)).$$ As both of these sets are convex, $\Lambda$ is connected. Furthermore, $\Lambda\subset g(\Pi_i)$ as it cannot intersect $D^\perp$ for any $D\in \Sigma_\Pi$. By Proposition~\ref{prop:fe-in-bdp}, the connected components of $\BAmp(X)\cap \Mov(X)=\BAmp(X)$ are all of the form $h^*(\Amp(Y'))$ for some $h\in\Bir(X)$, we also have $g(\Pi_i)\subset \Lambda$. 

Let us now fix a birational map $f:\,X\,\dashrightarrow\,Y$ and consider 
$$I_Y=\{i\in I,\text{such that }\exists\, g_i\in\Bir(X) \text{ with } g_i(\Pi_i)\subset f^*(\Nef(Y))\}.$$
This set is non-empty, as $\Pi$ is a fundamental domain for the $\Bir(X)$-action on $\widebar{\Mov}^+(X)$ and $f^*(\Nef(Y))$ is contained in it by definition.
Moreover, we claim that $I_Y$ is univoquely determined by the isomorphism class of $Y$, hence the notation is well defined.
Indeed, let $f$ and $f'$ be two birational maps from $X$ to $Y$, and let us suppose that $g_i(\Pi_i)\subset f^{*}(\Nef(Y))$ for a fixed $i$ and some $g_i\in\Bir(X)$. We need to prove that there exists $h_i\in\Bir(X)$ such that $h_i(\Pi_i)\subset f'^{*}(\Nef(Y))$. But this is easily verified with $h_i=(f'^{-1}\circ f)\circ g_i$.
Let us consider the set $I_X$ and let $i\in I_X$. The set 
$$\Bir_{i,X}:=\{g\in\Bir(X) \text{ such that } g(\Pi_i)\subset\Nef(X)\}
$$ verifies the following 
\begin{equation}\label{eq:aut-trans}
    \Aut(X)\cdot g = \Bir_{i,X},\ \forall g\in \Bir_{i,X}
\end{equation}
as for any two $g,h\in \Bir_{i,X}$ the pushforward $(g\circ h^{-1})_*$ sends an ample class to an ample class, hence $g\circ h^{-1}\in \Aut(X)$. 

Now notice that 
\begin{equation}
\Nef^+(X) \buildrel{(\ref{eq cones inclusions})}\over{\subset} \Movbar^+(X) \buildrel{Thm\  \ref{thm:mov-conj}}\over{\ \ =\ \ } \cup _{g\in \Bir (X)}\ g(\Pi)     
\buildrel{(\ref{eq:finitesub})}\over{=}
\cup _{g\in \Bir (X)}\ g(\Pi_i).
\end{equation}
Hence $\Nef^+(X)$ is equal to the union of the translates of the $\Pi_i$ intersecting its
interior, by (\ref{eq:gPi}), which are in turn unions of $\Aut(X)$-translates by (\ref{eq:aut-trans}). 

All in all we have checked that $\Nef^+(X)$ is a union of $\Aut(X)$-translates of the rational polyhedral cones $\Pi_i$, where $i$ is in the finite set $I_X$. 

%The final step of the proof is a lemma of hyperbolic geometry, which can either be found in the arXiv version of \cite[Lemma 2.2]{totaro} or in \cite[Application 4.14]{Loo2}, and only requires us to define the right cone:
Let $G$ be the image of $\Aut(X)$ inside $O(H^2(X,\mathbb{Z}))$ and let $y\in \Amp(X)\cap H^2(X,\mathbb{Q})$ be a rational ample class, such that its stabilizer in $G$ is trivial. Let us consider the following domain:
$$\mathcal{D}_y=\{  x\in\Nef(X), (x,y)\leq(x,g(y)),\,\,\forall\,g\in G\}. $$   
The conclusion now follows from a result hyperbolic geometry  which we recall for the reader's convenience.

\begin{lemma}[Lemma 2.8 in \cite{MY15}, Lemma 2.2 in \cite{totaro}, or  Application 4.14 in \cite{Loo2}]
Suppose we are given a finite set of rational polyhedral cones in $\Nef
(X)$,
such that $\Nef
(X)$ is the union of their $G$-translates. Let $y$ be a rational point with trivial stabilizer in $G$ lying in the
interior of one of these rational polyhedral cones. Then the Dirichlet domain $\mathcal D_y$ given above is rational polyhedral, it is contained in
$\Nef^+
(X)$, and $\Nef^+(X) = \cup_{g\in G}
g(\mathcal D_y)$.
\end{lemma}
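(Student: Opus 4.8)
The plan is to run the fundamental‑domain argument of Sterk \cite{sterk}, in the form abstracted by Looijenga (Application~4.14 of \cite{Loo2}) and reproved in \cite[Lemma~2.8]{MY15} and \cite[Lemma~2.2]{totaro}; the hypotheses of that argument are exactly what is available here. Indeed $\Pic(X)_\R$ with $q_X$ is a Lorentzian lattice of signature $(1,\rho(X)-1)$ (by \cite[Lemma~5.3]{BL18} and \cite[Theorem~2]{Sch20}), so $G\subset\O(\Pic(X))$ is a discrete group of isometries; $\Nef^+(X)$ is the convex hull of the rational points of $\Nef(X)$; $\Nef(X)$ is the union of finitely many $G$‑translates of rational polyhedral cones by the preceding part of the proof; and $y\in\Amp(X)$ is a rational class with trivial stabilizer in $G$. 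I would keep the notation
\[
\mathcal D_y=\{x\in\Nef(X):q_X(x,y)\le q_X(x,g(y))\ \text{for all}\ g\in G\}=\Nef(X)\cap\bigcap_{g\in G}\{x:q_X(x,g(y)-y)\ge 0\}.
\]

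First I would verify that $\mathcal D_y$ is a closed convex fundamental domain for the $G$‑action. For the orbit‑meeting property it is enough, since $\mathcal D_y$ is closed and $\Amp(X)$ is dense in $\Nef^+(X)$, to treat an ample class $x$: there $q_X(x,\cdot)$ is strictly positive on $\overline{\Pos(X)}\setminus\{0\}$, hence proper on $\overline{\Pos(X)}$, while $Gy$ is discrete in $\overline{\Pos(X)}$; so $q_X(x,\cdot)$ attains its minimum on the orbit at some $g(y)$, and then $g^{-1}(x)\in\mathcal D_y$. For disjointness of interiors I would use the usual Dirichlet computation: if $x\in\mathrm{int}\,\mathcal D_y$ and $g(x)\in\mathcal D_y$ with $g\ne\id$, then $g^{-1}(y)\ne y$ (trivial stabilizer), so $q_X(x,y)<q_X(x,g^{-1}(y))$, i.e.\ $q_X(g(x),g(y))<q_X(g(x),y)$, contradicting $g(x)\in\mathcal D_y$.

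Next I would show that $\mathcal D_y$ is rational polyhedral and contained in $\Nef^+(X)$, and that $\Nef^+(X)=\bigcup_{g\in G}g(\mathcal D_y)$. Since $G$ is a discrete group of isometries of the Lorentzian lattice, it acts properly discontinuously on the ample cone, so the family of Dirichlet walls $\{q_X(x,g(y)-y)=0\}$ is locally finite on $\Amp(X)$ and $\mathcal D_y$ is locally rational polyhedral; combining this with the finite rational polyhedral cover of $\Nef(X)$ one concludes that only finitely many of the defining inequalities are needed, so $\mathcal D_y$ is a genuine rational polyhedral cone, and in particular $\mathcal D_y\subset\Nef^+(X)$. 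Then $\bigcup_{g}g(\mathcal D_y)$ is a locally finite, hence closed, union of rational polyhedral cones inside $\Nef^+(X)$ that contains the dense subset $\Amp(X)$, so it equals $\Nef^+(X)$.

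The crux — and the only step I would not carry out by hand — is the finiteness assertion in the previous paragraph: bounding the Dirichlet walls $g(y)-y$ and ruling out pathological behaviour near the cuspidal (isotropic rational) boundary rays of $\Nef^+(X)$. This is precisely the content of Looijenga's Application~4.14 of \cite{Loo2}, whose input — Lorentzian signature, discreteness of $G$, the description of $\Nef^+(X)$ as the rational closure of $\Nef(X)$, and the finite rational polyhedral cover — we have in hand, and it is the reason Sterk's original $K3$ argument applies verbatim. So I would invoke it and leave the remaining steps, which are formal convex geometry, to the reader.
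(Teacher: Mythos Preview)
Your sketch is essentially correct, but note that the paper does not prove this lemma at all: it is stated with attribution to \cite[Lemma~2.8]{MY15}, \cite[Lemma~2.2]{totaro}, and \cite[Application~4.14]{Loo2}, and then immediately applied to conclude the proof of Theorem~\ref{thm:cone}. So there is nothing to compare against---the paper's ``proof'' is a citation, and your proposal already identifies and invokes the same references for the substantive finiteness step.
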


$\mathcal{D}_y$ is a fundamental rational polyhedral domain for the action of $\Aut(X)$, and the theorem follows.
\end{proof}

In particular, we have obtained the following.

\begin{proof}[Proof of Corollary \ref{cor:finite}]
Following the proof of  Theorem \ref{thm:cone}, item (1), every birational model $Y$ of $X$ determines univoquely a set $I_Y\subset I$. As $I$ is finite, it has a finite number of partitions, hence our claim.
\end{proof}
\begin{proof}[Proof of Corollary
\ref{cor:finite-cont}]
One argues as in the Introduction of \cite{totaro}, by using Theorem~\ref{thm:cone}, item~(1).
\end{proof}

\bibliography{literatur}
\bibliographystyle{alpha}

%-------------------------------------
\end{document}